\newtheorem{theorem}{Theorem}
\newtheorem{lemma}{Lemma}[section]
\newtheorem{proposition}[lemma]{Proposition}
\newtheorem{corollary}[lemma]{Corollary}
\theoremstyle{definition}
\newtheorem{definition}[lemma]{Definition}
\theoremstyle{remark}
\newtheorem{remark}[lemma]{Remark}
\newtheorem{example}[lemma]{Example}
\numberwithin{equation}{section}
\newcommand{\C}{\mathbb{C}}
\newcommand{\R}{\mathbb{R}}
\newcommand{\Q}{\mathbb{Q}}
\newcommand{\Z}{\mathbb{Z}}
\newcommand{\K}{\mathbb{K}}
\newcommand{\und}[1]{\underline{#1}}
\newcommand{\ov}[1]{\overline{#1}}
\newcommand{\mc}[1]{\mathcal{#1}}
\begin{document}

\title{Geometrical models for a class of reducible Pisot substitutions}
\author[B.~Loridant]{Benoit Loridant}
\address[B.~Loridant]{Lehrstuhl Mathematik und Statistik, Montanuniversit\"at Leoben, Franz Josef Stra{\ss}e 18, 8700 Leoben, Austria}
\email{benoit.loridant@unileoben.ac.at}
\author[M.~Minervino]{Milton Minervino}
\address[M.~Minervino]{Institut de Math\'ematiques de Marseille (UMR 7373), Campus de Luminy, Case 907,
13288 Marseille Cedex 9, France}
\email{milton.minervino@univ-amu.fr}
\subjclass[2010]{28A80, 52C23, 37B10}
\keywords{Substitutive systems, Rauzy fractals, tilings, stepped surfaces}


\thanks{This research is supported by the ANR/FWF project ``FAN -- Fractals and Numeration'' (ANR-12-IS01-0002, FWF grant I1136) of the French National Agency for Research (ANR) and the Austrian Science Fund (FWF) and by the JSPS/FWF project I3346 ``Topology of planar and higher dimensional self-replicating tiles'' of the Japan Society for the Promotion of Science (JSPS) and the FWF}
\date{\today}

\begin{abstract}
We set up a geometrical theory for the study of the dynamics of reducible Pisot substitutions. It is based on certain Rauzy fractals generated by duals of higher dimensional extensions of substitutions. We obtain under certain hypotheses geometric representations of stepped surfaces and related polygonal tilings, as well as self-replicating and periodic tilings made of Rauzy fractals. We apply our theory to one-parameter family of substitutions. For this family, we analyze and interpret in a new combinatorial way the codings of a domain exchange defined on the associated fractal domains. We deduce that the symbolic dynamical systems associated with this family of substitutions behave dynamically as first returns of toral translations.
\end{abstract}

\maketitle

\section{Introduction}

A {\it substitution} is a map that takes letters of some finite alphabet to finite words in that alphabet. It is {\it Pisot} if the dominant eigenvalue of its incidence matrix $M_\sigma$ is a Pisot number. The difference between {\it irreducible} and {\it reducible} is whether the characteristic polynomial of $M_\sigma$ is irreducible over $\mathbb{Q}$ or not. For more detailed definitions we refer to Section~\ref{prel}.

The {\it substitutive system} $(X_\sigma,S)$ is the action of the shift on the set of bi-infinite words all of whose finite words are factors of some iterations of the substitution on some letter. If the substitution is primitive then $(X_\sigma,S)$ is minimal and uniquely ergodic. One of the main aims is to understand the spectral behavior of such a system. The Pisot hypothesis guarantees the existence of a non-trivial Kronecker factor and is crucial in Rauzy's work \cite{Rau82}, where the geometrical theory for these systems based on {\it Rauzy fractals} was initiated. 
In this work it was shown that the substitutive system generated by the Tribonacci substitution is (measure-theoretically) a translation on a two-dimensional torus. The key point was to interpret the shift as a domain exchange on a compact self-similar domain of $\mathbb{C}$, later called Rauzy fractal. Furthermore, this fractal domain tiles $\mathbb{C}$ periodically. This geometrical construction was generalized in \cite{AI}, where it is shown that the substitutive system associated with any unimodular irreducible Pisot substitution satisfying a combinatorial hypothesis, called strong coincidence condition, is measurably conjugate to a domain exchange on the Rauzy fractal. 
The problem of understanding whether $(X_\sigma,S)$ has pure discrete spectrum, or equivalently if the Rauzy fractals tile periodically their representation space, is known as {\it Pisot conjecture} and is still open for irreducible Pisot substitutions (see \cite{ABBLS} for a survey and~\cite{Bar,Barge0000} for recent results).

\subsection*{The reducible settings}

The reducible framework is much more enigmatic and less studied in comparison with
the irreducible one.
One of the main reasons is the lack of some important tools and the existence of some significant differences.

A reducible Pisot substitution acts on an alphabet with $n$ symbols with $n$ greater than the degree $d$ of its Pisot number. The space $\mathbb{R}^n$ splits into an $M_\sigma$-invariant hyperbolic space, with a one-dimensional expanding and a $(d-1)$-dimensional contracting spaces, and a neutral space, that in this work we consider non-hyperbolic. One complication is to understand the influence of this neutral space in the dynamics and geometry of the substitution.

{\it Stepped surfaces} play an important role in the construction of tilings by Rauzy fractals. They were first defined in \cite{Rev} and used as arithmetic discrete models for hyperplanes e.g.~in \cite{IO:93,IO:94}. The concept of stepped surface plays a central role in \cite{AI} in the irreducible substitution context, where it is defined as the set of nearest colored integer points above the contracting space $\mathbb{K}_c$ of the substitution $\sigma$:
\begin{equation} \label{eq:oldstep}
\mathcal{S}=\{(\mathbf{x},a) \in \mathbb{Z}^n\times\mathcal{A} :  \pi_e(\mathbf{x}) \in [0,\pi_e(\mathbf{e}_a))  \}.  
\end{equation} 
This is a cut-and-project set, in particular a model set, with window in the expanding line. To any colored integer point one can associate a face of an hypercube of $\mathbb{R}^n$ of a certain type. The resulting union of faces provides a discrete approximation of the contracting space and is called a geometrical representation of the stepped surface. Its projection onto $\mathbb{K}_c$ is a polygonal tiling. If we replace these polygons by the Rauzy fractals we obtain generally a self-replicating multiple tiling. The main aim is to determine whether this collection forms a tiling. It was shown in \cite{IR06} that this is equivalent to having a periodic tiling by Rauzy fractals related to a domain exchange transformation and one related to a Markov partition of the toral automorphism $M_\sigma$.
Rauzy fractals are intimately connected with stepped surfaces since they are defined as attractors of a graph-directed iterated function system governed by the \emph{dual substitution} $\mathbf{E}_1^*(\sigma)$ and this dual substitution acts on elements of the stepped surface. 
Topological properties of Rauzy fractals generated by Pisot substitutions were studied in \cite{ST09}.

As pointed out in \cite{EIR}, the existence of a geometrical representation of a stepped surface in the reducible case is unclear. In this paper, the authors defined an abstract stepped surface similarly as in \eqref{eq:oldstep} as set of nearest colored points and they showed its invariance under the dual substitution $\mathbf{E}_1^*(\sigma)$. However no concrete geometrical realization was given.
An ad hoc construction for a geometrical stepped surface was given in~\cite{EI} for the reducible substitution associated with the minimal Pisot number, also known as Hokkaido substitution.
 
In \cite{EIR} a self-replicating collection made of Rauzy fractals and one related to the Markov partition for the toral automorphism of the substitution were studied. 
However, they showed that there exist reducible Rauzy fractals which cannot tile periodically. This is a significant difference with respect to the irreducible setting.  
For the Hokkaido substitution it was shown in \cite{EI} that an extended domain of the Rauzy fractal tiles indeed periodically. This can be explained with the results of \cite[Proposition~8.5]{BBK}: for a wide class of $\beta$-substitutions, the domain exchange on the Rauzy fractal is shown to be the first return of a minimal toral translation on it. The extended fundamental domain is obtained by taking into account the original Rauzy fractal together with the pieces prior to their first return. 

We mention that Rauzy fractals have been also defined in \cite{RWY14} in terms of a dual iterated function system of an algebraic graph-directed iterated function system, which provides a unified and simple framework for Rauzy fractals.

Note that the Pisot conjecture is not true for reducible Pisot substitutions, see e.g.~the Thue-Morse substitution and \cite[Example~5.3]{BBK}. On the other hand, all Pisot $\beta$-substitutions have tiling dynamical systems with pure discrete spectrum \cite{Bar,Barge0000}. However this was proven for the $\mathbb{R}$-action of translation on the convex hull of the tiling of the line induced by the substitution. Our aim is to understand the $\mathbb{Z}$-action $(X_\sigma,S)$ and, as far as we know, very little is known in the reducible setting. Notice that in the irreducible setting pure discrete spectrum of the $\mathbb{Z}$-action is equivalent to pure discrete spectrum of the $\mathbb{R}$-action by a result of \cite{CS03}. However, this equivalence does not hold for reducible substitutions.

Remark that reducible non-unimodular Pisot substitutions raised particular interest since recently irreducibility has been criticized to be a natural assumption. Indeed one can take an irreducible Pisot substitution and rewrite it to obtain another substitution that is not irreducible but has topologically conjugate dynamics. In \cite{BBJS:12} a topological condition on the first rational C\v{e}ch cohomology of the tiling space of the substitution is introduced. Pisot substitutions satisfying this condition are called \emph{homological} and it is conjectured that their tiling spaces are $m$-to-$1$ extensions of their maximal equicontinuous factor (a torus or a solenoid), where $m$ is a divisor of the norm of the Pisot number.

\subsection*{Results of this paper}
In this paper we set up a geometrical theory for the dynamics of reducible Pisot substitutions. We take inspiration from some ideas of \cite{AFHI} for the study of a free group automorphism associated with a complex Pisot root.
The main tools are the duals of higher dimensional extensions of substitutions (Section~\ref{sec:duals}), first introduced in \cite{SAI}.
Since we want to construct fractal tilings on the contracting space $\mathbb{K}_c\cong\mathbb{R}^{d-1}$ of the substitution, we want to work with $(d-1)$-dimensional faces in $\mathbb{R}^n$, thus it turns out that the dual substitution $\mathbf{E}^*_{n-d+1}(\sigma)$, and its concrete geometric realization $\mathbf{E}^{d-1}(\sigma)$, will be suitable for this task. Our Rauzy fractals will be defined as Hausdorff limits of renormalized patches of polygons generated by iterations of the dual $\mathbf{E}^{d-1}(\sigma)$. In the irreducible case these fractals coincide with the usual ones, thus our theory is more general.

We introduce some important geometrical and algebraic conditions in order to develop a tiling theory with these objects (see Section~\ref{sec:hyp} for precise definitions). We deal with {\it nice reducible substitutions}, a condition which implies that $\mathbf{E}^{d-1}(\sigma)$ induces an inflate-and-subdivide rule on $\mathbb{K}_c$, it is positive (as defined in \cite{AFHI}) and has exactly the Pisot number as inflation factor.  
The {\it geometric finiteness property} will ensure that the stepped surfaces cover the entire space $\mathbb{K}_c$. Finally we define a slight generalization of the {\it strong coincidence condition} to ensure that a domain exchange is well-defined on the subtiles of the Rauzy fractals.

Under these conditions, we solve some well-known problems for reducible Pisot substitution posed e.g.~in \cite{EIR}:
\begin{itemize}
\item We find geometrical representations for stepped surfaces and related polygonal tilings (Theorem~\ref{thm:poly} in Section~\ref{sec:step}).
\item We show that our Rauzy fractals form {\it aperiodic self-replicating tilings} by just replacing them in the polygonal tiling induced by a stepped surface (Theorem~\ref{thm:selfreptil} in Section~\ref{sec:rauzyfrac}). Examples can be found in Figures~\ref{fig:s1polandr} and~\ref{fig:s2polandr}.
\item We get natural {\it periodic tilings} by Rauzy fractals, which were missing in the previous works on reducible substitutions (Theorem~\ref{thm:pertil} in Section~\ref{sec:rauzyfrac}).
\end{itemize}
Our Rauzy fractals turn out to be exactly those extended domains considered in \cite{EI,BBK}, with the advantage that they are generated explicitly in a systematic way by the dual substitution $\mathbf{E}^{d-1}(\sigma)$. 
Throughout the paper, we apply our results to a one-parameter family of substitutions defined in Section~\ref{sec:main-ex}, which will constitute our main example. One substitution of this family (see Section~\ref{sec:hok}) was explicitly studied with the same approach in~\cite{EEFI}; a tiling result was obtained and mentioned to hold for the whole family. In Sections~\ref{sec:comb}, we will further relate different definitions for the Rauzy fractals. Indeed, 
Rauzy fractals can be defined also as the closure of the projection of vertices of the \emph{stepped line} representing geometrically the fixed point of the substitution. 
We give a new combinatorial approach which consists in applying a morphism, defined by taking into account the rational dependencies arising in the reducible case, to the stepped line. This morphism turns in some sense the stepped line into an irreducible one, considering only the letters associated with the rationally independent generators (cf.~\cite{Fret} for a similar approach in the framework of model sets for reducible substitutions).
Projecting the vertices of this modified stepped line onto $\mathbb{K}_c$ is an equivalent definition of the Rauzy fractal generated by $\mathbf{E}^{d-1}(\sigma)$. We investigate then in Section~\ref{sec:symbdyn} the codings of the domain exchange defined on our new Rauzy fractals. We conclude by showing that, for our family of substitution, $(X_\sigma,S)$ is measurably conjugate to the first return of a toral translation (Theorem~\ref{thm:conj}).
We end with Section~\ref{sec:persp} where some perspectives for future works are presented.

\begin{figure}[h]
\begin{center}$
\begin{array}{cc}
\includegraphics[scale=.35]{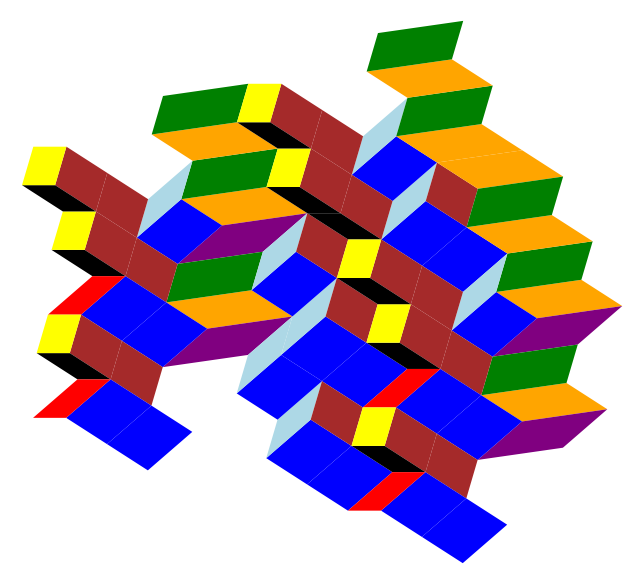} & \quad
\includegraphics[scale=.35]{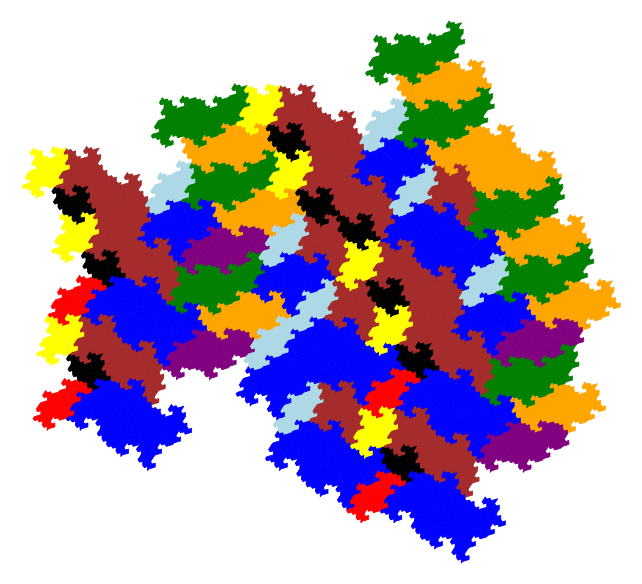}
\end{array}$
\end{center}
\caption{Polygonal and Rauzy fractals aperiodic tiling generated by $\mathbf{E}^2(\sigma_1)$ on $(\mathbf{0},1\wedge 5)+(\mathbf{0},4\wedge 5)+(\mathbf{0},1\wedge 4)$, with $\sigma_1$ defined in \eqref{fam}.}\label{fig:s1polandr}
\end{figure}

\begin{figure}[h]
\begin{center}$
\begin{array}{cc}
\includegraphics[scale=.32]{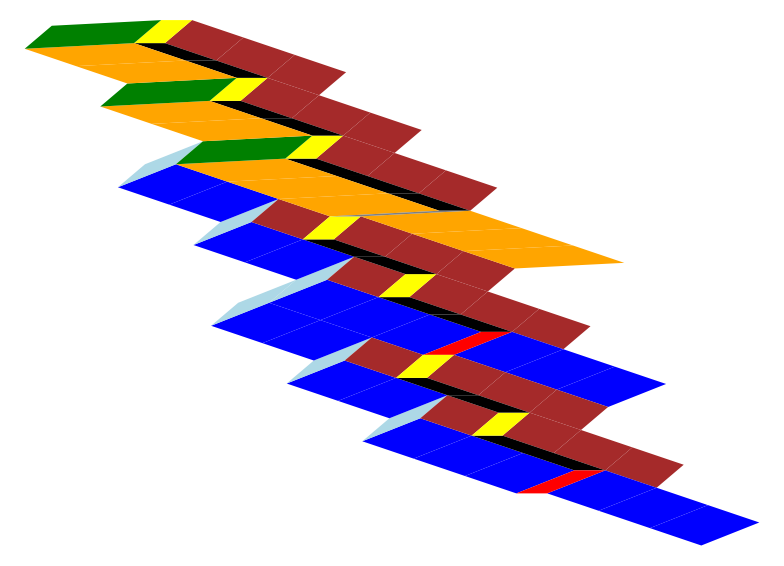} & \hspace{-.5cm}
\includegraphics[scale=.32]{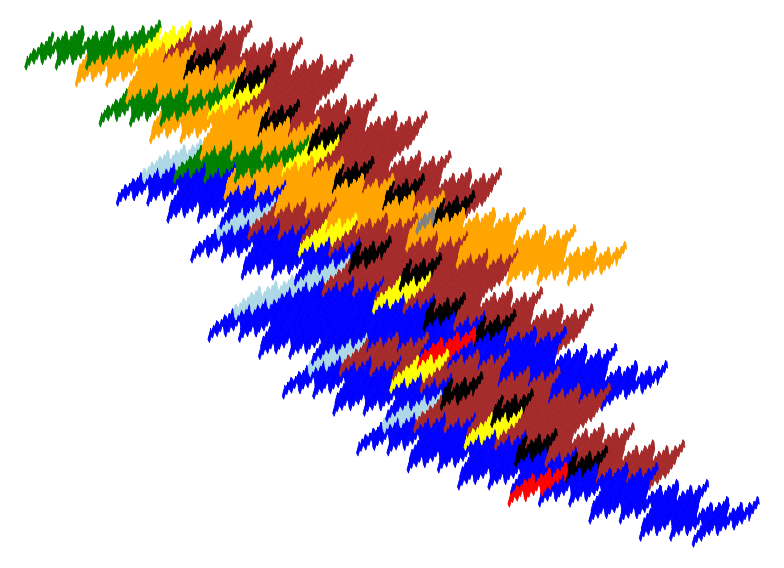}
\end{array}$
\end{center}
\caption{Polygonal and Rauzy fractals aperiodic tiling generated by $\mathbf{E}^2(\sigma_2)$ on $(\mathbf{0},3\wedge 4)+(\mathbf{0},3\wedge 5)+(\mathbf{0},4\wedge 5)$, with $\sigma_2$ defined in \eqref{fam}.}\label{fig:s2polandr}
\end{figure}

\section{Hokkaido substitution}\label{sec:hok}
Before dealing with the general case, we present here an overview of our constructions and results for a particular substitution,  namely, the well-known \emph{Hokkaido substitution}, associated with the minimal Pisot number:
$$\sigma_0:\;1\mapsto 12,\; 2\mapsto 3,\; 3\mapsto 4,\; 4\mapsto 5,\; 5\mapsto 1.
$$
 Precise definitions and statements will be given in the next sections.

The incidence matrix of $\sigma_0$ is 
$$\begin{array}{cc}
M_{\sigma_0}=\begin{pmatrix}
1&0&0&0&1\\
1&0&0&0&0\\
0&1&0&0&0\\
0&0&1&0&0\\
0&0&0&1&0
\end{pmatrix}
\end{array}
$$
and the associated polynomial is
\[ f(x)g(x) = (x^3-x-1)(x^2-x+1). \] 
The classical Rauzy fractal of the Hokkaido substitution is made of five subtiles obtained by projecting the vertices of the stepped line into the contracting space of the substitution $\mathbb{K}_c\cong\mathbb{R}^2$. Precisely, given a fixed point $u$ of $\sigma_0$,
\begin{equation}\label{eq:bro} 
\mc{R}(a) = \overline{\{\pi_c(\mathbf{l}(u_0\cdots u_{k-1})): k\in\mathbb{N}, u_k = a) \}},\quad a\in\mathcal{A}=\{1,\ldots,5\}.
\end{equation} 
Since the strong coincidence condition holds for $\sigma_0$, the domain exchange 
\begin{equation}\label{eq:classic-exchange}
E: \,\mathbf{x}\mapsto \mathbf{x}+\pi_c(\mathbf{e}_a),\quad \text{if } \mathbf{x}\in\mathcal{R}(a), 
\end{equation}
is well-defined almost everywhere. See the top of Figure~\ref{fig:dom2}.
For every unit Pisot substitution satisfying the strong coincidence condition, the substitutive system $(X_\sigma,S,\mu)$ is measurably conjugate to the domain exchange $(\mathcal{R},E,\lambda)$. See  \cite{AI,CS} for the details in the irreducible setting and \cite{EIR} for the reducible one.

As described in the introduction, the construction of stepped surfaces and the existence of periodic tilings for this substitution is not clear.
To solve these problems we consider, instead of the dual substitution $\mathbf{E}_1^*(\sigma_0)$, the higher-dimensional dual $\mathbf{E}_3^*(\sigma_0)$ and its geometric realization $\mathbf{E}^2(\sigma_0)$ (see Section~\ref{sec:duals} for precise definitions).
Indeed, it turns out that these are the right dual substitutions to consider if we want to construct stepped surfaces approximating the two-dimensional contracting plane $\mathbb{K}_c$ of $\sigma_0$. 

In fact, $\mathbf{E}^2(\sigma_0)$ acts on oriented two-dimensional faces in $\mathbb{R}^5$. These faces are represented with a wedge formalism and the substitution rule defined on them is

\begin{align*}
\mathbf{E}^2(\sigma_0): (\mathbf{0},4\wedge 5) &\mapsto (\mathbf{0},3\wedge 4)  \\ 
(\mathbf{0},3\wedge 5) &\mapsto (\mathbf{0},2\wedge 4)  \\  
(\mathbf{0},3\wedge 4) &\mapsto (\mathbf{0},2\wedge 3)  \\  
(\mathbf{0},2\wedge 5) &\mapsto (M_\sigma^{-1}\mathbf{e}_2, 4\wedge 5)  \cup (\mathbf{0},1\wedge 4)\\  
(\mathbf{0},2\wedge 4) &\mapsto (M_\sigma^{-1}\mathbf{e}_2,3\wedge 5) \cup (\mathbf{0},1\wedge 3) \\  
(\mathbf{0},2\wedge 3) &\mapsto(M_\sigma^{-1}\mathbf{e}_2,2\wedge 5)  \cup  (\mathbf{0},1\wedge 2)\\
(\mathbf{0},1\wedge 5) &\mapsto -(\mathbf{0},4\wedge 5)  \\ 
(\mathbf{0},1\wedge 4) &\mapsto -(\mathbf{0},3\wedge 5)  \\ 
(\mathbf{0},1\wedge 3) &\mapsto -(\mathbf{0},2\wedge 5)  \\ 
(\mathbf{0},1\wedge 2) &\mapsto -(\mathbf{0},1\wedge 5)  \\ 
\end{align*}

\begin{figure}[h]
\begin{center}$
\begin{array}{cc}
\includegraphics[scale=.14]{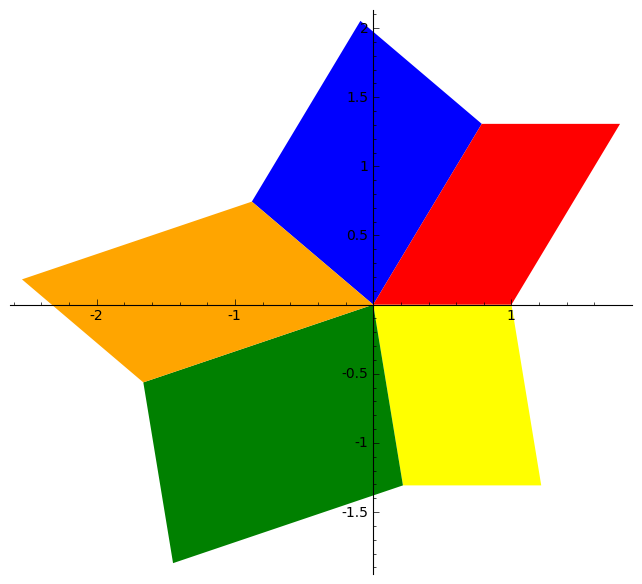} &
\includegraphics[scale=.14]{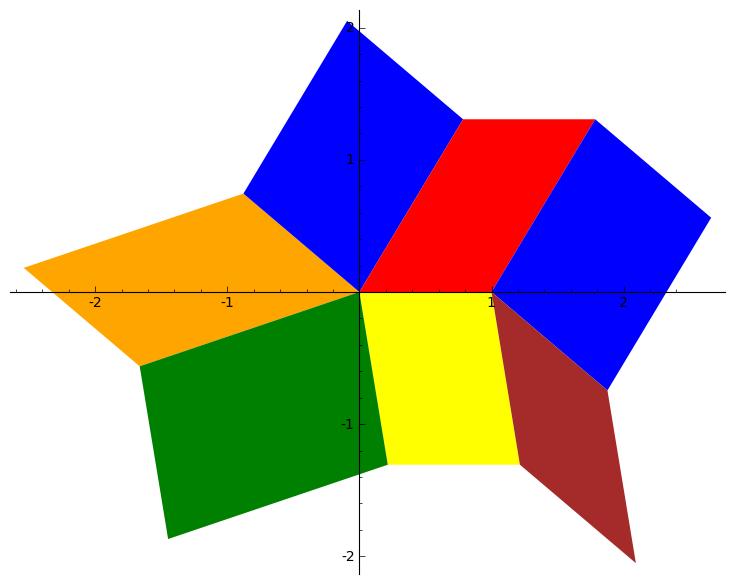} \\
\includegraphics[scale=.2]{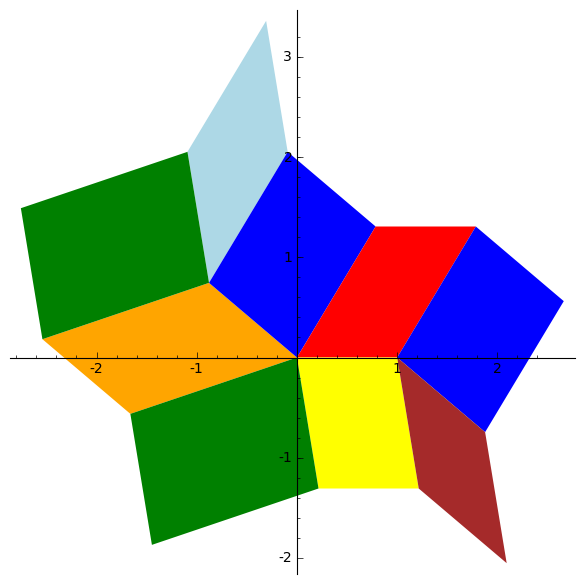} &
\includegraphics[scale=.23]{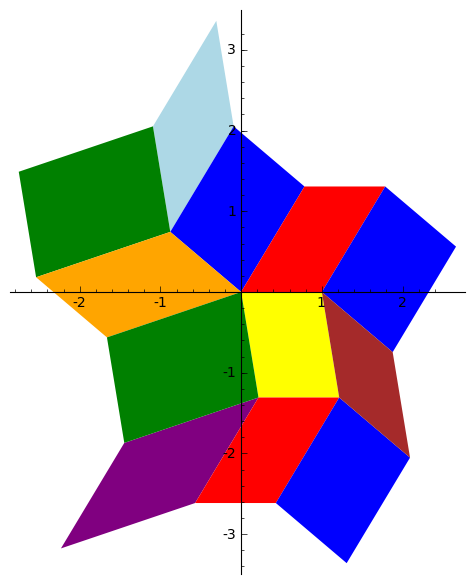} \\
\includegraphics[scale=.28]{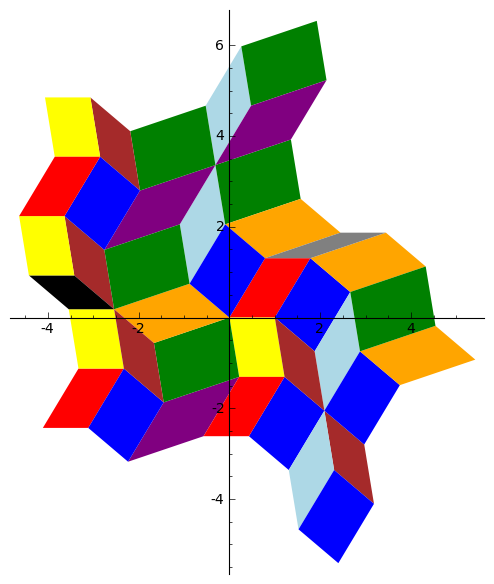} &
\includegraphics[scale=.32]{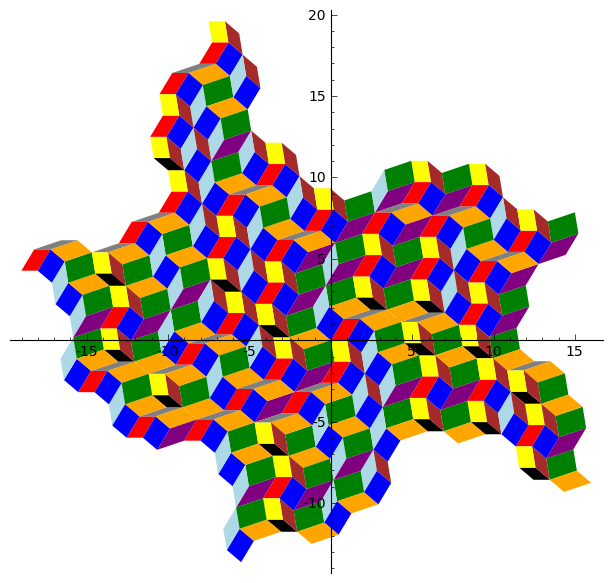}
\end{array}$
\end{center}
\caption{Iterates of $\mathbf{E}^2(\sigma_0)$ on $\mathcal{U}=\{(\mathbf{0},1\wedge 3),(\mathbf{0},1\wedge 4),(\mathbf{0},2\wedge 4),(\mathbf{0},2\wedge 5),(\mathbf{0},3\wedge 5)\}$. Faces of $\mathcal{U}$ are colored from left to right by red, yellow, green, orange and blue.\label{5patch}}
\end{figure}

We can show that $\sigma_0$ is a nice reducible substitution, that is, it satisfies all the necessary hypotheses described in Section~\ref{sec:hyp} such that $\mathbf{E}^2(\sigma_0)$ generates stepped surfaces approximating $\mathbb{K}_c$. This is illustrated in Figure~\ref{5patch}.

Iterates of $\mathbf{E}^2(\sigma_0)$ on initial sets of faces can be renormalized by the contracting action of $M_\sigma$ on $\mathbb{K}_c$. Taking the limit of this operation with respect to the Hausdorff metric allows us to define new Rauzy fractals subtiles (cf.~Definition \ref{def:rauzyfrac})
\[ \mathcal{R}(a\wedge b) = \lim_{k\to\infty} M_\sigma^k\,\pi_c(\overline{\mathbf{E}^2(\sigma_0)^k (\mathbf{0},a\wedge b)}),\quad \text{ for any } a\wedge b \in \wedge^2\mathcal{A}.
\]
Since iterating $\mathbf{E}^2(\sigma_0)$ on certain sets of faces covers $\mathbb{K}_c$, the geometric finiteness condition holds (see Definition~\ref{def:geomprop}), and we get aperiodic tilings by Rauzy fractals defined by the dual substitution $\mathbf{E}^2(\sigma_0)$ (see Theorem~\ref{thm:selfreptil}). This can be visualized in Figure~\ref{fig:s0tilings}.

\begin{figure}[h]
\begin{center}$
\begin{array}{cc}
\includegraphics[scale=.3]{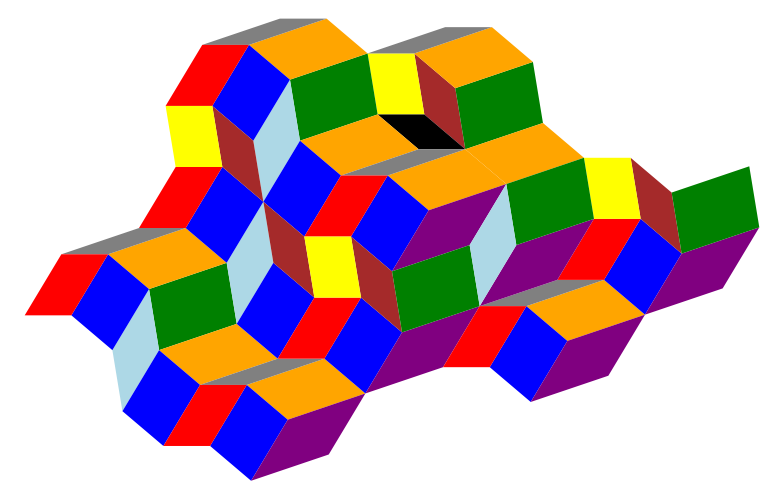} & \quad
\includegraphics[scale=.3]{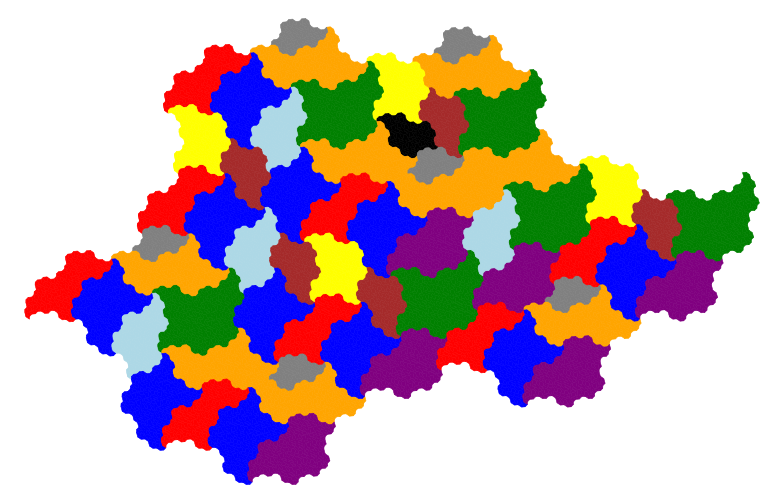}
\end{array}$
\end{center}
\caption{Polygonal and Rauzy fractals aperiodic tiling generated by $\mathbf{E}^2(\sigma_0)$ on $(\mathbf{0},2\wedge 3)+(\mathbf{0},2\wedge 4)+(\mathbf{0},3\wedge 4)$ for the Hokkaido substitution $\sigma_0$.}\label{fig:s0tilings}
\end{figure}

We can furthermore construct easily sets of two-dimensional faces which tile periodically $\mathbb{K}_c$. Applying the same renormalization process described above, we get periodic tilings by Rauzy fractals (see Figure~\ref{figper3} and cf.~Theorem~\ref{thm:pertil}).

After having defined a generalization of the strong coincidence condition (see Definition~\ref{def:scc}) and having verified that it holds for $\sigma_0$ (see Proposition~\ref{prop:scc}), a domain exchange can be defined on the Rauzy fractals $\mathcal{R}(a\wedge b)$. The tiling property implies that the considered Rauzy fractals are fundamental domains for the torus $\mathbb{T}^2$ and the domain exchange turns into a translation action on it. 

It is left to show the relations between the new Rauzy fractals $\mathcal{R}(a\wedge b)$ generated by the dual $\mathbf{E}^2(\sigma_0)$ and the $\mathcal{R}(a)$ classically defined by projection of stepped lines as in \eqref{eq:bro}. This is done in Section~\ref{sec:comb}. Indeed, we can modify the classical stepped line into another one on three letters. 
This is due to linear dependencies arising when projecting the stepped line along the neutral space and can be interpreted symbolically by applying the word morphism $\chi$ defined in \eqref{chi}. 
We can prove that the fractal domain obtained by projecting down this modified stepped line equals the Rauzy fractal generated by $\mathbf{E}^2(\sigma_0)$ on some initial set of faces (see Proposition~\ref{pro:iso}).
Using these identifications we can prove that the domain exchange $E$ on the classical Rauzy fractal $\mathcal{R}=\bigcup_{a\in\mathcal{A}}\mathcal{R}(a)$ is the first return of the toral translation given by the domain exchange on the Rauzy fractal $\mathcal{R}_\mathcal{P} = \bigcup_{a\wedge b\in \mathcal{P}} \mathcal{R}(a\wedge b)$ (see Proposition~\ref{pro:return} and Figure~\ref{fig:dom2}). 

It follows from Theorem~\ref{thm:conj} that the substitutive shift $(X_{\sigma_0},S)$ is the first return of a toral translation.

\begin{figure}
\begin{center}$
\begin{array}{cc}
\includegraphics[scale=.3]{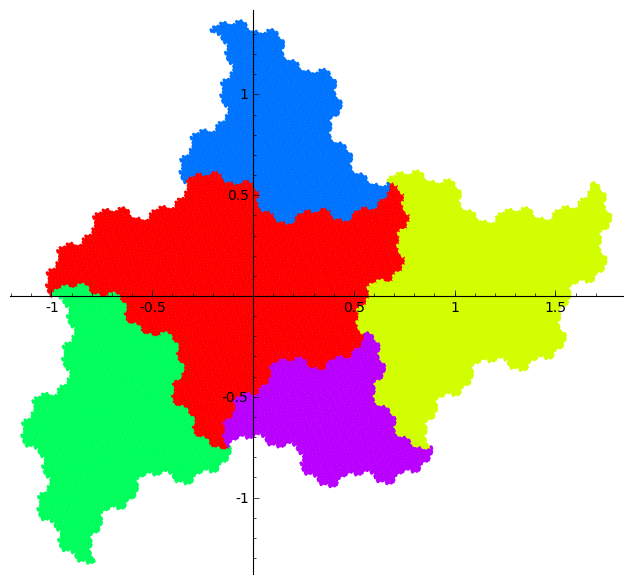} &
\includegraphics[scale=.3]{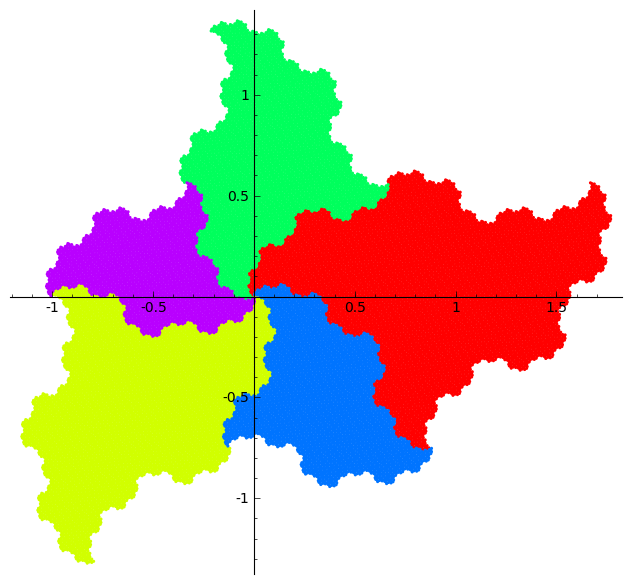} \\
\includegraphics[scale=.35]{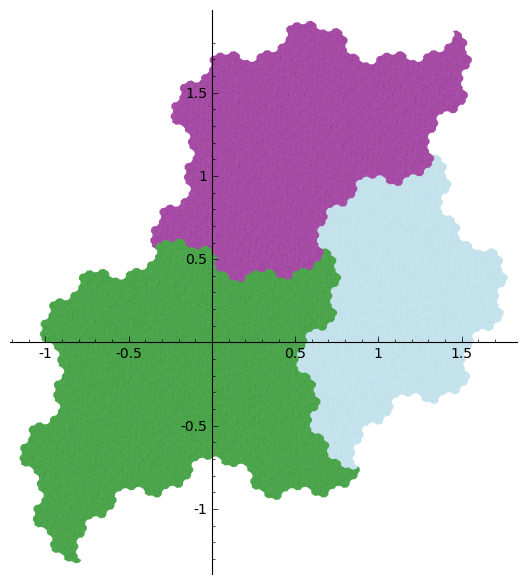} &
\includegraphics[scale=.35]{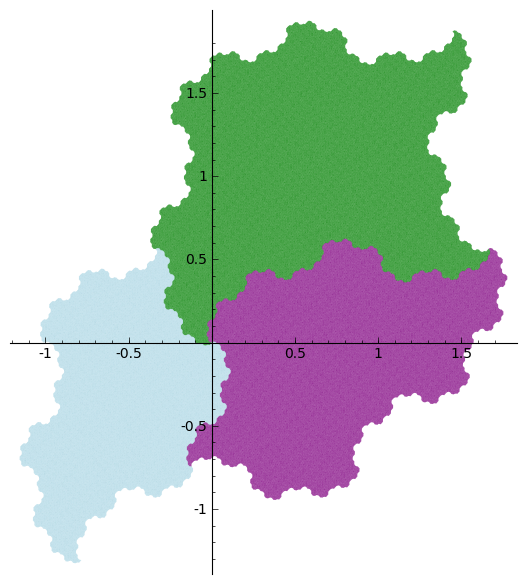}
\end{array}$
\end{center}
\caption{The domain exchanges $(\mathcal{R},E)$ and $(\widetilde{\mathcal{R}},\widetilde{E})$ for $\sigma_0$. One can see that $E$ is the first return of $\widetilde E$ on $\mathcal{R}$ (see Proposition~\ref{pro:return}): for example, the red subtile $\mathcal{R}(1)$ is inside the green $\widetilde{\mathcal{R}}(3)$; applying twice $\widetilde{E}$ on it gives $E(\mathcal{R}(1))$.\label{fig:dom2}}
\end{figure}

\section{General setting and main example}\label{sec:gen}

\subsection{Pisot substitutions}\label{prel}
Let $\mathcal{A}=\{1,2,\ldots,n\}$ be a finite alphabet. A \emph{substitution} $\sigma$ is an endomorphism of the free monoid $\mathcal{A}^*$ sending non-empty words to non-empty words. Its \emph{incidence matrix} $M_\sigma$ is defined by $(M_\sigma)_{b,a} = |\sigma(a)|_b$, for $a,b\in\mathcal{A}$, where $|w|_b$ is the number of occurrences of the letter $b$ in $w\in\mathcal{A}^*$. Denote by $\mathbf{l}:\mathcal{A}^*\to\mathbb{R}^n$, $w_0\cdots w_k\mapsto \mathbf{e}_{w_0}+\cdots+\mathbf{e}_{w_k}$ the \emph{abelianization map}. Then $\mathbf{l}\circ\sigma = M_\sigma\circ\mathbf{l}$.
 We say that a substitution $\sigma$ is \emph{primitive} if $M_\sigma$ is primitive. By the Perron-Frobenius theorem a primitive matrix has a dominant simple eigenvalue. 
A \emph{Pisot substitution} is a substitution such that the dominant eigenvalue of $M_\sigma$ is a \emph{Pisot number}, i.e.~an algebraic integer $\beta > 1$ such that its Galois conjugates $\beta'$ other than $\beta$ itself satisfy $|\beta'| < 1$. We say that a Pisot substitution is {\it unit} if the Pisot number $\beta$ is a unit, that is, its norm $N(\beta) = \pm 1$.
Given a Pisot substitution $\sigma$, suppose that the characteristic polynomial of $M_\sigma$ decomposes over $\Q$ into irreducible factors as 
\[ \det(xI-M_\sigma) = f(x) g_1(x)^{m_1}\cdots g_k(x)^{m_k},   \]
where $f(x)$ is the minimal polynomial of the Pisot root $\beta$. We call $f(x)$ the \emph{Pisot polynomial} and $g(x):= g_1(x)^{m_1}\cdots g_k(x)^{m_k}$ the \emph{neutral polynomial} associated to $\sigma$. If $g(x)=1$ we say that the substitution $\sigma$ is \emph{irreducible}, otherwise we call it \emph{reducible}.
An irreducible Pisot substitution is primitive (see \cite[Proposition~1.3]{CS}), but this is not true in general for reducible Pisot substitutions. 

The \emph{prefix} and \emph{suffix graphs} of the substitution are defined by having $\mathcal{A}$ as set of nodes and edges $a\stackrel{p}{\rightarrow} b$, respectively $a\stackrel{s}{\rightarrow} b$, whenever $\sigma(a) = pbs$, for $b\in\mathcal{A}$ and $p,s\in\mathcal{A}^*$. 

\subsection{Substitutive subshifts}
Given a fixed point $u\in\mathcal{A}^\mathbb{Z}$ of a primitive substitution $\sigma$, let $X_\sigma = \overline{\{S^k u : k\in \mathbb{Z}\}}$, where $S:\mathcal{A}^\mathbb{Z}\to\mathcal{A}^\mathbb{Z}$ is the shift $S(u_i)_{i\in\Z}=(u_{i+1})_{i\in\Z}$.
The symbolic dynamical system $(X_\sigma,S)$ is minimal (every orbit is dense) and uniquely ergodic (there exists a unique $S$-invariant measure $\mu$).

\subsection{Representation space}
In this section we define all that is necessary for the representation space where the Rauzy fractals will live.

Let $\{\beta^{(1)},\ldots,\beta^{(r)},\beta^{(r+1)},\overline{\beta^{(r+1)}},\ldots,\beta^{(r+s)},\overline{\beta^{(r+s)}}\}$ be the Galois conjugates of the Pisot number $\beta=\beta^{(1)}$. If $\deg(\beta) = d$ then $d = r + 2s$.
Choose dual bases $\{\mathbf{u}_{\beta^{(i)}}\}_{i=1}^{r+s}$, $\{\mathbf{v}_{\beta^{(i)}}\}_{i=1}^{r+s}$ of right, respectively left eigenvectors for $M_\sigma$ associated with the $\{\beta^{(i)}\}_{i=1}^{r+s}$, such that each $\mathbf{v}_{\beta^{(i)}}\in \mathbb{Z}[\beta]^n$ and renormalize $\mathbf{u}_\beta$ such that $\mathbf{u}_\beta\cdot \mathbf{v}_\beta = 1$.

There exists a unique $M_\sigma$-invariant decomposition $\R^n = \mathbb{K}_\beta \oplus \mathbb{K}_n$, where the restriction of $M_\sigma$ to $\mathbb{K}_\beta$ is hyperbolic with characteristic polynomial $f(x)$. 
The \emph{Pisot space} $\mathbb{K}_\beta$ is the direct sum of the eigenspaces associated with the $\{\beta^{(i)}\}_{i=1}^{r+s}$. It has an expanding/contracting splitting $\mathbb{K}_\beta=\mathbb{K}_e\oplus\mathbb{K}_c$, where $\mathbb{K}_e = \R\mathbf{u}_\beta$ and $\mathbb{K}_c \cong \R^{r-1}\times\C^s$ is the $(d-1)$-dimensional linear subspace generated by the vectors 
$\{\mathbf{u}_{\beta^{(i)}}\}_{i=2}^{r+s}$. 
The \emph{neutral space} $\mathbb{K}_n := f(M_\sigma)\R^n$ is generated by the eigenspaces associated with the roots of the neutral polynomial $g(x)$. Since $\mathbf{v}_{\beta^{(i)}}  \in \ker(f({}^t M_\sigma))$, we deduce that  $\mathbf{v}_{\beta^{(i)}}  \perp \mathbb{K}_n$ for all $i\in\{1,\ldots,r+s\}$. We equip each of these spaces with its appropriate (according to its nature and dimension) Lebesgue measure $\lambda$.  

We consider the projections 
\begin{alignat*}{2}
\pi &: \mathbb{R}^n \to \mathbb{K}_\beta,&\quad  \mathbf{x} &\mapsto \sum_{i=1}^{r+s}\langle \mathbf{x},\mathbf{v}_{\beta^{(i)}}\rangle \mathbf{u}_{\beta^{(i)}}
 \\
\pi_e &: \mathbb{R}^n \to \mathbb{K}_e,&\quad  \mathbf{x} &\mapsto \langle \mathbf{x},\mathbf{v}_{\beta^{(1)}}\rangle\mathbf{u}_{\beta^{(1)}}  \\
\pi_c &: \mathbb{R}^n \to \mathbb{K}_c,&\quad  \mathbf{x} &\mapsto \sum_{i=2}^{r+s}\langle \mathbf{x},\mathbf{v}_{\beta^{(i)}}\rangle \mathbf{u}_{\beta^{(i)}}
\end{alignat*} 
Notice that $\pi(\mathbb{Z}^n)$ is a full rank lattice in $\mathbb{K}_\beta$ and the $\{\pi(\mathbf{e}_i)\}_{i=1}^n$ are redundant generators, as indicated by the following lemma.
\begin{lemma}\label{lem:redund}
Let $\sigma$ be a unit reducible Pisot substitution. Then the vectors $\{\pi(\mathbf{e}_i)\}_{i=1}^n$ are $\mathbb{Z}$-linearly dependent.
\end{lemma}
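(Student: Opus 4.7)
The plan is to exploit the fact that $\ker\pi = \mathbb{K}_n$ is defined over $\mathbb{Q}$, being the kernel of an integer matrix: reducibility will then force this kernel to contain a non-zero integer vector, which immediately yields the required $\mathbb{Z}$-linear relation among the $\pi(\mathbf{e}_i)$.

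First I would set up the algebraic identification of $\mathbb{K}_n$. Since $f$ is irreducible over $\mathbb{Q}$ and $\gcd(f,g)=1$, Bezout furnishes $a,b\in\mathbb{Q}[x]$ with $af+bg=1$; combined with Cayley--Hamilton ($f(M_\sigma)g(M_\sigma)=0$), the standard argument gives $\mathbb{K}_\beta=\ker f(M_\sigma)$ and $\mathbb{K}_n = f(M_\sigma)\mathbb{R}^n = \ker g(M_\sigma)$. In particular $\ker\pi = \ker g(M_\sigma)$. Next, since $\det(xI-M_\sigma)$ is monic with integer coefficients, Gauss's lemma implies that each of its monic factors lies in $\mathbb{Z}[x]$, so $g(M_\sigma)$ has integer entries. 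Its kernel in $\mathbb{Q}^n$ is therefore a $\mathbb{Q}$-subspace of the same dimension as $\mathbb{K}_n$ over $\mathbb{R}$, namely $\deg g = n-d$, which is strictly positive precisely because $\sigma$ is reducible. Selecting a non-zero rational vector there and clearing denominators produces $(c_1,\dots,c_n)\in\mathbb{Z}^n\setminus\{\mathbf{0}\}$ with $\sum_i c_i\mathbf{e}_i\in\ker\pi$, whence $\sum_i c_i\pi(\mathbf{e}_i)=0$, the desired dependency.

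There is no serious obstacle; the argument is essentially structural, relying only on the rationality of the $M_\sigma$-invariant splitting. Note that the unit hypothesis plays no role in this particular statement, although it is convenient for the adjacent assertion that $\pi(\mathbb{Z}^n)$ is a genuine full-rank lattice in $\mathbb{K}_\beta$ rather than merely a finitely generated (possibly dense) subgroup.
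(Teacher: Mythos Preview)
Your proof is correct and follows essentially the same idea as the paper's: exhibit a non-zero integer vector in $\mathbb{K}_n=\ker\pi$. The only tactical difference is that the paper uses the \emph{image} characterization $\mathbb{K}_n=f(M_\sigma)\mathbb{R}^n$ already given in the setup, and simply takes $\mathbf{y}=f(M_\sigma)\mathbf{e}_{i_0}$ for some $i_0$ with $\mathbf{y}\neq\mathbf{0}$; this is automatically an integer vector in $\mathbb{K}_n$, so no appeal to Bezout, the kernel description $\mathbb{K}_n=\ker g(M_\sigma)$, or clearing denominators is needed. Your route via $\ker g(M_\sigma)$ is equally valid but slightly less direct.
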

\begin{proof}
We know that $\{f(M_\sigma(\mathbf{e}_i));i=1,\ldots,n\}\ne\{\mathbf{0}\},
$ since $f(M_\sigma)$ is not trivial. Let $i_0$ such that  $f(M_\sigma(\mathbf{e}_{i_0}))=:\mathbf{y}\ne \mathbf{0}$. Then $\mathbf{y}\in\mathbb{Z}^n$, because $M_\sigma$ has integer coefficients. Thus $\mathbf{y}=\sum_{i=1}^nc_i\mathbf{e}_i$, where the $c_i\in\mathbb{Z}$ are not all $0$. It follows that 
$$\pi(\mathbf{y})=\sum_{i=1}^{r+s}\langle\mathbf{y},\mathbf{v}_{\beta^{(i)}}\rangle\mathbf{u}_{\beta^{(i)}}=\mathbf{0}.
$$
Indeed, $\mathbf{y}\in\mathbb{K}_n$ and all the $\mathbf{v}_{\beta^{(i)}}$ are orthogonal to $\mathbb{K}_n$, as mentioned above. This gives the non trivial relation 
$\pi(\mathbf{y})=\sum_{i=1}^n c_i\pi(\mathbf{e}_i)=\mathbf{0}$
with integer coefficients. 
\end{proof}
Since the $\mathbf{u}_{\beta^{(i)}}$ and $\mathbf{v}_{\beta^{(i)}}$ are right and respectively left eigenvectors of $M_\sigma$ associated with $\beta^{(i)}$, for $i=1,\ldots,r+s$, we have that $M_\sigma$ commutes with $\pi$, $\pi_e$ and $\pi_c$, and it is an expansion on $\mathbb{K}_e$ and a contraction on $\mathbb{K}_c$.

Given a measurable set $W\subseteq\mathbb{K}_c$, if $\beta$ is a unit we have
\begin{equation}\label{eq:contrac} \lambda(M_\sigma W) = \beta^{-1} \lambda(W). 
\end{equation}
Indeed, $M_\sigma$ is a uniform contraction whose eigenvalues are the Galois conjugates of $\beta$, and $|\beta^{(2)}\cdots\beta^{(r)}||\beta^{(r+1)}|^2\cdots
|\beta^{(r+s)}|^2 = \beta^{-1}$ since $\beta$ is a unit.

\subsection{Main example}\label{sec:main-ex}
We consider the one-parameter family of unit reducible Pisot substitutions
\begin{equation}\label{fam}
\sigma_t:\;1\mapsto 1^{t+1}2,\; 2\mapsto 3,\; 3\mapsto 4,\; 4\mapsto 1^t 5,\; 5\mapsto 1, \quad t\in\mathbb{N}_0,
\end{equation}  
with associated polynomials 
\[ f(x)g(x) = (x^3-tx^2-(t+1)x-1)(x^2-x+1). \] 
We have by notation $n=5$ and $d=3$. The incidence matrices are
$$\begin{array}{cc}
M_{\sigma_t}=\begin{pmatrix}
t+1&0&0&t&1\\
1&0&0&0&0\\
0&1&0&0&0\\
0&0&1&0&0\\
0&0&0&1&0
\end{pmatrix}.
\end{array}
$$
We will develop this example all along the following sections, applying to it progressively the results that we will obtain. Note that the Hokkaido substitution $\sigma_0$ of Section~\ref{sec:hok} belongs to this family.

\subsection{Standing assumptions}
For the rest of the paper we will always assume that $\mathcal{A} = \{1,2,\ldots,n\}$ and $\sigma$ is a primitive unit Pisot substitution with Pisot root $\beta$ such that $\deg(\beta) = d \le n$.

\section{Higher dimensional duals}\label{sec:duals} 
We recall the definition and main properties of $k$-dimensional extensions of a substitution and their dual, first defined in \cite{SAI}.

\subsection{Faces}\label{sec:faces}

Let $\mathcal{O}_k$ be the set of elements $\und{a}=a_1\wedge\cdots\wedge a_k\in \bigwedge_{i=1}^k\mathcal{A}$ with $a_1<\cdots<a_k$.
Let $C_k$ be the free $\Z$-module with basis elements in $\Z^n \times \mathcal{O}_k$. An element $(\mathbf{x},a_1\wedge\cdots\wedge a_k)\in C_k$ is called a {\it $k$-dimensional face} and consists of its {\it base point} $\mathbf{x}$ and of its {\it type} $\und{a}:= a_1\wedge\cdots\wedge a_k$. We shall think of $C_k$ as the space of formal finite integer weighted sums of $k$-dimensional faces. We also define $(\mathbf{x},a_1\wedge\cdots\wedge a_k)$ for more general cases as follows.
\begin{itemize}
\item $(\mathbf{x},a_1\wedge\cdots\wedge a_k)=0$ if $a_i = a_j$ for some $i,j$.
\item Antisymmetry: $(\mathbf{x},a_{\tau(1)}\wedge\cdots\wedge a_{\tau(k)}) = \text{sgn}(\tau) (\mathbf{x},a_1\wedge\cdots\wedge a_k)$, where $\text{sgn}(\tau)$ is the signature of the permutation $\tau$.
\end{itemize}
Observe that this justifies the wedge product as choice of notation.

Given a face $(\mathbf{x},a_1\wedge\cdots\wedge a_k)\in C_k$, its {\it support} is 
\[
\overline{(\mathbf{x},a_1\wedge\cdots\wedge a_k)} := \bigg\{\mathbf{x} + \sum_{i=1}^k t_i \mathbf{e}_{a_i}: t_i\in[0,1] \bigg\}.
\]
For the element $0$, the support is the empty set. For a general element of $ C_k$, $P=\sum_{l=1}^Nn_l(\mathbf{x}_l,\und{a}_l)$ with $n_l\in\mathbb{Z}\setminus\{ 0\}$ and $\und{a}_l\in\mathcal{O}_k$ for all $l\in\{1,\ldots, N\}$, the support is 
$$\overline{P}:=\bigcup_{l=1}^N\overline{(\mathbf{x}_l,\und{a}_l)}.
$$

Given an element $P\in C_k$, we denote by $\{P\}$ the set of faces $F=(\mathbf{x},\und{a})$ with $\und{a}\in\mathcal{O}_k$ appearing in $P$ with a non-zero coefficient. We then write $F\in \{P\}$ or simply $F\in P$. 
  
Finally, an element $\sum_{l=1}^Nn_l(\mathbf{x}_l,\und{a}_l)$ with $n_l\in\{-1,1\}$  for all $l\in\{1,\ldots, N\}$  is called \emph{geometric}. 
We say that a face $(\mathbf{x},\und{a})\in\mathbb{Z}^n\times\mathcal{O}_k$ is \emph{positive} or \emph{positively oriented}, while $-(\mathbf{x},\und{a})$ is \emph{negative} or \emph{negatively oriented}.

\subsection{Notations} 
Suppose that, for a given $\und{a}=a_1\wedge\cdots\wedge a_k\in\bigwedge_{i=1}^k \mathcal{A}$, we have $\sigma(a_i)=p_i b_i s_i$, for all $i\in\{1,\ldots,k\}$ and $b_i\in\mathcal{A}$. Then we let $\mathbf{p}=(p_1,\ldots,p_k)$,  $\mathbf{s}=(s_1,\ldots, s_k)$ and write $\sigma(\und{a})=\mathbf{p}\und{b}\mathbf{s}$. 

We denote by $\textrm{Per}_k$ the set of permutations on $\{1,\ldots,k\}$. For $\tau\in\textrm{Per}_k$ and $\und{a}\in\bigwedge_{i=1}^k \mathcal{A}$, the element $a_{\tau(1)}\wedge\cdots\wedge a_{\tau(k)}$ will be written $\und{a}_\tau$ for short.

Finally, we denote by  $\mathbf{l}(\mathbf{p})$ and $\mathbf{l}(\mathbf{s})$ the sum of the abelianizations of all prefixes $p_i$ appearing in $\mathbf{p}$, suffixes $s_i$ in $\mathbf{s}$ respectively.

\subsection{Extensions and their duals}
\begin{definition}
The $k$-\emph{dimensional extension} of $\sigma$ is the linear map on $C_k$
\begin{equation}\label{eq:Ek}
\mathbf{E}_k(\sigma)(\mathbf{x},\und{a}) = \\ 
\sum_{\sigma(\und{a})=\mathbf{p}\und{b}\mathbf{s}} (M_\sigma \mathbf{x} + \mathbf{l}(\mathbf{p}), \und{b} ). 
\end{equation}
\end{definition}

Write $(\mathbf{x},\und{a})^*$ for the dual of the element $(\mathbf{x},\und{a})\in C_k$.
Let $C_k^*$ be the free $\Z$-module generated by the basis elements $(\mathbf{x},\und{a})^*$. 
We are interested in the dual of $\mathbf{E}_k(\sigma)$.

\begin{proposition}
We have
\begin{equation}\label{eq:Ekstar}
\mathbf{E}^*_k(\sigma)(\mathbf{x},\und{a})^* = 
\sum_{\sigma(\und{b})=\mathbf{p}\und{a}\mathbf{s}} \big(M_\sigma^{-1}(\mathbf{x} - \mathbf{l}(\mathbf{p})),\und{b} \big)^*  
\end{equation}
\end{proposition}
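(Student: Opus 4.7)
The plan is to derive \eqref{eq:Ekstar} from the adjoint characterization of the dual. First I would introduce the canonical bilinear pairing $\langle \cdot,\cdot\rangle \colon C_k \times C_k^* \to \mathbb{Z}$ defined on sorted basis elements by
$$\langle (\mathbf{x},\und{a}),(\mathbf{y},\und{b})^*\rangle = \delta_{\mathbf{x},\mathbf{y}}\,\delta_{\und{a},\und{b}},\qquad \und{a},\und{b}\in\mathcal{O}_k,$$
and extended by bilinearity in such a way that the antisymmetry relations on $C_k$ are matched by those on $C_k^*$. By the universal property of duality, \eqref{eq:Ekstar} is equivalent to the adjoint identity
$$\langle \mathbf{E}_k(\sigma) F,\, G^* \rangle = \langle F,\, \mathbf{E}^*_k(\sigma) G^* \rangle \quad \text{for all } F\in C_k,\ G^*\in C_k^*,$$
and it suffices to check this on basis pairs.

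Next, for $F = (\mathbf{y},\und{c})$ and $G^* = (\mathbf{x},\und{a})^*$ with $\und{a},\und{c}\in\mathcal{O}_k$, equation \eqref{eq:Ek} yields
$$\langle \mathbf{E}_k(\sigma)(\mathbf{y},\und{c}),(\mathbf{x},\und{a})^* \rangle = \sum_{\sigma(\und{c})=\mathbf{p}\und{b}\mathbf{s}} \delta_{M_\sigma \mathbf{y} + \mathbf{l}(\mathbf{p}),\,\mathbf{x}}\;\delta_{\und{b},\und{a}},$$
while pairing $F$ with the candidate right-hand side of \eqref{eq:Ekstar} produces
$$\left\langle (\mathbf{y},\und{c}),\, \sum_{\sigma(\und{b})=\mathbf{p}\und{a}\mathbf{s}} \bigl(M_\sigma^{-1}(\mathbf{x} - \mathbf{l}(\mathbf{p})),\und{b}\bigr)^* \right\rangle = \sum_{\sigma(\und{b})=\mathbf{p}\und{a}\mathbf{s}} \delta_{\mathbf{y},\,M_\sigma^{-1}(\mathbf{x} - \mathbf{l}(\mathbf{p}))}\;\delta_{\und{c},\und{b}}.$$
After the Kronecker deltas are taken into account, both sums are parametrized by the same combinatorial data: $k$-tuples of positions recording an occurrence of each letter $a_i$ inside $\sigma(c_i)$, with induced prefixes $p_i$ and suffixes $s_i$. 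The base-point conditions $\mathbf{x} = M_\sigma \mathbf{y} + \mathbf{l}(\mathbf{p})$ and $\mathbf{y} = M_\sigma^{-1}(\mathbf{x} - \mathbf{l}(\mathbf{p}))$ coincide since $M_\sigma$ is invertible over $\mathbb{Q}$, so the two sums agree term by term.

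The main technical point is the sign bookkeeping induced by the wedge antisymmetry: if a decomposition $\sigma(\und{c}) = \mathbf{p}\und{b}\mathbf{s}$ produces a tuple $\und{b}$ whose entries are not already sorted, the corresponding face $(\,\cdot\,,\und{b})$ carries the signature of the sorting permutation, and an identical permutation appears on the dual side when one unpacks $\delta_{\und{c},\und{b}}$. The two signs enter symmetrically in the pairing and cancel, so after reduction to sorted representatives in $\mathcal{O}_k$ the index bijection between the two sums is the identity. Once this is settled, the adjoint identity holds on every basis pair, and \eqref{eq:Ekstar} follows by bilinearity.
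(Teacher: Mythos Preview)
Your proof is correct and follows essentially the same route as the paper: both verify the formula by checking the adjoint identity $\langle \mathbf{E}_k(\sigma)F,G^*\rangle=\langle F,\mathbf{E}^*_k(\sigma)G^*\rangle$ on basis pairs, reducing to the bijection between decompositions $\sigma(c_i)=p_i a_{\tau(i)} s_i$ and $\sigma(b_i)=p'_i a_i s'_i$ with $\und{b}$ a permutation of $\und{c}$. The only place where the paper is a bit more explicit is the sign bookkeeping: rather than saying the permutations ``enter symmetrically and cancel'', it observes concretely that the permutation $\tau$ sending $\und{b}$ to $\und{a}$ on one side and the permutation $\rho$ sending $\und{b}$ to $\und{c}$ on the other are inverse to each other (via the reindexing $\sigma(\und{b})=\mathbf{p}\und{a}_\tau\mathbf{s}\leftrightarrow\sigma(\und{b}_\tau)=\mathbf{p}\und{a}\mathbf{s}$), hence carry the same signature; your phrase ``an identical permutation appears on the dual side when one unpacks $\delta_{\und{c},\und{b}}$'' gestures at this but would benefit from making that inversion explicit.
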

\begin{proof}
Let $(\mathbf{x},\und{a})^*\in C_k^*$. By definition of the dual of the linear map $\mathbf{E}_k(\sigma)$ and by~\eqref{eq:Ek}, we can write
$$\begin{array}{l}
\langle\mathbf{E}_k^*(\sigma)(\mathbf{x},a_1\wedge\cdots\wedge a_k)^*,(\mathbf{y},b_1\wedge\cdots\wedge b_k)\rangle\\\\ 
= \langle(\mathbf{x},a_1\wedge\cdots\wedge a_k)^*,\mathbf{E}_k(\sigma)(\mathbf{y},b_1\wedge\cdots\wedge b_k)\rangle
\\\\
=\sum_{\sigma(\und{b})=\mathbf{p}\und{c}\mathbf{s}}\langle(\mathbf{x},a_1\wedge\cdots\wedge a_k)^*,
 (M_\sigma \mathbf{y} + \mathbf{l}(\mathbf{p}), \und{c} )\rangle.
\end{array}
$$
A term in this sum is non-zero only for those faces $(M_\sigma \mathbf{y} + \mathbf{l}(\mathbf{p}),\und{c})$ such that $M_\sigma \mathbf{y} + \mathbf{l}(\mathbf{p}) =\mathbf{x}$ and $\und{c} = a_{\tau(1)}\wedge\cdots\wedge a_{\tau(k)}=:\und{a}_\tau$ for some permutation $\tau$ on $\{1,\ldots,k\}$. The corresponding term is then equal to $\textrm{sgn}(\tau)$. 
Thus, denoted $\mathbf{z} = M_\sigma^{-1}(\mathbf{x} - \mathbf{l}(\mathbf{p}))$, this means that 
$\text{sgn}(\tau)(\mathbf{z},b_1\wedge\cdots\wedge b_k)^*$ appears in the image $\mathbf{E}_k^*(\sigma)(\mathbf{x},\und{a})^*$, with 
$\sigma(\und{b}) = \mathbf{p}\und{a}_\tau\mathbf{s}$.
Now we can reorder it into $(\mathbf{z},b_{\tau(1)}\wedge\cdots\wedge b_{\tau(k)})^*$ with $\sigma(\und{b}_\tau) = \mathbf{p}\und{a}\mathbf{s}$
 and rename. Hence we obtain
\[
\begin{array}{l}
\mathbf{E}_k^*(\sigma)(\mathbf{x},a_1\wedge\cdots\wedge a_k)^*
=\sum_{\sigma(\und{b})=\mathbf{p}\und{a}\mathbf{s}}
 (M_\sigma^{-1}(\mathbf{x}-\mathbf{l}(\mathbf{p})), \und{b} )^*.
\end{array} 
\qedhere 
\]
\end{proof}

\subsection{Poincar\'e maps and dual substitutions}

The dual maps are abstract objects formally defined on the dual basis, which has no geometric interpretation. We will interpret geometrically duals of faces of dimension $k$ as faces of dimension $n-k$, in a Poincar\'e duality flavor (cf.~\cite{SAI,AFHI}). 

\begin{definition}
The map $\varphi_k:C_k^*\to C_{n-k}$ is defined by
\[ \varphi_k:  (\mathbf{x},a_1\wedge\cdots\wedge a_k)^* \mapsto (-1)^{a_1+\cdots+a_k}(\mathbf{x}+\mathbf{e}_{a_1}+\cdots+\mathbf{e}_{a_k},b_1\wedge \cdots\wedge b_{n-k}) \]
where $\{a_1,\ldots,a_k\}$ and $\{b_1,\ldots,b_{n-k}\}$ form a partition of $\{1,2,\ldots,n\}$ with $a_1<\cdots< a_k$, $b_1<\cdots< b_{n-k}$. If $\und{a}=a_1\wedge\cdots\wedge a_k$ we will write $\und{a}^*= b_1\wedge\cdots\wedge b_{n-k}$. We call $(\mathbf{x},\und{a}^*)$ the $(n-k)$-dimensional face \emph{transverse} to $\und{a}$.
\end{definition}
It was shown in \cite{SAI} that this map commutes with the boundary and coboundary operators (see Section~\ref{sec:boundop}). Furthermore $\varphi_k$ is invertible.
Now we can conjugate the dual maps by $\varphi_k$ to obtain explicit geometric realizations.

\begin{definition}\label{defgeodual}
The \emph{geometric dual map} $\mathbf{E}^k(\sigma)$ is defined as 
\[ \mathbf{E}^k(\sigma)\circ \varphi_{n-k} = \varphi_{n-k} \circ \mathbf{E}^*_{n-k}(\sigma). \]
\end{definition}

In general we have $n = \#\mathcal{A} \geq d =\deg(\beta)$ with strict inequality for reducible substitutions. We want to represent the action of the substitution geometrically on $\mathbb{K}_c \cong \mathbb{R}^{d-1}$, thus it makes sense to consider the action of a dual substitution on $(d-1)$-dimensional faces. For this reason we will work with the geometric realization $\mathbf{E}^{d-1}(\sigma)$ conjugate to $\mathbf{E}_{n-d+1}^*(\sigma)$ via $\varphi_{n-d+1}$ (note that if $\sigma$ is irreducible then the dual substitution is $\mathbf{E}_1^*(\sigma)$). To be more concise we will denote $\bar n = n-d+1$.
An explicit formula for $\mathbf{E}^{d-1}(\sigma)$ reads as follows.

\begin{proposition}
Let $\mathbf{x}\in\mathbb{Z}^n$ and $\und{a}=a_1\wedge\cdots\wedge a_{\bar n}\in\mathcal{O}_{\bar n}$. Then 
the following holds for the $(d-1)$-dimensional face $(\mathbf{x},\und{a}^*)$ transverse to $\und{a}$. 
\begin{align} \label{eq:Edm1} \index{dual substitution!$\mathbf{E}^{d-1}(\sigma)$}
\mathbf{E}^{d-1}(\sigma)(\mathbf{x},\und{a}^*) &= \sum_{\tau\in\mathrm{Per}_{\bar n}}\sum_{\sigma(\und{b})=\mathbf{p}\und{a}_\tau\mathbf{s}}\mathrm{sgn}(\tau) (-1)^{\und{a}+\und{b}}\big(M_\sigma^{-1}(\mathbf{x} + \mathbf{l}(\mathbf{s})),\und{b}^* \big) 
\end{align}
where $(-1)^{\und{a}}$ denotes $(-1)^{a_1+\cdots+a_{\bar n}}$. In this formula, $\und{b}=b_1\wedge\cdots\wedge b_{\bar n}\in\mathcal{O}_{\bar n}$.
\end{proposition}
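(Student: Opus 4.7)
The plan is to unpack the definition $\mathbf{E}^{d-1}(\sigma) = \varphi_{\bar n}\circ\mathbf{E}^*_{\bar n}(\sigma)\circ\varphi_{\bar n}^{-1}$ by applying the three maps in turn to a single transverse face $(\mathbf{x},\und{a}^*)$, tracking the base point and the sign contributions separately.

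First I would invert $\varphi_{\bar n}$: reading off the defining formula gives
$$\varphi_{\bar n}^{-1}(\mathbf{x},\und{a}^*) = (-1)^{\und{a}}\bigl(\mathbf{x}-\mathbf{e}_{a_1}-\cdots-\mathbf{e}_{a_{\bar n}},\,\und{a}\bigr)^*.$$
Next I would apply formula \eqref{eq:Ekstar}. Since the output $\und{b}$ is required to lie in $\mathcal{O}_{\bar n}$, the sum must be split according to permutations: a basis element $(\mathbf{z},\und{b})^*$ with $\und{b}\in\mathcal{O}_{\bar n}$ contributes whenever there exist $\mathbf{p},\mathbf{s}$ and $\tau\in\mathrm{Per}_{\bar n}$ with $\sigma(\und{b}) = \mathbf{p}\und{a}_\tau\mathbf{s}$, weighted by $\mathrm{sgn}(\tau)$ arising from reordering $\und{a}_\tau$ into $\und{a}$ via antisymmetry. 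Finally I would push each resulting term through $\varphi_{\bar n}$, which introduces a sign $(-1)^{\und{b}}$ and shifts the base point by $\mathbf{e}_{b_1}+\cdots+\mathbf{e}_{b_{\bar n}}$.

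The three signs compose to $\mathrm{sgn}(\tau)(-1)^{\und{a}+\und{b}}$, which is exactly the coefficient appearing in \eqref{eq:Edm1}. It then remains to verify that the base point simplifies to $M_\sigma^{-1}(\mathbf{x}+\mathbf{l}(\mathbf{s}))$. Combining the two stages, the base point equals
$$M_\sigma^{-1}\bigl(\mathbf{x}-\mathbf{e}_{a_1}-\cdots-\mathbf{e}_{a_{\bar n}}-\mathbf{l}(\mathbf{p})\bigr)+\mathbf{e}_{b_1}+\cdots+\mathbf{e}_{b_{\bar n}}.$$
The key algebraic identity, obtained by summing $M_\sigma\mathbf{e}_{b_i} = \mathbf{l}(\sigma(b_i)) = \mathbf{l}(p_i)+\mathbf{e}_{a_{\tau(i)}}+\mathbf{l}(s_i)$ over $i=1,\ldots,\bar n$, is
$$M_\sigma(\mathbf{e}_{b_1}+\cdots+\mathbf{e}_{b_{\bar n}}) = \mathbf{l}(\mathbf{p}) + \mathbf{e}_{a_1}+\cdots+\mathbf{e}_{a_{\bar n}} + \mathbf{l}(\mathbf{s}),$$
where $\tau$ merely permutes the middle summands. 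Substituting back and using the $M_\sigma$-linearity of $M_\sigma^{-1}$ produces the claimed base point $M_\sigma^{-1}(\mathbf{x}+\mathbf{l}(\mathbf{s}))$, completing the calculation.

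I expect the only real obstacle to be the sign bookkeeping, specifically the interplay between the factor $(-1)^{\und{a}}$ coming from $\varphi_{\bar n}^{-1}$, the signature $\mathrm{sgn}(\tau)$ from the antisymmetry used in $\mathbf{E}^*_{\bar n}(\sigma)$, and the factor $(-1)^{\und{b}}$ from the final application of $\varphi_{\bar n}$. Once these are organized, the remaining content reduces to the straightforward computation of $\sum_i M_\sigma\mathbf{e}_{b_i}$ from the definition of the abelianization map and the factorization $\sigma(\und{b}) = \mathbf{p}\und{a}_\tau\mathbf{s}$.
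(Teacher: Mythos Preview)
Your proposal is correct and follows essentially the same route as the paper's proof: both compute $\varphi_{\bar n}\circ\mathbf{E}^*_{\bar n}(\sigma)\circ\varphi_{\bar n}^{-1}$ step by step, track the three sign contributions $(-1)^{\und{a}}$, $\mathrm{sgn}(\tau)$, and $(-1)^{\und{b}}$ in the same way, and use the identity $M_\sigma\mathbf{e}_{b_i}=\mathbf{l}(\sigma(b_i))=\mathbf{l}(p_i)+\mathbf{e}_{a_{\tau(i)}}+\mathbf{l}(s_i)$ to simplify the base point to $M_\sigma^{-1}(\mathbf{x}+\mathbf{l}(\mathbf{s}))$.
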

\begin{proof}
A face $(\mathbf{x},\und{a}^*)$ is sent by $\varphi_{\bar n}^{-1}$ to $(-1)^{\und{a}}(\mathbf{x}-(\mathbf{e}_{a_{1}}+\cdots+\mathbf{e}_{a_{\bar n}}),\und{a})^*$. 

Applying $\mathbf{E}_{\bar n}^*(\sigma)$ we get the sum of all elements
\[ \textrm{sign}(\tau)
(-1)^{\und{a}}\big(M_\sigma^{-1}(\mathbf{x}-(\mathbf{e}_{a_1}+\cdots+\mathbf{e}_{a_{\bar n}})-(\mathbf{l}(p_1)+\cdots+\mathbf{l}(p_{\bar n}))),\und{b}\big)^*,\] 
for $\tau\in\textrm{Per}_{\bar n}$, $\und{b}=b_1\wedge\cdots\wedge b_{\bar n}\in\mathcal{O}_{\bar n}$ and $\sigma(\und{b})=\mathbf{p}\und{a}_\tau\mathbf{s}$. 

Applying now $\varphi_{\bar n}$ leads to the sum of all elements
\begin{multline*}
\textrm{sgn}(\tau)
(-1)^{\und{a}+\und{b}}\Big(M_\sigma^{-1}\big(\mathbf{x}-(\mathbf{l}(p_1a_{\tau(1)})+\cdots+\mathbf{l}(p_{\bar n}a_{\tau(\bar n)})) + \\
+M_\sigma(\mathbf{e}_{b_1}+\cdots+\mathbf{e}_{b_{\bar n}})\big),\und{b}^*\Big), 
\end{multline*}
again for $\tau\in\textrm{Per}_{\bar n}$ and $\sigma(\und{b})=\mathbf{p}\und{a}_\tau\mathbf{s}$.

Therefore, since 
$$M_\sigma \mathbf{e}_{b_i} = \mathbf{l}(\sigma(b_i)) = \mathbf{l}(p_ia_{\tau(i)}) + \mathbf{l}(s_i)$$ holds for all $i$,
we get the result.
\end{proof}

Similar formulas hold for general $\mathbf{E}^k(\sigma)$. Notice that, since the geometric dual substitution $\mathbf{E}^k(\sigma)$ is conjugate to $\mathbf{E}_{n-k}^*(\sigma)$, it still depends on wedges of $n-k$ letters (see Equation \eqref{eq:Edm1} for $k=d-1$). 

\subsection{Matrices}
We define here abelianizations of the $k$-dimensional extension $\mathbf{E}_k(\sigma)$, of its dual map $\mathbf{E}_k^*(\sigma)$ and of the geometric realization $\mathbf{E}^k(\sigma)$ of the dual map $\mathbf{E}_{n-k}^*(\sigma)$ for a substitution $\sigma$. To this effect, we order the elements of $\mathcal{O}_k$ lexicographically and define the following matrices with entries indexed by pairs $(\und{b},\und{a})$ of elements of $\mathcal{O}_k$.

\subsubsection*{Matrix $B_k$ for $\mathbf{E}_k(\sigma)$}
For $\und{a}\in\mathcal{O}_k$, we rewrite \eqref{eq:Ek} in such a way that $\und{b}\in\mathcal{O}_k$ for all faces appearing in the formula:
$$\mathbf{E}_k(\sigma)(\mathbf{x},\und{a}) = \\ 
\sum_{\und{b}\in\mathcal{O}_k}\sum_{\tau\in\textrm{Per}_k}\sum_{\sigma(\und{a})=\mathbf{p}\und{b}_\tau\mathbf{s}} \textrm{sgn}(\tau)(M_\sigma \mathbf{x} + \mathbf{l}(\mathbf{p}), \und{b} ).
$$ 
Then 
$$(B_k)_{\und{b},\und{a}}:=\sum_{\tau\in\textrm{Per}_k}\sum_{\sigma(\und{a})=\mathbf{p}\und{b}_\tau\mathbf{s}} \textrm{sgn}(\tau)=:|\mathbf{E}_k(\sigma)(\mathbf{0},\und{a})|_{\und{b}} \in\mathbb{Z}
$$
Here $\lvert\cdot\rvert_{\und{b}}$ can be seen as the number of times a face of a given type $\und{b}\in\mathcal{O}_k$ occurs (this number may be negative). Note that $B_1=M_\sigma$.  More generally, we have
$$B_k=\bigwedge_{i=1}^kM_\sigma,
$$
the $k$-th exterior product of the matrix $M_\sigma$, obtained by considering all the minors of order $k$ of the matrix $M_\sigma$. More precisely, the coefficient $(\und{b}=b_1\wedge \cdots\wedge b_k,\und{a}=a_1\wedge\cdots\wedge a_k)$ of $\bigwedge_{i=1}^kM_\sigma$ is the minor obtained from $M_\sigma$ by keeping the rows $b_1,\ldots,b_{k}$ and the columns $a_1,\ldots,a_{k}$ of $M_\sigma$. The eigenvalues of $\bigwedge_{i=1}^kM_\sigma$ are the products of $k$ distinct eigenvalues of $M_\sigma$ (counted with their multiplicity).

\subsubsection*{Matrix $M_k^*$ for the dual $\mathbf{E}_k^*(\sigma)$}
Similarly, we define
$$(M_k^*)_{\und{b},\und{a}}:=\sum_{\tau\in\textrm{Per}_k}\sum_{\sigma(\und{b})=\mathbf{p}\und{a}_\tau\mathbf{s}} \textrm{sgn}(\tau)=:|\mathbf{E}_k^*(\sigma)(\mathbf{0},\und{a})^*|_{\und{b}} \in\mathbb{Z},
$$
and obviously we have the relation
$$M_k^*={}^tB_k=\bigwedge_{i=1}^k{}^tM_\sigma.
$$
Note that here $\lvert\cdot\rvert_{\und{b}}$ counts the number of times a dual face of type $\und{b}$ occurs.

\subsubsection*{Matrix $M_k$ for the geometric dual map $\mathbf{E}^k(\sigma)$}
We finally define the matrix $M_k$ associated with the operator $\mathbf{E}^k(\sigma)$ by 
\begin{equation} \label{eq:Mk}
(M_k)_{\und{b}^*,\und{a}^*} :=\sum_{\tau\in\textrm{Per}_{n-k}}\sum_{\sigma(\und{b})=\mathbf{p}\und{a}_\tau\mathbf{s}}\textrm{sgn}(\tau)(-1)^{\und{a}+\und{b}}
=: |\mathbf{E}^k(\sigma)(\mathbf{0},\und{a}^*)|_{\und{b}^*} \in\mathbb{Z}
\end{equation}
for $\und{a},\und{b}\in\mathcal{O}_{n-k}$. In other words, 
\begin{equation} \label{eq:MkRel}
(M_k)_{\und{b}^*,\und{a}^*}=(-1)^{\und{a}+\und{b}}\left(M_{n-k}^*\right)_{\und{b},\und{a}}=(-1)^{\und{a}+\und{b}}\left(\bigwedge_{i=1}^{n-k}{}^tM_\sigma\right)_{\und{b},\und{a}}.
\end{equation}
For this reason, the matrix $M_k$ will be written down with respect to the basis $\{\und{a}^*;\und{a}\in\mathcal{O}_{n-k}\}$, ordered in the same order as the elements of $\mathcal{O}_{n-k}$ (see Example~\ref{ex:tribo}). 

We are mostly interested in $\mathbf{E}^{d-1}(\sigma)$ and $\mathbf{E}^{d-2}(\sigma)$. The matrices $M_{d-1}$ and $M_{d-2}$ describe respectively the growth of $(d-1)$ and $(d-2)$-dimensional faces in the images of $\mathbf{E}^{d-1}(\sigma)$ and $\mathbf{E}^{d-2}(\sigma)$.

\medskip

\begin{example}\label{ex:tribo}
Let $\sigma:1\mapsto 12,\, 2\mapsto 13,\, 3\mapsto 1$ be the Tribonacci substitution. Then
\begin{alignat*}{2}
\mathbf{E}^2(\sigma) 
& : &\,  (\mathbf{0},2\wedge 3) &\mapsto (M^{-1}_\sigma\mathbf{e}_2,2\wedge 3) -(M^{-1}_\sigma\mathbf{e}_3,1\wedge 3) + (\mathbf{0},1\wedge 2),
\\
& &\, (\mathbf{0},1\wedge 3) &\mapsto -(\mathbf{0},2\wedge 3),   \\
& & \,(\mathbf{0},1\wedge 2) &\mapsto -(\mathbf{0},1\wedge 3),   \\
\mathbf{E}^1(\sigma) 
& : &\,  (\mathbf{0},3) &\mapsto -(M^{-1}_\sigma\mathbf{e}_3,3) + (\mathbf{0},2)\\
& &\, (\mathbf{0},2) &\mapsto -(M^{-1}_\sigma\mathbf{e}_2,3) + (\mathbf{0},1)  \\
& &\,(\mathbf{0},1) &\mapsto (\mathbf{0},3) 
\end{alignat*}
with matrices
\[
M_2 = \Bigl(\begin{smallmatrix} 1 & -1 & 0 \\ -1 & 0 & -1 \\ 1 & 0 & 0 \end{smallmatrix}\Bigr), \qquad   
M_1 = \Bigl(\begin{smallmatrix} -1& -1 & 1 \\ 1 & 0 & 0 \\ 0 & 1 & 0 \end{smallmatrix}\Bigr).
\]
The matrix $M_2$ is written with respect to the basis $\{1^*,2^*,3^*\}$, and similarly $M_1$ is written with respect to the basis $\{(1\wedge2)^*,(1\wedge3)^*,(2\wedge3)^*\}$. One can check \eqref{eq:MkRel} by comparing these matrices with 
\[
M^*_{1} = {}^tM_\sigma = \Bigl(\begin{smallmatrix} 1 & 1 & 0 \\ 1 & 0 & 1 \\ 1 & 0 & 0 \end{smallmatrix}\Bigr), \qquad   
M^*_{2} = {}^tM_\sigma\wedge {}^tM_\sigma = \Bigl(\begin{smallmatrix} -1& 1 & 1 \\ -1 & 0 & 0 \\ 0 & -1 & 0 \end{smallmatrix}\Bigr).
\]
\end{example}

\noindent {\bf Main example.} 
We consider the one-parameter family of unit reducible Pisot substitutions defined in Section~\ref{sec:main-ex}. Since $\#\mathcal{A}=5$ and $\deg(\beta)=3$ we will consider the geometric dual substitution $\mathbf{E}^2(\sigma_t)$ conjugate to $\mathbf{E}_3^*(\sigma_t)$. We have $\mathcal{O}_3=\{1\wedge2\wedge 3,\ldots,3\wedge4\wedge5\}$.

For $\sigma_t$ ($t\geq 0$), the dual maps are given explicitly by the following images of the basis $\{(\mathbf{0},\und{a}^*);\und{a}\in\mathcal{O}_{3}\}$, written in the lexicographic order of $\mathcal{O}_{3}$.

\begin{align} \label{eq:E2sigmat}
\mathbf{E}^2(\sigma_t): (\mathbf{0},4\wedge 5) &\mapsto -\sum_{j=1}^{t}(M_{\sigma_t}^{-1}((t-j)\mathbf{e}_1+\mathbf{e}_5),3\wedge 5) \;+\; (\mathbf{0},3\wedge 4)  \\  
(\mathbf{0},3\wedge 5) &\mapsto -\sum_{j=1}^{t}(M_{\sigma_t}^{-1}((t-j)\mathbf{e}_1+\mathbf{e}_5),2\wedge 5)\;+\;(\mathbf{0},2\wedge 4)  \nonumber \\
(\mathbf{0},3\wedge 4) &\mapsto (\mathbf{0},2\wedge 3) \nonumber  \\  
(\mathbf{0},2\wedge 5) &\mapsto \sum_{j=1}^{t+1}(M_{\sigma_t}^{-1}((t+1-j)\mathbf{e}_1+\mathbf{e}_2),4\wedge 5) \nonumber \\
&\;\; -\sum_{j=1}^{t}(M_{\sigma_t}^{-1}((t-j)\mathbf{e}_1+\mathbf{e}_5),1\wedge 5)
\;+\; (\mathbf{0},1\wedge 4) \nonumber \\  
(\mathbf{0},2\wedge 4) &\mapsto \sum_{j=1}^{t+1}(M_{\sigma_t}^{-1}((t+1-j)\mathbf{e}_1+\mathbf{e}_2),3\wedge 5) \;+\;(\mathbf{0},1\wedge 3) \nonumber \\  
(\mathbf{0},2\wedge 3) &\mapsto \sum_{j=1}^{t+1}(M_{\sigma_t}^{-1}((t+1-j)\mathbf{e}_1+\mathbf{e}_2)),2\wedge 5) \;+\; (\mathbf{0},1\wedge 2) \nonumber \\
(\mathbf{0},1\wedge 5) &\mapsto -(\mathbf{0},4\wedge 5) \nonumber \\ 
(\mathbf{0},1\wedge 4) &\mapsto -(\mathbf{0},3\wedge 5) \nonumber \\ 
(\mathbf{0},1\wedge 3) &\mapsto -(\mathbf{0},2\wedge 5) \nonumber \\ 
(\mathbf{0},1\wedge 2) &\mapsto -(\mathbf{0},1\wedge 5)\nonumber
\end{align}

The associated matrices are
$$M_2(\sigma_t)=\left(\begin{smallmatrix}
0&0&0&t+1&0&0&-1&0&0&0\\
-t&0&0&0&t+1&0&0&-1&0&0\\
1&0&0&0&0&0&0&0&0&0\\
0&-t&0&0&0&t+1&0&0&-1&0\\
0&1&0&0&0&0&0&0&0&0\\
0&0&1&0&0&0&0&0&0&0\\
0&0&0&-t&0&0&0&0&0&-1\\
0&0&0&1&0&0&0&0&0&0\\
0&0&0&0&1&0&0&0&0&0\\
0&0&0&0&0&1&0&0&0&0
\end{smallmatrix}\right)
$$
One can check that, for all $t\geq 0$, $M_3(\sigma_t)^*={}^tM_{\sigma_t}\wedge{}^tM_{\sigma_t}\wedge{}^tM_{\sigma_t}=\left|M_2(\sigma_t)\right|$ is a primitive matrix.

\begin{remark}\label{rmk:matrixconj}
Since by definition $\mathbf{E}^{d-1}(\sigma)$ is conjugate to $\mathbf{E}_{\bar n}^*(\sigma)$ via $\varphi_{\bar n}$, the matrix $M_{d-1}$ is conjugate to $M^*_{\bar n}$, i.e.~there exists $N\in \text{GL}(\binom{n}{d-1},\mathbb{Z})$ such that 
\[ M^*_{\bar n} = N^{-1} M_{d-1} N.\]
Furthermore we can choose $N$ to be a diagonal matrix with entries in $\{1,-1\}$. A similar approach treating $N$ was considered in \cite{FIR}.
\end{remark}

\subsection{Positivity and cancellation}\label{sec:canc}

Even if $M_\sigma$ is a non-negative matrix, we have seen that the matrices $M^*_k$ and $M_{n-k}$ can have negative entries. This implies that cancellation may occur for $\mathbf{E}_{k}^*(\sigma)$ and $\mathbf{E}^{n-k}(\sigma)$. 

There are two types of cancellation, as explained as follows.
By definition the abelianization $M^*_{k}$ counts the elements occurring in the image of the dual $\mathbf{E}^*_{k}(\sigma)$, ``forgetting'' their base points. In particular, if two elements $(\mathbf{x},\und{b})^*$ and $-(\mathbf{y},\und{b})^*$ with $\mathbf{x}\neq\mathbf{y}$ occur in the image $\mathbf{E}^*_{k}(\sigma)(\mathbf{0},\und{a})^*$, then they cancel in $M^*_{k}$, i.e.~$(M^*_k)_{\und{b},\und{a}} = 0$. This is an example of ``bad cancellation'': geometrically, the two elements should not cancel, as they both contribute to the total Lebesgue measure of $\mathbf{E}^{n-k}(\sigma)\varphi_{k}(\mathbf{0},\und{a})^*$. The only ``good cancellation'' that $M^*_k$ takes into account is the cancellation of two elements based at the same point and with the same type but opposite orientations. We rather wish to avoid bad cancellation in order to see  $M^*_k$ as a faithful algebraic description of the substitution $\mathbf{E}^*_{k}(\sigma)$.

We may be able to define $\mathcal{O}_k$ differently (i.e. without respecting the lexicographic order) in a way that each element of $\mathcal{O}_k$ is sent by $\mathbf{E}^*_k(\sigma)$ to a sum of  elements of $\mathcal{O}_k$. We then say that $\mathbf{E}^*_k(\sigma)$ is \emph{positive} (with respect to $\mathcal{O}_k$). If this is the case then $M^*_k$ is non-negative, no cancellation occurs at all, and $\mathbf{E}^*_k(\sigma)$ behaves as a substitution. This is the concept of {\it positivity} defined in \cite{AFHI} and which will motivate our hypothesis (P) in Section~\ref{sec:hyp}.

An easy calculation based on the conjugacy between $\mathbf{E}^*_k(\sigma)$ and $\mathbf{E}^{n-k}(\sigma)$ shows that if $\mathbf{E}^*_k(\sigma)$ is positive, then for every $\und{a}\in \mathcal{O}_{n-k}$, the faces of type $\und{b}$ occurring in $\mathbf{E}^{n-k}(\sigma)(\mathbf{0},\und{a})$ are all positively or all negatively oriented. This implies that no cancellation happens neither for $\mathbf{E}^{n-k}(\sigma)$.

\subsection{Boundary and coboundary operators}\label{sec:boundop}

All this can be done as in the classical simplicial homology and cohomology theory (see e.g.~\cite{Hat}). The following considerations can be found in~\cite{SAI}.  

There is a boundary operator which associates with a $k$-dimensional face its boundary consisting in a union of oriented $(k-1)$-dimensional faces. 
\begin{definition}\label{def:boundop}
The boundary operator $\partial:\cup_{k=1}^nC_k\to\cup_{k=0}^{n-1}C_k$ is defined on the basis elements $(\mathbf{x},\und{a})$ with $\mathbf{x}\in\mathbb{R}^n$ and $\und{a}=a_1\wedge\cdots\wedge a_k\in\mathcal{O}_k$, $1\leq k\leq n$, by
$$\begin{array}{rcl}
\partial(\mathbf{x},\und{a})&=&\sum_{i=1}^k(-1)^i\left((\mathbf{x},a_{1}\wedge\cdots\wedge \widehat{a_i}\wedge\cdots\wedge a_k)\right.\\
&&\left.-(\mathbf{x}+\mathbf{e}_{a_i},a_{1}\wedge\cdots\wedge \widehat{a_i}\wedge\cdots\wedge a_k)\right)
\end{array}
$$
Here, $a_{1}\wedge\cdots\wedge \widehat{a_i}\wedge\cdots\wedge a_k=a_{1}\wedge\cdots\wedge a_{i-1}\wedge a_{i+1}\wedge\cdots\wedge a_k$.  For $k=1$, we simply write $\widehat{a_i}:=\bullet$.
\end{definition}

By duality, a coboundary operator $\partial^*:\cup_{k=0}^{n-1}C_k^*\to\cup_{k=1}^{n}C_k^*$ acting on duals of faces can be defined as well. An explicit formula is given in \cite[Proposition 1.1]{SAI}. Moreover, the Poincar\'e maps conjugate the boundary and coboundary operators:
\begin{equation}\label{eq:poincbocobo}
\varphi_k\circ \partial^*=\partial \circ \varphi_{k-1}.
\end{equation}
An important property is that the boundary and coboundary operators commute with $\mathbf{E}_k(\sigma)$ and $\mathbf{E}^*_k(\sigma)$ respectively. More precisely, 
\begin{align}
\partial\circ\mathbf{E}_k(\sigma) &= \mathbf{E}_{k-1}(\sigma)\circ\partial, \\
\partial^*\circ\mathbf{E}^*_k(\sigma) &= \mathbf{E}^*_{k+1}(\sigma)\circ\partial^*. \label{boundcomm}
\end{align} 
Notice that positivity is not preserved under application of $\partial$ or $\partial^*$.

To derive geometric properties of the Rauzy fractals associated with a substitution, we will be interested in the geometric realizations $\mathbf{E}^{d-1}(\sigma)$ and $\mathbf{E}^{d-2}(\sigma)$ of the duals of a substitution. Note that it follows from Equation~\eqref{boundcomm}, \eqref{eq:poincbocobo} and Definition~\ref{defgeodual} that
\begin{equation}\label{eq:relgeomreal}
\partial\circ\mathbf{E}^{d-1}(\sigma) = \mathbf{E}^{d-2}(\sigma)\circ\partial.  
\end{equation}

We will see in Section~\ref{sec:infsub} that the formula \eqref{eq:relgeomreal} will play a prominent role in describing $\mathbf{E}^{d-1}(\sigma)$ as an inflate-and-subdivide rule.

\section{Stepped surfaces}
\label{sec:step}

In this section, we give the general construction for the stepped surface of a unimodular Pisot substitution. We introduce in Section~\ref{sec:hyp} and \ref{sec:geomfinprop} some fundamental properties in order for the stepped surface of the substitution to be well-defined and have the expected properties.
We use some terminology of \cite{AFHI}. Recall that $n=\#\mathcal{A}$ and $d=\deg(\beta)$ and, from Section~\ref{sec:faces}, that a geometric element of $C_{d-1}$ is an element $\sum_{l=1}^Nn_{l}(\mathbf{x}_l,\und{a}_l)$ with $n_{l}\in\{-1,1\}$ for all $l\in\{1,\ldots,N\}$.

\begin{definition}
A geometric element $P=\sum_{l=1}^Nn_{l}(\mathbf{x}_l,\und{a}_l)$ is said to \emph{project well onto} $\K_c$ if the projections $\pi_c\overline{(\mathbf{x}_l,\und{a}_l)}$, $\pi_c\overline{(\mathbf{x}_{l'},\und{a}_{l'})}$ of distinct faces $(\mathbf{x}_l,\und{a}_l)$, $(\mathbf{x}_{l'},\und{a}_{l'})$ ($l\ne l'$)  of $P$  have mutually disjoint interiors. 

A \emph{tiling} of $\mathbb{K}_c$ is a covering of $\mathbb{K}_c$ by compact sets such that Lebesgue almost every point of $\mathbb{K}_c$ is contained in exactly one element of the covering.

A \emph{stepped surface} $\Gamma$ is a union of $(d-1)$-dimensional faces such that $\pi_c(\overline{\Gamma})$ is a (polygonal) tiling of $\K_c$.
\end{definition}

We look for a stepped surface invariant under (a power of) $\mathbf{E}^{d-1}(\sigma)$.
Let $\mathcal{U}\in C_{d-1}$ be a geometric element of $(d-1)$-dimensional faces based at $\mathbf{0}$. Suppose that $\mathcal{U}$ projects well and that there exists an integer $m\geq 1$ such that $\mathbf{E}^{d-1}(\sigma)^m(\mathcal{U})$ is geometric and satisfies $\{\mathcal{U} \}\subseteq \{\mathbf{E}^{d-1}(\sigma)^m(\mathcal{U})\}$. We associate to such a geometric element  $\mathcal{U}$ the following potential candidate for a stepped surface:
\begin{equation}\label{gamma}
\Gamma_\mathcal{U} := \bigcup_{k\geq 0} \{\mathbf{E}^{d-1}(\sigma)^{mk}(\mathcal{U})\}. 
\end{equation}

\subsection{Faces near to $\mathbb{K}_c$}
Given two vectors $\mathbf{z}_1, \mathbf{z}_2\in\mathbb{R}^n$ we denote by $[\mathbf{z}_1,\mathbf{z}_2)$ the set $\{(1-t)\,\mathbf{z}_1 + t\mathbf{z}_2: t \in [0,1)\}$.

Let
\[ \mathcal{S}^*=\{(\mathbf{x},\und{a})^*: \mathbf{x} \in \mathbb{Z}^n, \und{a}\in \mathcal{O}_{\bar n},  \pi_e(\mathbf{x}) \in [-\pi_e(\mathbf{l}(\und{a})),\mathbf{0}) \}. \]
Recall that $\bar n = n-d+1$.

\begin{proposition}\label{invS} The inclusion
$\{\mathbf{E}^*_{\bar n}(\sigma)(\mathcal{S}^*)\} \subseteq \mathcal{S}^*$ holds.
\end{proposition}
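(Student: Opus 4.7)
The plan is to take an arbitrary face $(\mathbf{y},\und{b})^*$ appearing in $\mathbf{E}^*_{\bar n}(\sigma)(\mathbf{x},\und{a})^*$ for $(\mathbf{x},\und{a})^* \in \mathcal{S}^*$ and check the two defining conditions of $\mathcal{S}^*$: that $\mathbf{y}\in\mathbb{Z}^n$ and that $\pi_e(\mathbf{y})\in[-\pi_e(\mathbf{l}(\und{b})),\mathbf{0})$. By~\eqref{eq:Ekstar}, such a face arises from some decomposition $\sigma(\und{b})=\mathbf{p}\und{a}\mathbf{s}$ (i.e.\ $\sigma(b_i)=p_ia_is_i$ for each $i\in\{1,\ldots,\bar n\}$) and has $\mathbf{y}=M_\sigma^{-1}(\mathbf{x}-\mathbf{l}(\mathbf{p}))$. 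Integrality of $\mathbf{y}$ will follow immediately from the standing unit Pisot hypothesis, which makes $M_\sigma$ unimodular so that $M_\sigma^{-1}$ has integer coefficients; alternatively one can substitute $\mathbf{l}(\mathbf{p})=M_\sigma\mathbf{l}(\und{b})-\mathbf{l}(\und{a})-\mathbf{l}(\mathbf{s})$ (see below) to see that $\mathbf{y}=M_\sigma^{-1}(\mathbf{x}+\mathbf{l}(\und{a})+\mathbf{l}(\mathbf{s}))-\mathbf{l}(\und{b})$.

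The heart of the argument is the projection onto $\mathbb{K}_e$. First I would use $\pi_e\circ M_\sigma^{-1}=\beta^{-1}\pi_e$ (which holds because $M_\sigma$ commutes with $\pi_e$ and acts as $\beta$ on $\mathbb{K}_e$) to write $\pi_e(\mathbf{y})=\beta^{-1}(\pi_e(\mathbf{x})-\pi_e(\mathbf{l}(\mathbf{p})))$. Next, the elementary relation $M_\sigma\mathbf{e}_{b_i}=\mathbf{l}(\sigma(b_i))=\mathbf{l}(p_i)+\mathbf{e}_{a_i}+\mathbf{l}(s_i)$, projected by $\pi_e$ and summed over $i$, yields
\[
\beta\,\pi_e(\mathbf{l}(\und{b}))=\pi_e(\mathbf{l}(\mathbf{p}))+\pi_e(\mathbf{l}(\und{a}))+\pi_e(\mathbf{l}(\mathbf{s})).
\]
Rearranging gives the central identity
\[
\pi_e(\mathbf{y})+\pi_e(\mathbf{l}(\und{b}))=\beta^{-1}\bigl(\pi_e(\mathbf{x})+\pi_e(\mathbf{l}(\und{a}))+\pi_e(\mathbf{l}(\mathbf{s}))\bigr),
\]
from which both bounds fall out.

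The lower bound $\pi_e(\mathbf{y})\geq-\pi_e(\mathbf{l}(\und{b}))$ reduces to showing that the right-hand side of the identity is non-negative on $\mathbb{K}_e$: the combination $\pi_e(\mathbf{x})+\pi_e(\mathbf{l}(\und{a}))$ is non-negative by the hypothesis $\pi_e(\mathbf{x})\in[-\pi_e(\mathbf{l}(\und{a})),\mathbf{0})$, and $\pi_e(\mathbf{l}(\mathbf{s}))\geq\mathbf{0}$ because the Perron-Frobenius eigenvector $\mathbf{v}_\beta$ can be chosen with strictly positive entries, so $\pi_e$ sends non-negative integer vectors to non-negative multiples of $\mathbf{u}_\beta$. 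For the upper bound $\pi_e(\mathbf{y})<\mathbf{0}$, I would go back to $\pi_e(\mathbf{y})=\beta^{-1}(\pi_e(\mathbf{x})-\pi_e(\mathbf{l}(\mathbf{p})))$ directly: $\pi_e(\mathbf{x})<\mathbf{0}$ by hypothesis and $\pi_e(\mathbf{l}(\mathbf{p}))\geq\mathbf{0}$ by the same positivity argument.

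I do not anticipate a genuine obstacle here: the proposition is a bookkeeping exercise built on the eigenvector identity for $M_\sigma$ together with the positivity of the Perron-Frobenius left eigenvector. The only mild cares to take are interpreting $[-\pi_e(\mathbf{l}(\und{a})),\mathbf{0})$ as a half-open interval on the one-dimensional line $\mathbb{K}_e$ (naturally oriented by $\mathbf{u}_\beta$), and keeping straight the roles of the prefix tuple $\mathbf{p}$ and the suffix tuple $\mathbf{s}$ in the dual formula~\eqref{eq:Ekstar}.
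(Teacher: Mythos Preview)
Your proof is correct and follows essentially the same approach as the paper: both rely on the identity $M_\sigma\mathbf{l}(\und{b})=\mathbf{l}(\mathbf{p})+\mathbf{l}(\und{a})+\mathbf{l}(\mathbf{s})$ and the positivity of $\pi_e$ on non-negative integer vectors, with the paper packaging the two bounds as a single chain of interval equivalences while you treat them separately via the explicit formula $\pi_e(\mathbf{y})=\beta^{-1}(\pi_e(\mathbf{x})-\pi_e(\mathbf{l}(\mathbf{p})))$. Your explicit verification that $\mathbf{y}\in\mathbb{Z}^n$ is a small bonus the paper leaves implicit.
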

\begin{proof}
We show that an element $(\mathbf{y},\und{b})^*\in \{\mathbf{E}^*_{\bar n}(\sigma)(\mathcal{S}^*)\}$ is in $\mathcal{S}^*$. We have that $(\mathbf{y},\und{b})^*$ is of the form $(M_\sigma^{-1}(\mathbf{x}-\mathbf{l}(\und{p})),\und{b})^*$ for some $(\mathbf{x},\und{a})^*\in\mathcal{S}^*$ and $\sigma(\und{b}) = \mathbf{p}\und{a}\mathbf{s}$. We must show that
\begin{align}\label{eq:comp}
\pi_e(M_\sigma^{-1}(\mathbf{x}-\mathbf{l}(\mathbf{p}))) \in [-\pi_e(\mathbf{l}(\und{b})),\mathbf{0}) &\Longleftrightarrow \pi_e(\mathbf{x}-\mathbf{l}(\mathbf{p})) \in [-M_\sigma\pi_e(\mathbf{l}(\und{b})),\mathbf{0}) \\
&\Longleftrightarrow \pi_e(\mathbf{x}-\mathbf{l}(\mathbf{p})) \in [-\pi_e(\mathbf{l}(\sigma(\und{b}))),\mathbf{0}) \notag \\
&\Longleftrightarrow \pi_e(\mathbf{x}) \in [-\pi_e(\mathbf{l}(\sigma(\und{b}))-\mathbf{l}(\mathbf{p})),\pi_e\mathbf{l}(\mathbf{p})) \notag \\
&\Longleftrightarrow \pi_e(\mathbf{x}) \in [-\pi_e(\mathbf{l}(\und{a}\mathbf{s})),\pi_e\mathbf{l}(\mathbf{p})) \notag
\end{align}
and this is true since $\pi_e(\mathbf{x})\in [-\pi_e\mathbf{l}(\und{a}),\mathbf{0})\subseteq [-\pi_e(\mathbf{l}(\und{a}\mathbf{s})),\pi_e\mathbf{l}(\mathbf{p}))$.
\end{proof}

\begin{remark}
Note that the inclusion $\{\mathbf{E}^*_{\bar n}(\sigma)(\mathcal{S}^*)\} \supseteq \mathcal{S}^*$ is much more complicated to show since one needs to control the cancellation of faces originating from the wedge formalism.
\end{remark}

\begin{definition} 
The set of \emph{faces near to} $\mathbb{K}_c$ is $\mathcal{S}:=\{\varphi_{\bar n}(\mathcal{S}^*)\}$.
\end{definition}
It follows from Definition \ref{defgeodual} and Proposition~\ref{invS} that $\mathcal{S}$ is invariant under $\mathbf{E}^{d-1}(\sigma)$, i.e. $\{\mathbf{E}^{d-1}(\sigma)(\mathcal{S})\} \subseteq \mathcal{S}$.

There is no clear interpretation of $\mathcal{S}$ as a stepped surface. We will show in Theorem~\ref{thm:poly} that, under certain hypotheses, $\Gamma_\mathcal{U}\subset \mathcal{S}$ is a stepped surface. This set depends heavily on the starting $\mathcal{U}$.
Intuitively we have that the iterations of $\mathbf{E}^{d-1}(\sigma)$ on $\mathcal{U}$ filter some elements of $\mathcal{S}$ which, seen as colored points near to $\mathbb{K}_c$, generate a good translation set for a tiling, namely a \emph{Delone set}, i.e.~a uniformly discrete and relatively dense set.

\begin{remark}
In the irreducible settings, it is proven in \cite[Lemma~2, Lemma~3]{AI} that the stepped surface is invariant under $\mathbf{E}_1^*(\sigma)$ and that two distinct faces have disjoint images (as sets of faces). In our case, the invariance of $\mathcal{S}^*$ is clear by Proposition~\ref{invS}, however the following examples show that 
two distinct faces  outside of $\mathcal{S}^*$ as well as in $\mathcal{S}^*$ may fail to have disjoint images by $\mathbf{E}^*_{\bar n}(\sigma)$.

Consider $\sigma=\sigma_1$ of the family defined in Section~\ref{sec:main-ex}. Then
\[ (-M_\sigma^{-1}\mathbf{l}(11),1\wedge 2\wedge 4)^* \in \{\mathbf{E}^*_{3}(\mathbf{0},1\wedge 3 \wedge 5)^*\} \cap \{\mathbf{E}^*_{3}(\mathbf{0},2\wedge 3 \wedge 1)^*\}.   \]
since the images
\begin{align*}
 \mathbf{E}^*_{3}(\mathbf{0},1\wedge 3 \wedge 5)^* &= (-M_\sigma^{-1}\mathbf{l}(1),1\wedge 2\wedge 4)^* + (-M_\sigma^{-1}\mathbf{l}(11),1\wedge 2\wedge 4)^* \\ &\quad+ (-M_\sigma^{-1}\mathbf{l}(1),5\wedge 2\wedge 4)^*,  \\
  \mathbf{E}^*_{3}(\mathbf{0},2\wedge 3 \wedge 1)^* &=  (-M_\sigma^{-1}\mathbf{l}(11),1\wedge 2\wedge 4)^* + (-M_\sigma^{-1}\mathbf{l}(11),1\wedge 2\wedge 5)^*.
\end{align*}
Notice that $(\mathbf{0},1\wedge 3 \wedge 5)^*, (\mathbf{0},2\wedge 3 \wedge 1)^*\notin\mathcal{S}^*$.
On the other hand, if we consider $\mathbf{E}^2(\sigma)$ then 
\begin{align*}
 \mathbf{E}^2(\mathbf{0},2\wedge 4) &= (M_\sigma^{-1}\mathbf{l}(12),3\wedge 5) + (M_\sigma^{-1}\mathbf{l}(2),3\wedge 5) + (\mathbf{0},1\wedge 3)  \\
  \mathbf{E}^2(\mathbf{0},4\wedge 5) &=  (M_\sigma^{-1}\mathbf{l}(5),3\wedge 5) + (\mathbf{0},3\wedge 4)
\end{align*}
and the two images are disjoint. 

As a second example, we consider the elements $(-M_{\sigma}\mathbf{e}_4,1\wedge 3\wedge 4)^*$, $(-M_{\sigma}\mathbf{e}_4+\mathbf{e}_1,1\wedge 3\wedge 4)^*\in\mathcal{S}^*$. Then
\[ 
(-\mathbf{e}_4,1\wedge 2\wedge 3)^*\in \{\mathbf{E}^*_{3}(\sigma)(-M_{\sigma}\mathbf{e}_4,1\wedge 3\wedge 4)^*\} \cap \{\mathbf{E}^*_{3}(\sigma)(-M_{\sigma}\mathbf{e}_4+\mathbf{e}_1,1\wedge 3\wedge 4)^*\}, \] in other words, the two images are not disjoint.
\end{remark}

\subsection{The hypotheses}\label{sec:hyp}

The following definition will be fundamental to achieve the next results.

\begin{definition}
We say that $\sigma$ is a \emph{nice reducible substitution} if:
\begin{enumerate}
\item[(S1)] \label{i:inflsubd1} The element $\mathbf{E}^{d-1}(\sigma)(U)$ is geometric and projects well, for each face $U\in\mathcal{S}$.

\item[(S2)] \label{i:inflsubd2} For any two distinct faces $U_1$, $U_2 \in \mathcal{S}$ such that $U_1+U_2$ projects well onto $\mathbb{K}_c$, then $\{\mathbf{E}^{d-1}(\sigma)(U_1)\}$ and $\{\mathbf{E}^{d-1}(\sigma)(U_2)\}$ are disjoint and $\mathbf{E}^{d-1}(\sigma)(U_1 + U_2)$ projects well onto $\mathbb{K}_c$.

\item[(P)] \label{i:prim} 
The image by $\mathbf{E}^*_{\bar n}(\sigma)$ of a positive face is the sum of positive faces, and the matrix $M_{\bar n}^*=\bigwedge_{i=1}^{\bar n}\,{}^tM_\sigma$ is primitive.

\item[(N)] \label{i:neutral} The neutral polynomial $g(x)$ has only simple roots of modulus one and $g(0)=1$. 
\end{enumerate}
\end{definition}

(S1) and (S2) will assure that $\mathbf{E}^{d-1}(\sigma)$ satisfies an inflation-and-subdivision rule (see Proposition~\ref{subrule}) and play a prominent role in Theorem~\ref{thm:poly}. 

(P) is the concept of {\it positivity} introduced in \cite{AFHI} together with the primitivity of the matrix. It is thoroughly explained in Section~\ref{sec:canc} and implies, in particular, that $\mathcal{O}_{\bar n}$ is a convenient orientation for $M^*_{\bar n}$ to be non-negative and that no cancellation occurs neither for $\mathbf{E}^*_{\bar n}(\sigma)$ nor for $\mathbf{E}^{d-1}(\sigma)$.

(N) is an algebraic assumption on the neutral polynomial so that the dominant eigenvalue of $|M_{d-1}|$ is the Pisot root $\beta$ (see Lemma~\ref{le:modone}). 

\medskip

\noindent {\bf Main example.}
The substitutions of the family considered in Section~\ref{sec:main-ex} are all nice reducible substitutions.

\begin{proposition}\label{prop:nice}
For each $t$, $\sigma_t$ is a nice reducible substitution.
\end{proposition}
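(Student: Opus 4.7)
My plan is to verify the four conditions (N), (P), (S1) and (S2) in turn, handling the algebraic ones first and then tackling the two geometric conditions together.

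Condition (N) is immediate: the neutral polynomial is $g(x)=x^2-x+1$, whose roots are the primitive sixth roots of unity; these are simple and of modulus one, and $g(0)=1$. For (P), primitivity of $M_{\bar n}^{*}={}^{t}M_{\sigma_t}\wedge{}^{t}M_{\sigma_t}\wedge{}^{t}M_{\sigma_t}$ follows from the identification with $|M_2(\sigma_t)|$ already recorded in Section~\ref{sec:main-ex}: one shows that for a suitable fixed power $k$ (independent of $t$, since the pattern of nonzero entries of $M_2(\sigma_t)$ is the same for every $t\ge 0$), all entries of $(M_{\bar n}^{*})^{k}$ are positive. For the positivity of $\mathbf{E}^{*}_{\bar n}(\sigma_t)$, I would produce an explicit sign $\epsilon\colon\mathcal{O}_{2}\to\{\pm 1\}$ so that after reorienting each generator $\und{a}^{*}$ by $\epsilon_{\und{a}}$, every entry of the resulting matrix becomes non-negative. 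Reading the signs off equation~\eqref{eq:E2sigmat}, I would take $\epsilon_{1\wedge 2}=\epsilon_{1\wedge 4}=\epsilon_{2\wedge 3}=\epsilon_{2\wedge 5}=\epsilon_{3\wedge 4}=\epsilon_{4\wedge 5}=+1$ and $\epsilon_{1\wedge 3}=\epsilon_{1\wedge 5}=\epsilon_{2\wedge 4}=\epsilon_{3\wedge 5}=-1$, and then verify the ten sign relations $\epsilon_{\und{b}}\,(M_{2})_{\und{b},\und{a}}\,\epsilon_{\und{a}}\ge 0$ one by one using~\eqref{eq:E2sigmat}; the observation after~\eqref{eq:MkRel} that faces of a given type share a sign inside each image guarantees that this check is well defined for every $t$ at once. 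In particular, the signs do not depend on $t$, because the $t$-dependent parts of~\eqref{eq:E2sigmat} are sums of faces of a single type.

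Once (P) is established, the ``geometric'' part of (S1) and (S2) is free: positivity prevents any cancellation in $\mathbf{E}^{*}_{\bar n}(\sigma_t)$, hence also in $\mathbf{E}^{d-1}(\sigma_t)$, so $\mathbf{E}^{d-1}(\sigma_t)(U)$ has all coefficients in $\{-1,+1\}$ for every face $U\in\mathcal{S}$, and similarly for sums $U_{1}+U_{2}$. It remains to prove that the images project well onto $\mathbb{K}_c$. For (S1), I would group the faces appearing in $\mathbf{E}^{2}(\sigma_t)(\mathbf{0},\und{a}^{*})$ according to their types as listed in~\eqref{eq:E2sigmat}: inside each such image, faces of the same type have base points differing by $M_{\sigma_t}^{-1}\mathbf{e}_{1}$, so their $\pi_c$-projections are translates by $M_{\sigma_t}^{-1}\pi_c(\mathbf{e}_{1})$ of a single fundamental parallelogram, and this translation vector is shorter (in the contracting eigen-coordinates) than the side lengths of that parallelogram, which forces mutual disjointness of their interiors; between faces of different types inside the same image, disjointness is checked by a finite case distinction on the pairs of types actually produced. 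This analysis is uniform in $t$, since the translation vectors depend on $t$ only through the factor $M_{\sigma_t}^{-1}$ and the pattern of types is independent of $t$. For (S2), one has to compare images of two distinct faces $U_{1},U_{2}\in\mathcal{S}$ that already project well. Using translation equivariance it is enough to treat the case when one of them is based at $\mathbf{0}$; the two projected image patches are then obtained by adding a vector of $\pi_c(\mathbb{Z}^{n})$ to the first, and I would argue that this vector is always outside the (small) projected support of $\mathbf{E}^{2}(\sigma_t)(U_{1})$ because $U_{1}+U_{2}$ projects well and $\pi_c$ is contracted by $M_{\sigma_t}^{-1}$.

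The main obstacle will be the ``projects well'' and ``disjoint images'' clauses of (S1) and (S2); these are genuinely geometric statements and not consequences of positivity alone. I expect that the cleanest way to package the case-by-case analysis is to observe that the ten images in~\eqref{eq:E2sigmat} are, up to a permutation of the labels $\{2,3,4,5\}$, built from only two combinatorial patterns (the ones beginning with $\und{a}=2\wedge 5$ and with $\und{a}=4\wedge 5$), so that only these two patterns need to be checked in detail; the remaining cases follow by symmetry, and the uniformity in $t\ge 0$ is preserved because the $t$-dependent sums contribute additional faces of a single type lined up along $\pi_c(\mathbf{e}_{1})$.
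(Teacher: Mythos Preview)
Your treatment of (N) and (P) is correct and matches the paper's. The gap is in (S2).

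Your argument for (S2) is that, after normalising $U_{1}$ to be based at $\mathbf{0}$, the image patches differ by a translate and ``this vector is always outside the (small) projected support of $\mathbf{E}^{2}(\sigma_t)(U_{1})$ because $U_{1}+U_{2}$ projects well and $\pi_c$ is contracted by $M_{\sigma_t}^{-1}$''. First, $M_{\sigma_t}^{-1}$ is an \emph{expansion} on $\mathbb{K}_c$, not a contraction. More importantly, the inflate-and-subdivide rule is \emph{imperfect}: $\pi_c(\overline{\mathbf{E}^{2}(\sigma_t)(U)})$ has the same measure as $M_{\sigma_t}^{-1}\pi_c(\overline{U})$ but not the same shape --- its boundary is governed by $\mathbf{E}^{1}(\sigma_t)(\partial U)$ and can bulge out of the inflated parallelogram. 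Hence two faces whose projections merely touch (or are close) can, in principle, have overlapping images; projecting well before says nothing automatic about projecting well after. Your abstract argument cannot close this gap, and the claimed label-permutation symmetry among $\{2,3,4,5\}$ does not exist for $\sigma_t$, so it cannot be used to reduce the casework either.

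The paper handles this by turning (S2) into a genuinely finite check: one computes an explicit radius
\[
R=\frac{2\max_{\mathbf{s}:\sigma(\underline{a})=\mathbf{p}\underline{b}\mathbf{s}}\lVert\pi_c(\mathbf{l}(\mathbf{s}))\rVert}{1-\lVert\beta\rVert_c}
\]
so that any two faces in $\mathcal{S}$ whose base points are farther than $R$ apart in $\pi_c$ have images with disjoint supports (here the expansion of $M_{\sigma_t}^{-1}$ works in your favour), and then one verifies (S2) by direct computation for the finitely many pairs of faces of $\mathcal{S}$ inside $B(\mathbf{0},R)$, using that the base points form a Delone set. This reduction to a bounded window is the missing idea in your sketch.
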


\begin{proof}
For each $\sigma_t$ the polynomial $g(x)=x^2-x+1$, thus (N) is true. 
We have that $\mathbf{E}^*_{3}(\sigma)$ is positive and $M^*_3=\wedge^3 \,({}^t M_\sigma)$ is primitive.

For (S1) we can look at the definition of $\mathbf{E}^2(\sigma_t)$ and check that the images of each face are geometric and project well. 
For (S2) we must check that for every pairs of possible neighboring faces $U_1,U_2$ then $\mathbf{E}^2(\sigma_t)(U_1+U_2)$ is geometric and projecting well.
This can be readily done for two neighboring faces centered at $\mathbf{0}$ using again the definition of $\mathbf{E}^2(\sigma_t)$. On the other hand there could be faces $U_1, U_2$ such that their supports intersect in only one point or whose supports are disjoint but with $\overline{\mathbf{E}^2(\sigma_t)(U_1)} \cap \overline{\mathbf{E}^2(\sigma_t)(U_2)} \neq \emptyset$.
For this reason, we must check this disjointness condition for all pair of faces near to $\mathbb{K}_c$, i.e.~in $\mathcal{S}$, inside the ball $B(\mathbf{0},R)$ where 
\[ R = \frac{2 \max_{\mathbf{s} : \sigma(\underline{a}) =\mathbf{p}\underline{b}\mathbf{s}}\{\pi_c(\mathbf{l}(\mathbf{s})) \}}{1 - \lVert\beta\rVert_c}  \]
where $\lVert\beta\rVert_c = \max\{|\beta'|: \beta' \text{ Galois conjugate of }\beta\}$. Indeed, two faces whose base points are at a distance greater than $R$ will have necessarily images with disjoints supports. Notice that $B(\mathbf{0},R)\cap\{\pi_c(\mathbf{x}):  (\mathbf{x},\und{a})\in \mathcal{S}\}$ is finite since $\{\pi_c(\mathbf{x}):  (\mathbf{x},\und{a})\in \mathcal{S}\}$ is a Delone set. Thus we have to check only finitely many possibilities. One can check by computation that, for each $\sigma_t$, $\mathbf{E}^2(\sigma_t)(U_1+U_2)$ is geometric and projects well for any two geometric and projecting well $U_1,U_2$.
\end{proof}

\subsection{The inflate-and-subdivide rule}\label{sec:infsub}

Define the absolute value of a matrix $M=(m_{ij})$ to be the matrix $|M|=(|m_{ij}|)$. 

\begin{lemma}\label{le:modone}
If $\sigma$ is a nice reducible substitution then the dominant eigenvalue of $|M_{d-1}|$ is $\beta$. 
\end{lemma}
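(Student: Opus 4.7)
The plan is to reduce the statement to a Perron-Frobenius calculation for the matrix $M^*_{\bar n}=\bigwedge^{\bar n}{}^tM_\sigma$, using positivity to identify $|M_{d-1}|$ with $M^*_{\bar n}$ and the neutral hypothesis (N) to locate the dominant eigenvalue.

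First I would observe that, by \eqref{eq:MkRel},
\[ (M_{d-1})_{\und{b}^*,\und{a}^*} = (-1)^{\und{a}+\und{b}}(M^*_{\bar n})_{\und{b},\und{a}} \]
for every $\und{a},\und{b}\in\mathcal{O}_{\bar n}$. Hypothesis (P) says that the image by $\mathbf{E}^*_{\bar n}(\sigma)$ of any positive face is a sum of positive faces, hence $M^*_{\bar n}$ is non-negative. Therefore $|(M_{d-1})_{\und{b}^*,\und{a}^*}| = (M^*_{\bar n})_{\und{b},\und{a}}$, so that $|M_{d-1}|$ and $M^*_{\bar n}$ coincide as matrices (after the natural identification of the index sets $\mathcal{O}_{\bar n}$ and $\{\und{a}^*:\und{a}\in\mathcal{O}_{\bar n}\}$). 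In particular they share the same spectrum, and by (P) the matrix $M^*_{\bar n}$ is primitive; by Perron-Frobenius it admits a unique simple eigenvalue of maximal modulus, which is real positive.

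Next I would identify this Perron eigenvalue. Since $M^*_{\bar n}=\bigwedge^{\bar n}{}^tM_\sigma$, its eigenvalues are the products $\lambda_{i_1}\cdots\lambda_{i_{\bar n}}$ over $\bar n$-subsets $\{i_1<\cdots<i_{\bar n}\}$ of the spectrum of $M_\sigma$. The characteristic polynomial of $M_\sigma$ factors as $f(x)g(x)$, where $f$ supplies the Pisot root $\beta>1$ together with its $d-1$ Galois conjugates of modulus strictly less than $1$. Hypothesis (N) forces $g$ to have $n-d$ simple roots, all of modulus $1$, with $g(0)=1$; writing these roots as $\alpha_1,\dots,\alpha_{n-d}$ gives $\alpha_1\cdots\alpha_{n-d}=(-1)^{n-d}g(0)=(-1)^{n-d}$. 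The maximum of $|\lambda_{i_1}\cdots\lambda_{i_{\bar n}}|$ over all $\binom{n}{\bar n}$ choices is thus attained precisely by selecting the $\bar n=n-d+1$ eigenvalues of largest modulus, namely $\beta$ together with all the neutral eigenvalues $\alpha_1,\dots,\alpha_{n-d}$; every other $\bar n$-product involves at least one contracting Galois conjugate of $\beta$ and has modulus $<\beta$. The corresponding product is $\beta\cdot\prod_j\alpha_j=\pm\beta$, so the largest modulus of an eigenvalue of $M^*_{\bar n}$ equals $\beta$.

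Combining the two steps, the Perron-Frobenius eigenvalue of the primitive non-negative matrix $|M_{d-1}|=M^*_{\bar n}$ is real positive of modulus $\beta$, hence equals $\beta$. The main delicate point in carrying this out rigorously is the bookkeeping in the second step: one must check that hypothesis (N), in particular $g(0)=1$ and the simplicity of the roots, really forces the top eigenvalue of the wedge to be $\beta$ rather than a different modulus, and that no other $\bar n$-product can match this modulus. The hypothesis $g(0)=1$ is precisely what prevents the neutral contribution from shifting the dominant value away from $\beta$; simplicity of the neutral roots ensures that the wedge eigenvalues involved are genuine, so the counting of $\bar n$-subsets is unambiguous.
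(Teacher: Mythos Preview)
Your proposal is correct and follows essentially the same route as the paper's proof: identify $|M_{d-1}|$ with $M^*_{\bar n}=\bigwedge^{\bar n}{}^tM_\sigma$ via \eqref{eq:MkRel} and (P), then use that the wedge eigenvalues are $\bar n$-fold products of eigenvalues of $M_\sigma$ and that the maximal-modulus product is $\beta$ times all neutral roots. You are in fact slightly more careful than the paper on one point: you note that $\prod_j\alpha_j=(-1)^{n-d}g(0)=\pm1$ and then invoke Perron--Frobenius positivity to force the dominant eigenvalue to be $+\beta$, whereas the paper simply writes $\beta\prod_i\zeta_i=\beta$; your formulation handles the sign cleanly without altering the argument.
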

\begin{proof}
We deduce from~(\ref{eq:MkRel}) and from (P) that 
\[ |M_{d-1}|=\left|\bigwedge_{i=1}^{\bar n}{}^tM_\sigma\right|=\bigwedge_{i=1}^{\bar n}{}^tM_\sigma = M^*_{\bar n}. \] 
The eigenvalues of $\bigwedge_{i=1}^{\bar n}{}^tM_\sigma$ are the products of $\bar n$ distinct eigenvalues of $M_\sigma$, counted with multiplicity. Since $M^*_{\bar n}$ is primitive, it has a dominant eigenvalue, which, by (N), is $\beta\prod_i \zeta_i=\beta$, where the $\zeta_i$ are the $n-d$ roots of the unimodular neutral polynomial $g(x)$. Indeed, all other products of $\bar n$ distinct eigenvalues of $M_\sigma$ are less than $\beta$, since they would include one Galois conjugate of the Pisot number $\beta$.
\end{proof}

\begin{lemma}\label{lem:meseigenvec}
If $\sigma$ is a nice reducible substitution then the vector $(\lambda(\pi_c\ov{(\mathbf{0},\und{a})}))_{\und{a}\in\mathcal{O}_{d-1}}$ is an eigenvector of $|{}^tM_{d-1}|$ for the eigenvalue $\beta$.
\end{lemma}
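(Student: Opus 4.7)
The plan is to derive the eigenvalue identity $|{}^tM_{d-1}|\,v=\beta\,v$, where $v=(v_{\und{a}})_{\und{a}\in\mathcal{O}_{d-1}}$ with $v_{\und{a}}:=\lambda(\pi_c\overline{(\mathbf{0},\und{a})})$, by computing the Lebesgue measure $\lambda\bigl(\pi_c\,\overline{\mathbf{E}^{d-1}(\sigma)(\mathbf{0},\und{a})}\bigr)$ in two complementary ways and equating the results.

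\medskip

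\emph{Step 1 (combinatorial expression).} Fix $\und{a}\in\mathcal{O}_{d-1}$. By (S1), $\mathbf{E}^{d-1}(\sigma)(\mathbf{0},\und{a})$ is geometric and its faces project onto $\K_c$ with pairwise disjoint interiors; by (P), all faces of a given type $\und{b}$ in this image carry the same orientation, so their number equals $|M_{d-1}|_{\und{b},\und{a}}$. Since $\lambda(\pi_c\overline{(\mathbf{x},\und{b})})=v_{\und{b}}$ for every $\mathbf{x}$, summing yields
\[
\lambda\bigl(\pi_c\,\overline{\mathbf{E}^{d-1}(\sigma)(\mathbf{0},\und{a})}\bigr)\;=\;\sum_{\und{b}\in\mathcal{O}_{d-1}}|M_{d-1}|_{\und{b},\und{a}}\,v_{\und{b}}\;=\;\bigl(|{}^tM_{d-1}|\,v\bigr)_{\und{a}}.
\]

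\emph{Step 2 (geometric expression).} The formula~\eqref{eq:Edm1} is rigged so that the projections of the faces of $\mathbf{E}^{d-1}(\sigma)(\mathbf{0},\und{a})$ tile $M_\sigma^{-1}\pi_c\overline{(\mathbf{0},\und{a})}$ up to boundaries of Lebesgue measure zero; this inflate-and-subdivide property is the content of Proposition~\ref{subrule}, which itself relies only on (S1), (S2). Combining this with the contraction identity~\eqref{eq:contrac} gives
\[
\lambda\bigl(\pi_c\,\overline{\mathbf{E}^{d-1}(\sigma)(\mathbf{0},\und{a})}\bigr)\;=\;\lambda\bigl(M_\sigma^{-1}\pi_c\overline{(\mathbf{0},\und{a})}\bigr)\;=\;\beta\,v_{\und{a}}.
\]
Equating with Step~1 yields $\bigl(|{}^tM_{d-1}|\,v\bigr)_{\und{a}}=\beta\,v_{\und{a}}$ for every $\und{a}\in\mathcal{O}_{d-1}$, which is the claim.

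\emph{Main obstacle.} The delicate point is justifying Step~2 without circular use of Proposition~\ref{subrule}. A self-contained substitute proceeds via exterior algebra: identify $\bigwedge^{d-1}\K_c\cong\R$ by fixing an orientation, and set $w_{\und{a}}:=\pi_c\mathbf{e}_{a_1}\wedge\cdots\wedge\pi_c\mathbf{e}_{a_{d-1}}$, so that $|w_{\und{a}}|=v_{\und{a}}$. Since $\pi_c$ commutes with $M_\sigma$, and $\bigwedge^{d-1}(M_\sigma|_{\K_c})$ is scalar multiplication by $\det(M_\sigma|_{\K_c})=\pm\beta^{-1}$ (immediate from (N), unit-ness of $\beta$, and the splitting $\R^n=\K_e\oplus\K_c\oplus\K_n$), expanding along the basis $\{\mathbf{e}_j\}$ gives ${}^tB_{d-1}\,w=\pm\beta^{-1}\,w$. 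Remark~\ref{rmk:matrixconj} combined with (P) identifies $|{}^tM_{d-1}|$ with $\bigwedge^{\bar n}M_\sigma$ (via the complementation $\und{a}\leftrightarrow\und{a}^*$), and Jacobi cofactor duality together with $\det(M_\sigma)=\pm1$ transports $w$ into a $\pm\beta$-eigenvector of $|{}^tM_{d-1}|$ whose components have absolute value $v_{\und{a}^*}$. Primitivity of $|{}^tM_{d-1}|$ from (P) and the spectral radius computation of Lemma~\ref{le:modone} finally let Perron–Frobenius force the sign to $+$, identifying this eigenvector (up to a global sign) with $v$.
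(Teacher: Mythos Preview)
Your Steps~1--2 are indeed circular: in the paper, Proposition~\ref{subrule} invokes Lemma~\ref{lem:meseigenvec} precisely for the equality you quote in Step~2, so that route is unavailable. You saw this yourself; everything hinges on your ``self-contained substitute'' paragraph.

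That substitute carries the right idea --- encode the face volumes as signed determinants, show the signed vector is an eigenvector of a matrix built from exterior powers of $M_\sigma$, then invoke Perron--Frobenius (via Lemma~\ref{le:modone} and primitivity from (P)) to pass to absolute values --- and this is exactly the strategy of the paper's proof. The difference is in the bookkeeping. You work inside the $1$-dimensional space $\bigwedge^{d-1}\K_c$, obtaining that $w$ is a $\pm\beta^{-1}$-eigenvector of ${}^tB_{d-1}=\bigwedge^{d-1}{}^tM_\sigma$, and then need a Jacobi complementary-minor identity to transport this to a $\beta$-eigenvector of $|{}^tM_{d-1}|$ under $\und{a}\leftrightarrow\und{a}^*$. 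That transport is correct in principle (Jacobi's theorem relates $\bigwedge^{d-1}M_\sigma$ to $\bigwedge^{\bar n}M_\sigma^{-1}$, unimodularity of $M_\sigma$ absorbs the determinant, and the sign pattern matches \eqref{eq:MkRel}), but it is only sketched and the sign/index tracking is delicate.

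The paper bypasses this extra step by choosing the determinant in $\R^n$ rather than in $\K_c$: it writes $\lambda(\pi_c\overline{(\mathbf{0},\und{a})})$ as a constant times $|\det(\mathbf{u}_1,\mathbf{u}_{d+1},\ldots,\mathbf{u}_n,\mathbf{e}_{a_1},\ldots,\mathbf{e}_{a_{d-1}})|$, where $\mathbf{u}_1,\mathbf{u}_{d+1},\ldots,\mathbf{u}_n$ span $\K_e\oplus\K_n$. Multiplying the first $\bar n$ columns by $M_\sigma$ contributes a factor $\beta\cdot\prod_i\zeta_i=\beta$ (this is where (N), specifically $g(0)=1$, is used), and a single Laplace/complementary-minor expansion (the paper's Lemma~A.1) immediately identifies the signed-determinant vector as a $\beta$-eigenvector of ${}^tM_{d-1}$ itself --- no separate Hodge/Jacobi step is needed. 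So your substitute and the paper's argument are two realizations of the same mechanism, with the paper's choice of determinant landing directly on the target matrix.
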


Note that a similar result was obtained in the irreducible case (see~\cite[p.197]{AI} or~\cite[Lemma 2.3]{IR06}).  The proof for the reducible case is more technical, it can be found in the Appendix. 
\medskip

We introduce now the important concept of inflate-and-subdivide rule. This is usually defined in the context of tiling spaces, which is slightly different than ours in terms of notations. 
A {\it tile} in $\mathbb{R}^k$ is defined as a pair $T=(F,t)$ where $F=\text{supp}(T)$ (the support of $T$) is a compact set in $\mathbb{R}^k$ which is the closure of its interior, and $t$ is the type of $T$. We deal with particular tiles, namely only with faces $(\mathbf{x},\und{a})$ whose support is denoted by $\overline{(\mathbf{x},\und{a})}$ and whose type is $\und{a}$. The following is the definition of inflate-and-subdivide rule adapted to our notations.

\begin{definition}\label{def:iasimp}
Let $\alpha$ be an expanding linear transformation of $\R^k$ (all its eigenvalues are greater than one in modulus). A geometric dual map $\omega$ acting on a subset of $C_k$ induces an \emph{inflate-and-subdivide rule} on $\mathbb{R}^k$ with expansion $\alpha$ if $\pi_k(\overline{\omega(P)}) = \alpha(\pi_k(\overline{P}))$ for every geometric element $P$ which projects well onto $\mathbb{R}^k$, and where $\pi_k$ denotes a projection of $\mathbb{R}^n$ to $\mathbb{R}^k$. It induces an \emph{imperfect inflate-and-subdivide rule} with expansion $\alpha$ if $\pi_k(\overline{\omega(P)}) \neq \alpha(\pi_k(\overline{P}))$ but $\lambda(\pi_k(\overline{\omega(P)})) = \lambda(\alpha(\pi_k(\overline{P})))$. 
\end{definition}

It is clear that $\mathbf{E}_1(\sigma)$ induces an inflate-and-subdivide rule on $\mathbb{K}_e$ with expansion $M_\sigma|_{\mathbb{K}_e}$.
The following proposition states that the geometric dual map $\mathbf{E}^{d-1}(\sigma)$ induces an imperfect inflate-and-subdivide rule on $\mathbb{K}_c$ with expansion $M_\sigma^{-1}|_{\mathbb{K}_c}$. 

\begin{proposition}\label{subrule}
If $\sigma$ is a nice reducible substitution then $\mathbf{E}^{d-1}(\sigma):\mathcal{S}\to \mathcal{S}$ induces an imperfect inflate-and-subdivide rule on $\K_c$ with expansion $\alpha = M_\sigma^{-1}|_{\mathbb{K}_c}$:
\[  \lambda(\pi_c\,\overline{\mathbf{E}^{d-1}(\sigma)(P)}) = \lambda(M_\sigma^{-1}\pi_c(\overline{P})) = \beta\,\lambda(\pi_c(\overline{P})) \]
holds for every geometric element  $P\subset\mathcal{S}$. 
\end{proposition}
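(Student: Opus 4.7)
The plan is to reduce the statement to the case of a single face, where the measure computation becomes an application of Lemma~\ref{lem:meseigenvec}, and then extend by additivity using the disjointness guaranteed by (S2). The second equality in the displayed formula is immediate from~\eqref{eq:contrac}, since $M_\sigma^{-1}$ acts on $\mathbb{K}_c$ with Jacobian $\beta$, so I focus on the first equality.

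First I would reduce to a single face. Let $P=\sum_{l=1}^{N} n_l(\mathbf{x}_l,\und{a}_l^*)\in\mathcal{S}$ project well onto $\mathbb{K}_c$. Applying (S2) inductively to the faces of $P$ shows that the sets $\{\mathbf{E}^{d-1}(\sigma)(\mathbf{x}_l,\und{a}_l^*)\}$ are pairwise disjoint and that $\mathbf{E}^{d-1}(\sigma)(P)$ itself projects well. Therefore both
\[ \lambda\bigl(\pi_c\,\overline{P}\bigr)=\sum_{l=1}^{N}\lambda\bigl(\pi_c\,\overline{(\mathbf{x}_l,\und{a}_l^*)}\bigr), \qquad \lambda\bigl(\pi_c\,\overline{\mathbf{E}^{d-1}(\sigma)(P)}\bigr)=\sum_{l=1}^{N}\lambda\bigl(\pi_c\,\overline{\mathbf{E}^{d-1}(\sigma)(\mathbf{x}_l,\und{a}_l^*)}\bigr) \]
hold. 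Hence it is enough to prove the identity $\lambda\bigl(\pi_c\,\overline{\mathbf{E}^{d-1}(\sigma)(\mathbf{x},\und{a}^*)}\bigr)=\beta\,\lambda\bigl(\pi_c\,\overline{(\mathbf{x},\und{a}^*)}\bigr)$ for a single face $(\mathbf{x},\und{a}^*)\in\mathcal{S}$.

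Next, I would expand the single-face image using the definitions. By hypothesis (P), positivity ensures that all the faces appearing in $\mathbf{E}^*_{\bar n}(\sigma)(\mathbf{x},\und{a})^*$, and hence in $\mathbf{E}^{d-1}(\sigma)(\mathbf{x},\und{a}^*)$, carry the same sign, so no combinatorial cancellation occurs in the wedge formalism; by (S1) the supports of these faces have pairwise disjoint interiors after projection onto $\mathbb{K}_c$. Consequently the number of faces of type $\und{b}^*$ occurring in $\mathbf{E}^{d-1}(\sigma)(\mathbf{0},\und{a}^*)$ equals $|(M_{d-1})_{\und{b}^*,\und{a}^*}|$, and using translation-invariance of Lebesgue measure on $\mathbb{K}_c$,
\[ \lambda\bigl(\pi_c\,\overline{\mathbf{E}^{d-1}(\sigma)(\mathbf{x},\und{a}^*)}\bigr) =\sum_{\und{b}^*\in\mathcal{O}_{d-1}}\bigl|(M_{d-1})_{\und{b}^*,\und{a}^*}\bigr|\,\lambda\bigl(\pi_c\,\overline{(\mathbf{0},\und{b}^*)}\bigr). \]

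Finally I would invoke Lemma~\ref{lem:meseigenvec}. Writing $v_{\und{c}}\coloneqq\lambda(\pi_c\,\overline{(\mathbf{0},\und{c})})$ for $\und{c}\in\mathcal{O}_{d-1}$, the lemma states $|{}^{t}M_{d-1}|\,v=\beta\,v$, i.e., $\sum_{\und{b}^*}|(M_{d-1})_{\und{b}^*,\und{a}^*}|\,v_{\und{b}^*}=\beta\,v_{\und{a}^*}$. Substituting this into the previous display and using translation-invariance again to replace $v_{\und{a}^*}$ by $\lambda(\pi_c\,\overline{(\mathbf{x},\und{a}^*)})$ yields the desired factor of $\beta$. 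Summing over the faces of $P$ and combining with~\eqref{eq:contrac} completes the proof.

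The only real subtlety is the bookkeeping at the very beginning: one must check that the positivity statement (P), which is originally phrased on $\mathbf{E}^*_{\bar n}(\sigma)$, transfers via~\eqref{eq:MkRel} to the assertion that the number of $\und{b}^*$-faces in $\mathbf{E}^{d-1}(\sigma)(\mathbf{0},\und{a}^*)$ is exactly $|(M_{d-1})_{\und{b}^*,\und{a}^*}|$ without further sign cancellation across different base points. This is precisely why hypothesis (P) is imposed; once it is in place, the rest of the argument is a direct eigenvector computation, explaining why the inflate-and-subdivide rule is only imperfect (the supports may differ as sets) but perfect on the level of Lebesgue measure.
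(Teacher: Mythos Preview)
Your argument is correct and follows essentially the same route as the paper: reduce to a single face, use (S1) and (P) to count the faces of each type as $|(M_{d-1})_{\und{b},\und{a}}|$ without cancellation, apply Lemma~\ref{lem:meseigenvec} to obtain the factor $\beta$, and then extend additively to a general well-projecting $P$ via (S2). The only cosmetic difference is that the paper phrases the additivity step by checking pairs $U_1,U_2$ rather than doing an explicit induction, but the content is identical.
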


\begin{proof}
We start to show that $\lambda(\pi_c\,\overline{\mathbf{E}^{d-1}(\sigma)(\mathbf{0},\und{a})}) = \lambda(M_\sigma^{-1}\pi_c\overline{(\mathbf{0},\und{a})})$ for all the faces $(\mathbf{0},\und{a})$.
By (S1) and (P), the element $\mathbf{E}^{d-1}(\sigma)(\mathbf{0},\und{a})$ is geometric and contains exactly $|(M_{d-1})_{\und{b},\und{a}}|=|({}^t M_{d-1})_{\und{a},\und{b}}|$ faces of type $\und{b}$. Since this element even projects well, we have that 
\begin{align}
\lambda(\pi_c\,\overline{\mathbf{E}^{d-1}(\sigma)(\mathbf{0},\und{a})}) &= \sum_{\und{b}\in\mathcal{O}_{d-1}} |({}^t M_{d-1})_{\und{a},\und{b}}|\, \lambda(\pi_c\overline{(\mathbf{0},\und{b})}) \nonumber \\
&=  \beta\,\lambda(\pi_c\overline{(\mathbf{0},\und{a})}) \label{infl} \\
&= \lambda(M_\sigma^{-1}\,\pi_c \overline{(\mathbf{0},\und{a})}). \nonumber
\end{align}
In the first equality, we used that the projection of a face of type $\und{b}$ has measure $\lambda(\pi_c\overline{(\mathbf{0},\und{b})})$, independently of its base point.
The second equality follows from Lemma~\ref{lem:meseigenvec} and the third equality from~\eqref{eq:contrac}. 

Let $U_1$, $U_2$ be two faces of a geometric element $P\subset\mathcal{S}$. Then we have
\begin{align*} \lambda(\pi_c(\overline{\mathbf{E}^{d-1}(\sigma)(U_1 + U_2)})) &=  \lambda(\pi_c(\overline{\mathbf{E}^{d-1}(\sigma)(U_1)})) +  \lambda(\pi_c(\overline{\mathbf{E}^{d-1}(\sigma)(U_2)})) \\
&= \lambda(M_\sigma^{-1}\pi_c(\overline{U_1})) + \lambda(M_\sigma^{-1}\pi_c(\overline{U_2})) \\
&= \beta\,\lambda(\pi_c(\overline{U_1+U_2}))
\end{align*}
where the first equality is true by (S2), the second by \eqref{infl} and the third by \eqref{eq:contrac}. Since this is true for any couple of faces appearing in $P$ we have shown the result.
\end{proof}

\begin{remark} \label{subdiv}
The procedure of inflation and subdivision is described precisely by the formula
\begin{equation}\label{eq:twoduals}
\mathbf{E}^{d-2}(\sigma)\circ\partial = \partial\circ\mathbf{E}^{d-1}(\sigma). 
\end{equation} 
See Figure~\ref{fig:inflsub} for a visualization with $d=3$. The support of a geometric element $\pi_c\overline{P}$ gets inflated by $M_\sigma^{-1}$. Every inflated $(d-2)$-dimensional face $F$ of $\pi_c(\overline{\partial P})$ is replaced by $\pi_c(\overline{\mathbf{E}^{d-2}(F)})$, which has the same endpoints as $\pi_c(\overline{\partial F})$. Notice that cancellation can happen since the matrix $M_{d-2}$ of $\mathbf{E}^{d-2}(\sigma)$ can have negative entries. By \eqref{eq:twoduals} the new boundary of the inflated support equals $\ov{\partial\mathbf{E}^{d-1}(\sigma)(P)}$, which means that the support obtained with the procedure described above can be subdivided in projected supports of elementary faces, and these are exactly those that we get applying $\mathbf{E}^{d-1}(\sigma)$ on $P$.
\end{remark}

\begin{figure}[h]
\includegraphics[scale=.4]{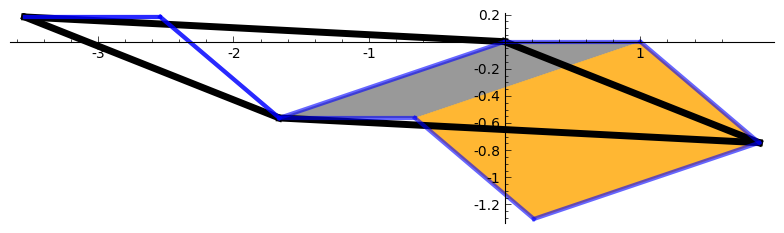}
\caption{The inflated polygon $M_\sigma^{-5}\pi_c\ov{(\mathbf{0},1\wedge 2)}$ (black thick line) and $\pi_c(\ov{\mathbf{E}^2(\sigma_0)^5(\mathbf{0},1\wedge 2)})$ for the Hokkaido substitution $\sigma_0$ (see Section~\ref{sec:hok}). The stepped lines (blue lines) are obtained by applying $\mathbf{E}^1(\sigma_0)^5$ to the boundary of $(\mathbf{0},1\wedge 2)$.}\label{fig:inflsub}
\end{figure}

\subsection{Geometric finiteness property}\label{sec:geomfinprop}
We do not know whether $\pi_c(\ov{\Gamma_\mathcal{U}})$ covers the entire representation space $\mathbb{K}_c$. This motivates the next important definition.

\begin{definition}\label{def:geomprop}
We say that $\sigma$ satisfies the \emph{geometric finiteness property} for a geometric well-projecting $\mathcal{U}\in C_{d-1}$ if there exists an integer $m\geq 1$ such that $\{\mathcal{U} \}\subseteq \{\mathbf{E}^{d-1}(\sigma)^m(\mathcal{U})\}$ and $\pi_c(\ov{\Gamma_\mathcal{U}})$ is a covering of $\mathbb{K}_c$.
\end{definition}

Geometric finiteness has been extensively studied in the irreducible case (see e.g.~\cite{ST09,BST10,MT14}). It is the geometrical interpretation of a certain finiteness property of Dumont-Thomas numeration systems, which was first introduced in the context of beta-numeration in \cite{FS92}.

\medskip

\noindent {\bf Main example.} Now we prove that our family of substitutions of Section~\ref{sec:main-ex} satisfies the geometric finiteness property for certain geometric elements. 


We use some notions of~\cite[Section~3.3]{BBJS:15}.
A geometric element is said to be \emph{edge-connected} if the supports of any two of its faces are connected by a path of supports of faces $\ov{F_1},\ldots,\ov{F_m}$ such that $\ov{F_k}$, $\ov{F_{k+1}}$ share an edge, for all $k\in\{1,\ldots,m-1\}$. 
We say that an edge-connected geometric element $A$ \emph{surrounds} an edge-connected geometric element $P$ if $\{P\}\subset \{A\}$ and $\ov{\partial P}\cap\ov{\partial A} = \emptyset$. If being surrounded is preserved under iterations of the dual substitution $\mathbf{E}^{d-1}(\sigma)$, then we say that the dual substitution satisfies the \emph{annulus property}.

\begin{proposition}\label{propfam}
The following holds for our family of substitutions:
\begin{enumerate}

\item\label{fam1} For every geometric element $\mathcal{U}$ we have $\mathcal{U} \subseteq \mathbf{E}^2(\sigma_t)^5(\mathcal{U})$.
\item\label{fam2} Let $\mathcal{U}_0$ be the sum of three faces centered at $\mathbf{0}$ whose supports intersect pairwise into an edge (we will call such an element $3$-touching in Section~\ref{sec:apertil}). Then for each $i\ge 0$ we have that $\ov{\mathcal{U}_{i+1}} := \ov{\mathbf{E}^2(\sigma_t)^{15}(\mathcal{U}_{i})}$ surrounds $\ov{\mathcal{U}_i}$. Therefore $\mathbf{E}^2(\sigma_t)$ satisfies the annulus property and the geometric finiteness property for $\mathcal{U}_0$ holds.
\end{enumerate}
\end{proposition}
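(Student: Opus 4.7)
For part (1), my plan is to use property (S2) to rule out cancellation between images of distinct faces of $\mathcal{U}$, reducing the claim to the face-by-face statement that $(\mathbf{0},\und{a})\in\mathbf{E}^2(\sigma_t)^5(\mathbf{0},\und{a})$ for each $\und{a}\in\mathcal{O}_2$ (the case of faces based at $\mathbf{0}$, which is all that part (2) subsequently needs, the general case following by linearity and translation equivariance of $\mathbf{E}^2(\sigma_t)$). Inspecting \eqref{eq:E2sigmat}, the ten elements of $\mathcal{O}_2$ split under the ``main image'' map (the summand with empty prefix and suffix) into two cycles of length five,
$$
4\wedge 5 \mapsto 3\wedge 4 \mapsto 2\wedge 3 \mapsto 1\wedge 2 \mapsto 1\wedge 5 \mapsto 4\wedge 5,\qquad
3\wedge 5 \mapsto 2\wedge 4 \mapsto 1\wedge 3 \mapsto 2\wedge 5 \mapsto 1\wedge 4 \mapsto 3\wedge 5.
$$
Along each cycle the five signs multiply to $+1$ and all corresponding prefix/suffix words are empty, so the returning base point is $M_{\sigma_t}^{-5}\mathbf{0}=\mathbf{0}$; this gives the claim.

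For part (2), iterating part (1) three times yields $\mathcal{U}_0\subseteq\mathbf{E}^2(\sigma_t)^{15}(\mathcal{U}_0)=\mathcal{U}_1$, the chain being preserved by the monotonicity of $\mathbf{E}^2(\sigma_t)$ that (S2) provides. For the surround statement $\overline{\partial\mathcal{U}_0}\cap\overline{\partial\mathcal{U}_1}=\emptyset$, I would use Remark~\ref{subdiv} together with the commutation \eqref{eq:relgeomreal} $\partial\circ\mathbf{E}^2(\sigma_t)=\mathbf{E}^1(\sigma_t)\circ\partial$ to identify $\pi_c\overline{\partial\mathcal{U}_1}$ with a stepped curve tracking $M_{\sigma_t}^{-15}|_{\mathbb{K}_c}\,\pi_c\overline{\partial\mathcal{U}_0}$. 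Since the Pisot polynomial of $\sigma_t$ is cubic with complex-conjugate Galois pair of modulus $\beta^{-1/2}$, the restriction $M_{\sigma_t}^{-1}|_{\mathbb{K}_c}$ has eigenvalues of modulus $\beta^{1/2}>1$, so after fifteen iterations distances are scaled by $\beta^{15/2}$, a factor at least $\approx 7.48$ (attained at the minimal Pisot number $t=0$). The fluctuation of the stepped curve around its linearisation is uniformly bounded in $t$ by the faces appearing in $\partial\mathcal{U}_0$, so this expansion pushes $\pi_c\overline{\partial\mathcal{U}_1}$ strictly outside the bounded hexagonal neighbourhood $\pi_c\overline{\mathcal{U}_0}$ of $\mathbf{0}$. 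This is the technical core of the argument, and in practice I would verify it directly on each of the finitely many 3-touching configurations, possibly with computer assistance.

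The annulus property then follows by induction: assuming $\mathcal{U}_i$ surrounds $\mathcal{U}_{i-1}$, apply $\mathbf{E}^2(\sigma_t)^{15}$ to both sides; (S1), (S2) and \eqref{eq:relgeomreal} together preserve the inclusion $\{\mathcal{U}_i\}\subset\{\mathcal{U}_{i+1}\}$ and the disjointness of the corresponding boundaries. Geometric finiteness is then immediate: $\pi_c\overline{\mathcal{U}_i}$ is an increasing sequence of compact sets each containing a ball of radius comparable to $\beta^{15i/2}$ about $\mathbf{0}$, so their union is all of $\mathbb{K}_c$. The main obstacle throughout is the base case of the surround property; once it is in hand, the remaining induction and the covering statement propagate automatically from the uniform expansion of $M_{\sigma_t}^{-1}|_{\mathbb{K}_c}$.
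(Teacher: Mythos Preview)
Your argument follows essentially the same strategy as the paper: part~(1) is the explicit computation that each face reappears at $\mathbf{0}$ with sign $+1$ after five iterates (your identification of the two 5-cycles and the sign count is exactly what the paper's ``by computation'' abbreviates), and part~(2) is a finite verification of the base surround step followed by an inductive argument based on the expansion of $M_{\sigma_t}^{-1}|_{\mathbb{K}_c}$.

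There is however one simplification in the paper that you miss, and it matters for the uniformity in $t$. The paper first observes that the set of faces produced by $\mathbf{E}^2(\sigma_0)$ is contained in that produced by $\mathbf{E}^2(\sigma_t)$ for every $t\ge 1$ (clear from the explicit formulas~\eqref{eq:E2sigmat}), so that it suffices to verify the surround property for $\sigma_0$ alone. This reduces the base case to a single finite computation and, more importantly, sidesteps your claim that the fluctuation of the stepped boundary curve is \emph{uniformly bounded in $t$}. That claim is not obvious: the suffixes appearing in $\mathbf{E}^2(\sigma_t)$ (such as $1^{t-j}5$ and $1^t2$) grow with $t$, so the deviation of $\pi_c\overline{\mathbf{E}^1(\sigma_t)^{15}(\partial\mathcal{U}_0)}$ from its linearisation is not bounded independently of $t$ without a separate argument balancing this growth against the growth of $\beta=\beta(t)$. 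The paper's reduction to $t=0$ avoids this entirely.

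Your inductive step is also stated a bit too tersely. Saying that (S1), (S2) and \eqref{eq:relgeomreal} ``preserve the disjointness of the corresponding boundaries'' is precisely the surround property you are trying to propagate, so it begs the question. The paper's version of this step is the same expansion heuristic you invoke, but phrased in terms of the distance between $\partial\mathcal{U}_i$ and $\partial\mathcal{U}_{i-1}$ growing under the (imperfect) inflate-and-subdivide action; once that distance exceeds the bounded fluctuation, it increases monotonically. Making this explicit would close the gap.
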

\begin{proof}

It suffices to prove the statements for $\sigma_0$ since the support of $\mathbf{E}^2(\sigma_0)$ is contained in that of $\mathbf{E}^2(\sigma_t)$, for $t\geq 1$, as it can be readily seen in the definition of $\mathbf{E}^2(\sigma_t)$.

\eqref{fam1} By computation one can see that each face occurs with no additional translation in its image by $\mathbf{E}^2(\sigma)^5$.

\eqref{fam2} Again by computation we can check that the basic step of the induction is verified since $\mathbf{E}^2(\sigma_0)^{15}(\mathcal{U}_0)$ surrounds $\mathcal{U}_0$, for every $3$-touching element $\mathcal{U}_0$ (an example of this computation can be seen in Figure~\ref{fig:surrounds}). Furthermore the image of two edge-connected faces is edge-connected.

It remains to show that if $\mathcal{U}_i$ surrounds $\mathcal{U}_{i-1}$ then $\mathcal{U}_{i+1}$ surrounds $\mathcal{U}_i$. 
Suppose that $\mathcal{U}_i$ surrounds $\mathcal{U}_{i-1}$ ``enough'', meaning that the shortest distance between $\partial\mathcal{U}_i$ and $\partial\mathcal{U}_{i-1}$ is large enough. Then, since $\mathbf{E}^2(\sigma_0)$ acts as an imperfect inflate-and-subdivide rule on $\mathbb{K}_c$ with inflation $M_\sigma^{-1}|_{\mathbb{K}_c}$, the shape of $\ov{\mathcal{U}_{i+1}}=\ov{\mathbf{E}^2(\sigma_0)^{15}(\mathcal{U}_i)}$ will be very close to that of $\ov{\mathcal{U}_{i}}=\ov{\mathbf{E}^2(\sigma_0)^{15}(\mathcal{U}_{i-1})}$ inflated by $M_\sigma^{-1}|_{\mathbb{K}_c}$. Hence the shortest distance between $\partial\mathcal{U}_{i+1}$ and $\partial\mathcal{U}_i$ will be bigger than that one between $\ov{\partial\mathcal{U}_i}$ and $\ov{\partial\mathcal{U}_{i-1}}$, which implies that $\ov{\mathcal{U}_{i+1}}$ surrounds $\ov{\mathcal{U}_i}$ (see Figure~\ref{fig:nogeo} on the left for a visualization). 
\end{proof}

\begin{figure}[h]
\begin{center}$
\begin{array}{cc}
\includegraphics[scale=.2]{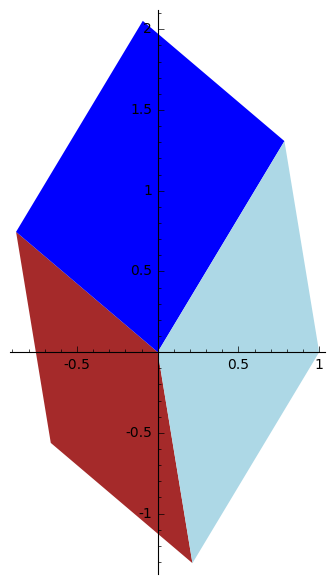} & \quad
\includegraphics[scale=.3]{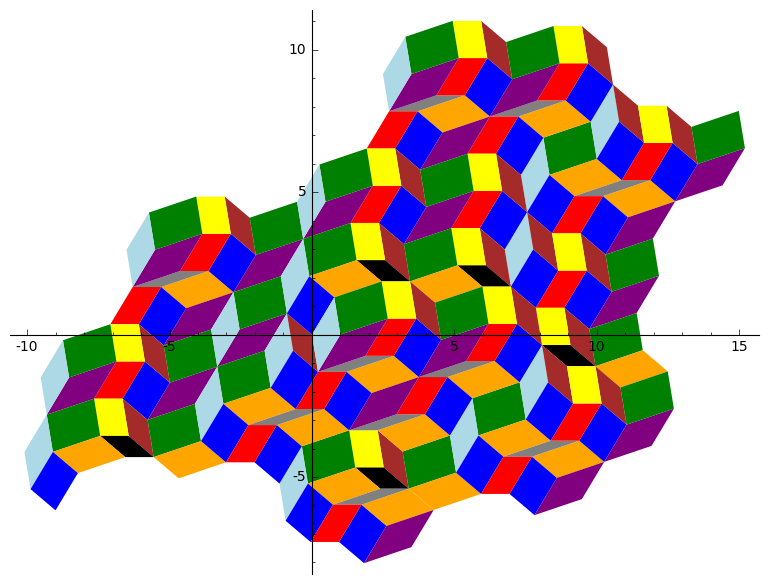}
\end{array}$
\end{center}
\caption{$\mathbf{E}^2(\sigma_0)^{15}(\mathcal{U}_0)$ surrounds $\mathcal{U}_0=\{(\mathbf{0},3\wedge 4),(\mathbf{0},3\wedge 5),(\mathbf{0},4\wedge 5)\}$} \label{fig:surrounds}
\end{figure}

Notice that there exist elements which are not $3$-touching but for which the geometric finiteness property nevertheless holds. The geometric element $\mathcal{U}$ of Figure~\ref{5patch} is an example. On the other hand, the geometric finiteness property does not hold for any single face $(\mathbf{0},\und{a})$ (see Figure~\ref{fig:nogeo} on the right for an example).

\begin{figure}[h]
\includegraphics[scale=.32]{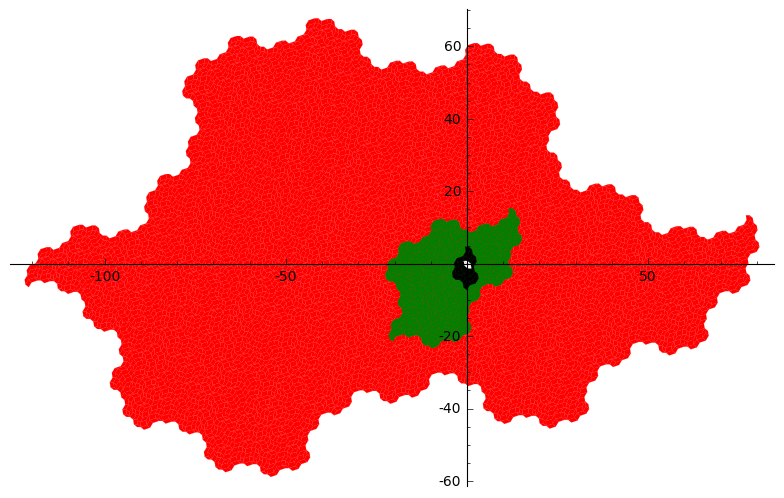}
\quad
\includegraphics[scale=.42]{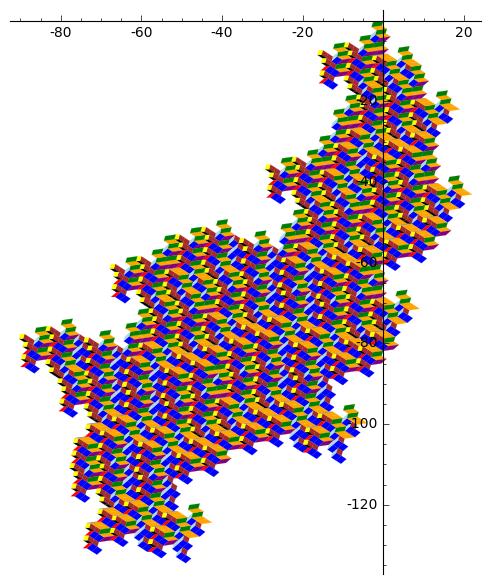}
\caption{$\mathbf{E}^2(\sigma_0)^{10k}((\mathbf{0},1\wedge 4)+(\mathbf{0},1\wedge 5)+(\mathbf{0},4\wedge 5))$ for $k=0,1,2,3$ (left) and $\mathbf{E}^2(\sigma_1)^{10}(\mathbf{0},2\wedge 4)$ (right).\label{fig:nogeo}}
\end{figure}

The aim of this section is to obtain stepped surfaces for reducible Pisot substitutions. Now we have all the necessary ingredients.

\begin{theorem}\label{thm:poly}
Let $\sigma$ be a nice reducible substitution satisfying the geometric finiteness property for some $\mathcal{U}$. 
Then $\Gamma_\mathcal{U}$ is a stepped surface invariant under the substitution rule associated with a power of $\mathbf{E}^{d-1}(\sigma)$.
Furthermore $\pi(\ov{\Gamma_\mathcal{U}})$ stays within bounded distance of $\mathbb{K}_c$. 
\end{theorem}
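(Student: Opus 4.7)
The conclusion splits into three claims: invariance of $\Gamma_\mathcal{U}$ under some power of $\mathbf{E}^{d-1}(\sigma)$, the tiling property $\pi_c(\overline{\Gamma_\mathcal{U}})$ tiles $\K_c$, and boundedness of $\pi(\overline{\Gamma_\mathcal{U}})$ away from $\K_c$. I will tacitly use that $\mathcal{U}\subseteq\mathcal{S}$ (this holds for the natural choices of $\mathcal{U}$: unfolding $\varphi_{\bar n}$, any face $(\mathbf{0},\und{a}^*)$ based at the origin belongs to $\mathcal{S}$), so that by Proposition~\ref{invS} and $\mathbf{E}^{d-1}(\sigma) = \varphi_{\bar n}\circ\mathbf{E}^*_{\bar n}(\sigma)\circ\varphi_{\bar n}^{-1}$ the entire orbit $\Gamma_\mathcal{U}\subseteq\mathcal{S}$.

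\emph{Invariance.} Applying $\mathbf{E}^{d-1}(\sigma)^{mk}$ to the hypothesis $\{\mathcal{U}\}\subseteq\{\mathbf{E}^{d-1}(\sigma)^m(\mathcal{U})\}$ and using that $\mathbf{E}^{d-1}(\sigma)$ is linear and preserves geometricity by (S1) yields a nested chain
\[ \{\mathcal{U}\}\subseteq\{\mathbf{E}^{d-1}(\sigma)^m(\mathcal{U})\}\subseteq\{\mathbf{E}^{d-1}(\sigma)^{2m}(\mathcal{U})\}\subseteq\cdots \]
whose union is $\Gamma_\mathcal{U}$. Applying $\mathbf{E}^{d-1}(\sigma)^m$ term-by-term shifts the index by one and leaves the union unchanged, giving $\mathbf{E}^{d-1}(\sigma)^m(\Gamma_\mathcal{U})=\Gamma_\mathcal{U}$.

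\emph{Tiling property.} Covering is exactly the geometric finiteness hypothesis. For disjointness of interiors, I prove by induction on $k$ that $\mathbf{E}^{d-1}(\sigma)^k(\mathcal{U})$ is geometric and projects well onto $\K_c$. The base case is assumed. Write $\mathbf{E}^{d-1}(\sigma)^k(\mathcal{U})=\sum_j U_j$ with $U_j\in\mathcal{S}$ and the sum projecting well. By (S1) each $\mathbf{E}^{d-1}(\sigma)(U_j)$ is geometric and projects well, while (S2), applied to each pair $(U_j,U_{j'})$, makes the face-sets $\{\mathbf{E}^{d-1}(\sigma)(U_j)\}$ pairwise disjoint and each pairwise sum project well. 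Since well-projecting is a pairwise property, this suffices to conclude that $\mathbf{E}^{d-1}(\sigma)^{k+1}(\mathcal{U})$ is geometric and projects well; hypothesis (P) rules out any hidden cancellation among faces with opposite orientations. Because $\Gamma_\mathcal{U}$ is an increasing union, any two of its faces co-occur in some $\mathbf{E}^{d-1}(\sigma)^{mk}(\mathcal{U})$ and thus have disjoint $\pi_c$-interiors. Combined with the covering, this is the tiling property.

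\emph{Boundedness.} Since $\Gamma_\mathcal{U}\subseteq\mathcal{S}$, it suffices to bound $\pi_e$ on $\mathcal{S}$. For $(\mathbf{y},\und{a}^*)\in\mathcal{S}$, writing $(\mathbf{y},\und{a}^*)=\varphi_{\bar n}((\mathbf{x},\und{a})^*)$ gives $\mathbf{y}=\mathbf{x}+\mathbf{l}(\und{a})$ (up to sign), and the membership $\pi_e(\mathbf{x})\in[-\pi_e\mathbf{l}(\und{a}),\mathbf{0})$ translates to $\pi_e(\mathbf{y})\in[\mathbf{0},\pi_e\mathbf{l}(\und{a}))$. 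As $\mathcal{O}_{\bar n}$ is finite, this produces a uniform bound on $\pi_e(\overline{\Gamma_\mathcal{U}})$, so $\pi(\overline{\Gamma_\mathcal{U}})=\pi_e(\overline{\Gamma_\mathcal{U}})+\pi_c(\overline{\Gamma_\mathcal{U}})$ stays within a bounded tube around $\K_c$.

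\emph{Main obstacle.} The least automatic step is propagating ``projects well'' through iteration: (S2) is phrased only for pairs, and in principle three or more images could meet even when each pair does not. The resolution is that well-projection is genuinely a pairwise condition on interiors, so (S2) applied to every pair gives the full-collection statement; the positivity hypothesis (P) is essential here to ensure the union of images is literally a sum of positively oriented faces rather than a signed combination that could conceal overlapping supports through cancellation.
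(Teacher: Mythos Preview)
Your proof is correct and follows essentially the same approach as the paper's: invariance from the nested chain defining $\Gamma_\mathcal{U}$, the tiling property by inducting on (S1)--(S2) to propagate ``geometric and projecting well'' through iterates (with (P) precluding cancellation), and boundedness from $\Gamma_\mathcal{U}\subseteq\mathcal{S}$. You supply more detail than the paper does---in particular the explicit unpacking of $\varphi_{\bar n}$ to bound $\pi_e$ on $\mathcal{S}$ and the remark that well-projection is a pairwise condition---but the architecture is the same.
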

\begin{proof} 
By the geometric finiteness property there exists an integer $m$ such that $\{\mathcal{U} \}\subseteq \{\mathbf{E}^{d-1}(\sigma)^m(\mathcal{U})\}$ and $\pi_c(\ov{\Gamma_\mathcal{U}})$ covers $\mathbb{K}_c$. 

By Proposition~\ref{subrule}, we can see $\pi_c(\overline{\Gamma_{\mathcal{U}}})$ made of successive inflations and subdivisions of $\pi_c(\overline{\mathcal{U}})$ under $\mathbf{E}^{d-1}(\sigma)$. By Property (S1), for each face $U\in\mathcal{U}$, $\mathbf{E}^{d-1}(\sigma)(U)$ is geometric and projects well and by (S2) 
$\mathbf{E}^{d-1}(\sigma)(U_1+U_2)$ is geometric and projects well, for any two faces $U_1, U_2\in \mathcal{U}$. Thus each $\mathbf{E}^{d-1}(\sigma)^{km}(\mathcal{U})$ projects well onto $\mathbb{K}_c$ for  every $k$, therefore $\pi_c(\ov{\Gamma_\mathcal{U}})$ is a polygonal tiling.

By definition $\Gamma_\mathcal{U}$ is invariant under $\mathbf{E}^{d-1}(\sigma)^m$. Finally we have that $\pi(\overline{\Gamma_\mathcal{U}})$ stays within bounded distance of $\mathbb{K}_c$ since $\Gamma_\mathcal{U}\subset\mathcal{S}$.
\end{proof}
In the language of \cite{S05,Frank08}, $\pi_c(\overline{\Gamma_\mathcal{U}})$ forms a \emph{pseudo self-affine tiling}, since the dual substitution $\mathbf{E}^{d-1}(\sigma)$ acts as an imperfect inflate-and-subdivide rule on $\K_c$.

The next property is crucial in other substitution frameworks; see e.g.~\cite{F06,ABFJ,BF11}.
\begin{proposition}\label{stepstep}
Let $\sigma$ be a nice reducible substitution. Then the image by $\mathbf{E}^{d-1}(\sigma)$ of a stepped surface $\Gamma\subset\mathcal{S}$ is a stepped surface $\{\mathbf{E}^{d-1}(\sigma)\Gamma\}\subset\mathcal{S}$.
\end{proposition}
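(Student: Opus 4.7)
The proof naturally splits into three parts: showing that the image lives in $\mathcal{S}$, that it projects well onto $\mathbb{K}_c$, and that it covers $\mathbb{K}_c$.

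First, $\{\mathbf{E}^{d-1}(\sigma)\Gamma\} \subset \mathcal{S}$ is immediate. Indeed, $\Gamma \subset \mathcal{S}$ by hypothesis, and $\mathcal{S}$ is invariant under $\mathbf{E}^{d-1}(\sigma)$ as noted after Proposition~\ref{invS} (this follows from Proposition~\ref{invS} together with the conjugation by the Poincaré map $\varphi_{\bar n}$ in Definition~\ref{defgeodual}).

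Second, for well-projection, I would take two distinct faces $U_1', U_2' \in \{\mathbf{E}^{d-1}(\sigma)\Gamma\}$ and consider how they arise. There are two cases. If both come from the image $\mathbf{E}^{d-1}(\sigma)(U)$ of a single face $U \in \Gamma$, then hypothesis (S1) guarantees that this image is geometric and projects well, so $\pi_c\overline{U_1'}$ and $\pi_c\overline{U_2'}$ have disjoint interiors. If instead they come from images of two distinct faces $U_1, U_2 \in \Gamma$, then because $\Gamma$ is a stepped surface the element $U_1+U_2$ projects well onto $\mathbb{K}_c$; hypothesis (S2) therefore applies and $\mathbf{E}^{d-1}(\sigma)(U_1+U_2)$ projects well, again yielding disjoint projected interiors. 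Combining these two cases shows that $\{\mathbf{E}^{d-1}(\sigma)\Gamma\}$ projects well.

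Third, and most delicate, is the covering of $\mathbb{K}_c$. My plan is a boundary-cancellation argument based on \eqref{eq:relgeomreal}. Since $\Gamma$ is a tiling of $\mathbb{K}_c$, every interior $(d-2)$-face of the polygonal tiling $\pi_c(\overline{\Gamma})$ is shared by exactly two adjacent $(d-1)$-faces of $\Gamma$ with opposite orientations, so it cancels in the formal sum $\partial \Gamma = \sum_{U \in \Gamma} \partial U \in C_{d-2}$. Since $\Gamma$ covers all of $\mathbb{K}_c$, there are no boundary faces at infinity either, and one obtains $\partial \Gamma = 0$. Applying \eqref{eq:relgeomreal} then gives
\[
\partial\bigl(\mathbf{E}^{d-1}(\sigma)\Gamma\bigr) = \mathbf{E}^{d-2}(\sigma)(\partial \Gamma) = 0.
\]
Thus the image has no internal $(d-2)$-boundary, so it cannot leave gaps between tiles. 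Combined with the well-projection established above and with the fact, guaranteed by the inflate-and-subdivide rule of Proposition~\ref{subrule}, that the image has the same projected measure as $M_\sigma^{-1}\pi_c(\overline{\Gamma}) = \mathbb{K}_c$, this yields the tiling property. As a sanity check, one can complement this with a density argument: for any large ball $B_R \subset \mathbb{K}_c$, choose the finite collection $P_R \subset \Gamma$ of faces whose projections intersect $M_\sigma B_R$; then Proposition~\ref{subrule} gives $\lambda(\pi_c\overline{\mathbf{E}^{d-1}(\sigma)P_R}) = \beta\,\lambda(\pi_c(\overline{P_R})) \sim \lambda(B_R)$, while this image lies within bounded distance of $B_R$ (because the translation shifts are controlled by $\pi_c(M_\sigma^{-1}\mathbf{l}(\mathbf{s}))$ for suffixes $\mathbf{s}$, which are bounded), forcing full density in the limit.

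The main obstacle will be justifying the covering rigorously despite the \emph{imperfect} nature of the inflate-and-subdivide rule: individual images $\pi_c(\overline{\mathbf{E}^{d-1}(\sigma)U})$ need not coincide set-theoretically with the inflated supports $M_\sigma^{-1}\pi_c(\overline{U})$, only in measure. The boundary-cancellation argument above bypasses this by working globally — the imperfection at each face's boundary is absorbed once we note that all interior $(d-2)$-faces formally cancel — and positivity (P) ensures that this cancellation is clean, without spurious algebraic cancellations that would not correspond to geometric ones.
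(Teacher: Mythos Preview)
Your first two parts (inclusion in $\mathcal{S}$ and well-projection) match the paper's argument. The covering part, however, has a real gap.

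The claim $\partial\Gamma = 0$ is where it breaks. You justify this by saying that every interior $(d-2)$-face of the \emph{projected} polygonal tiling $\pi_c(\overline{\Gamma})$ is shared by two adjacent $(d-1)$-faces of $\Gamma$ with opposite orientations. But $\partial$ is the boundary operator on $C_{d-1}$, computed in $\mathbb{R}^n$, not in $\mathbb{K}_c$. Two faces of $\Gamma$ whose projections share an edge in $\mathbb{K}_c$ need \emph{not} share a $(d-2)$-face in $\mathbb{R}^n$: in the reducible setting there are nontrivial $\mathbb{Z}$-linear relations among the $\pi(\mathbf{e}_i)$ (Lemma~\ref{lem:redund}, and concretely \eqref{eq:ratdep} for the main example), so distinct $(d-2)$-faces in $\mathbb{R}^n$ can project onto the same segment of $\mathbb{K}_c$. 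The definition of stepped surface only asks that $\pi_c(\overline{\Gamma})$ tile $\mathbb{K}_c$; it imposes no combinatorial closure condition upstairs. Hence the formal sum $\partial\Gamma$ has no reason to vanish in $C_{d-2}$. (A secondary issue is that $\Gamma$ is infinite while $C_k$ consists of finite sums, so $\partial\Gamma$ and $\mathbf{E}^{d-2}(\sigma)(\partial\Gamma)$ are not even defined as stated.) Your backup density argument does not rescue this: measure equality plus well-projection gives coverage only up to a null set, and promoting that to an honest covering by closed polygons is exactly the point at issue.

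The paper avoids the global cancellation entirely and argues locally by contradiction. If the image fails to cover, choose a point $x$ on the edge of a hole lying on $\partial\pi_c(\overline{F})$ for a \emph{single} face $F$ of $\mathbf{E}^{d-1}(\sigma)\Gamma$ (property (P) rules out the hole coming from cancellation). Trace $F$ back to the unique $G\in\Gamma$ with $F\in\mathbf{E}^{d-1}(\sigma)(G)$; then by (S1) and \eqref{eq:relgeomreal},
\[
x\in \partial\pi_c\big(\overline{\mathbf{E}^{d-1}(\sigma)(G)}\big)=\pi_c\big(\overline{\partial\mathbf{E}^{d-1}(\sigma)(G)}\big)=\pi_c\big(\overline{\mathbf{E}^{d-2}(\sigma)(\partial G)}\big).
\]
Because $\pi_c(\overline{\Gamma})$ tiles $\mathbb{K}_c$, there is a neighbouring $G'\ne G$ in $\Gamma$ with $x\in\pi_c(\overline{\mathbf{E}^{d-2}(\sigma)(\partial G')})=\partial\pi_c(\overline{\mathbf{E}^{d-1}(\sigma)(G')})$, hence $x\in\partial\pi_c(\overline{F'})$ for some $F'\in\mathbf{E}^{d-1}(\sigma)(G')$; by (S2) this $F'$ is distinct from $F$, contradicting the choice of $x$. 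The boundary relation is thus applied to one face at a time, never to the whole of $\Gamma$, which sidesteps both the infinite-sum issue and the question of whether $\partial\Gamma$ vanishes.
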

\begin{proof}
This is a direct consequence of Property (S1) and (S2). Indeed, each face of the stepped surface is replaced after applying $\mathbf{E}^{d-1}(\sigma)$ by a geometric element which projects well and any two distinct faces $U_1,U_2\in\Gamma$ have disjoint images and $\mathbf{E}^{d-1}(\sigma)(U_1+U_2)$ projects well.\\
We now show that $\pi_c(\overline{\mathbf{E}^{d-1}(\sigma)\Gamma})$ covers $\K_c$. Suppose that it is not the case. Then there exists a point $x\in\K_c$ and a face $F\in \mathbf{E}^{d-1}(\sigma)\Gamma$ with the property that $x\in\partial\pi_c(\overline{F})$ and $x\notin\partial\pi_c(\overline{F'})$ for every $F'\in \mathbf{E}^{d-1}(\sigma)\Gamma$ distinct from $F$. Note that this ``hole'' can not follow from a cancellation of faces in the image $\mathbf{E}^{d-1}(\sigma)\Gamma$ because of Property (P). Let $G\in\Gamma$ such that $F\in\mathbf{E}^{d-1}(\sigma)(G)$. In fact, $G$ is unique by (S2). Now, using (S1) and~\eqref{eq:twoduals}, we have
$$x\in\partial\pi_c(\overline{\mathbf{E}^{d-1}(\sigma)(G)})=\pi_c(\overline{\partial\mathbf{E}^{d-1}(\sigma)(G)})=\pi_c(\overline{\mathbf{E}^{d-2}(\sigma)(\partial G)}). 
$$
However, since $\Gamma$ is a stepped surface, we can infer the existence of a face $G'\in\Gamma$ distinct from $G$ such that $x\in\pi_c(\overline{\mathbf{E}^{d-2}(\sigma)(\partial G')})$ (as $\pi_c(\overline{\Gamma})$ is a tiling of $\K_c$). Using again~\eqref{eq:twoduals} and (S1), we obtain that 
$$x\in\pi_c(\overline{\partial\mathbf{E}^{d-1}(\sigma)(G')})=\partial\pi_c(\overline{\mathbf{E}^{d-1}(\sigma)(G')}).
$$  
It follows that $x\in \partial\pi_c(\overline{F'})$ for some $F'\in \mathbf{E}^{d-1}(\sigma)(G)$. By (S2), $F'\ne F$, which contradicts our assumption on $x$.
\end{proof}

Note that an analogous tiling property proved in~\cite[Lemma 4.1 and Proposition 4.2]{AFHI} relies on the fact that boundaries of polygons do not self-intersect. This property is insured by our assumptions (S1) and (S2). 

\section{Rauzy fractals and tiling results}\label{sec:rauzyfrac}

In this section we define a new class of Rauzy fractals generated by the dual substitution $\mathbf{E}^{d-1}(\sigma)$. 
We prove then that, under the geometrical finiteness property, they tile aperiodically and periodically their representation space $\K_c$.
Some of these methods are similar to those used in the irreducible setting, and we include for sake of completeness some proof sketches.

Nevertheless, there are some important conceptual changes: to tackle  reducible substitutions we work with the wedge formalism and with higher-dimensional geometric dual substitutions, thus well-known concepts of Rauzy fractals theory, like e.g.~the strong coincidence condition or the domain exchange, must be redefined.

\subsection{Rauzy fractals}

Since the sequence of sets $M_\sigma^k\,\pi_c(\overline{\mathbf{E}^{d-1}(\sigma)^k (\mathbf{x},\underline{a})})$ converges in the Hausdorff metric (see also \cite{SAI}), this leads to the following natural definition.

\begin{definition}\label{def:rauzyfrac}
The {\it Rauzy fractals} are defined as
\[ \mathcal{R}(\underline{a}) + \pi_c(\mathbf{x}) = \lim_{k\to\infty} M_\sigma^k\,\pi_c(\overline{\mathbf{E}^{d-1}(\sigma)^k (\mathbf{x},\underline{a})}),\quad \text{ for any } (\mathbf{x},\und{a})\in\mathcal{S}, \]
where the limit is taken with respect to the Hausdorff metric.
\end{definition}

\begin{proposition}\label{prop:seteq}
We have the set equations
\begin{equation}\label{eq:seteq} \mathcal{R}(\underline{a}) + \pi_c(\mathbf{x}) =  \bigcup_{(\mathbf{y},\underline{b})\in \mathbf{E}^{d-1}(\sigma)(\mathbf{x},\underline{a})} M_\sigma\big(\mathcal{R}(\underline{b}) + \pi_c(\mathbf{y})\big). 
\end{equation}
Furthermore, if $\sigma$ is a nice reducible substitution then the union on the right-hand side is measure disjoint. 
\end{proposition}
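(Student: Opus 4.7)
The plan is to treat the two parts separately. First, for the set equation I would unfold one step of the defining iteration: by the $\mathbb{Z}$-linearity of $\mathbf{E}^{d-1}(\sigma)$ and the absence of cancellation under hypothesis (P), one has as formal geometric elements
\[ \mathbf{E}^{d-1}(\sigma)^{k+1}(\mathbf{x},\und{a}) = \sum_{(\mathbf{y},\und{b})\in\mathbf{E}^{d-1}(\sigma)(\mathbf{x},\und{a})}\mathbf{E}^{d-1}(\sigma)^{k}(\mathbf{y},\und{b}); \]
projecting the supports and applying $M_\sigma^{k+1}$ turns this into the set-theoretic identity
\[ M_\sigma^{k+1}\pi_c\overline{\mathbf{E}^{d-1}(\sigma)^{k+1}(\mathbf{x},\und{a})} = M_\sigma\bigcup_{(\mathbf{y},\und{b})}M_\sigma^{k}\pi_c\overline{\mathbf{E}^{d-1}(\sigma)^{k}(\mathbf{y},\und{b})}. \]
Sending $k\to\infty$, and using that continuous affine maps and finite unions commute with Hausdorff convergence of compact sets, yields~\eqref{eq:seteq} directly from Definition~\ref{def:rauzyfrac}.

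For the measure-disjointness I would run a Perron--Frobenius forcing argument. Setting $v_{\und{a}}:=\lambda(\mathcal{R}(\und{a}))$ for $\und{a}\in\mathcal{O}_{d-1}$, subadditivity of Lebesgue measure together with~\eqref{eq:contrac} and the set equation immediately gives
\[ v_{\und{a}} \;\leq\; \beta^{-1}\sum_{\und{b}\in\mathcal{O}_{d-1}}|(M_{d-1})_{\und{b},\und{a}}|\,v_{\und{b}}, \]
with equality \emph{if and only if} the union in~\eqref{eq:seteq} is measure-disjoint. In matrix form this reads $\beta v\leq |{}^tM_{d-1}|v$ componentwise. By~\eqref{eq:MkRel} and positivity (P), $|{}^tM_{d-1}|$ agrees with $M^*_{\bar n}$ up to the relabelling $\und{a}\leftrightarrow\und{a}^*$ of indices, hence by Lemma~\ref{le:modone} it is primitive with Perron--Frobenius eigenvalue $\beta$. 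Left-multiplying the inequality by a positive left eigenvector $u^T>0$,
\[ \beta\,u^T v \;\leq\; u^T|{}^tM_{d-1}|v \;=\; \beta\,u^T v, \]
which forces equality in every coordinate; this is exactly the claimed measure-disjointness.

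The one point I expect to require care is the verification that $v_{\und{a}}>0$, so that the Perron--Frobenius forcing is non-vacuous. My plan is to deduce it from Proposition~\ref{subrule}: iterating the imperfect inflate-and-subdivide formula shows that the compact approximants $M_\sigma^k\pi_c\overline{\mathbf{E}^{d-1}(\sigma)^k(\mathbf{0},\und{a})}$ have Lebesgue measure equal to $\lambda(\pi_c\overline{(\mathbf{0},\und{a})})>0$ independently of $k$, while upper semicontinuity of Lebesgue measure on compact sets under Hausdorff convergence then yields $v_{\und{a}}\geq\lambda(\pi_c\overline{(\mathbf{0},\und{a})})>0$. Beyond this, the only bookkeeping needed is to keep track that hypothesis (P) really does rule out all cancellation when iterating $\mathbf{E}^{d-1}(\sigma)$, so that the decomposition used in the first paragraph is a genuine disjoint sum of positive faces.
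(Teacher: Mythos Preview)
Your argument is correct and follows the same line as the paper: the set equation by unfolding one step of the defining Hausdorff limit, and measure disjointness via the subadditivity inequality $\beta\,\lambda(\mathcal{R}(\und{a}))\leq\sum_{\und{b}}|({}^tM_{d-1})_{\und{a},\und{b}}|\,\lambda(\mathcal{R}(\und{b}))$ combined with Lemma~\ref{le:modone}. The paper's own proof is much terser---it simply says the set equation ``follows easily by definition'' and that equality holds because $\beta$ is the Perron--Frobenius eigenvalue of $|M_{d-1}|$---so your explicit bookkeeping on cancellation (invoking (P)) and on the pairing with a positive left eigenvector fills in details the paper leaves implicit. One remark: your verification that $v_{\und{a}}>0$ via Proposition~\ref{subrule} and upper semicontinuity is correct but not strictly needed for the forcing step, since pairing the componentwise inequality $\beta v\leq|{}^tM_{d-1}|v$ with a strictly positive left Perron eigenvector forces equality in every coordinate even if some entries of $v$ were to vanish.
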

\begin{proof}
The set equations follow easily by definition of Rauzy fractal. For the measures the following holds
\[ \beta\;\lambda(\mathcal{R}(\underline{a})) \leq \sum_{\und{b}} |({}^t M_{d-1})_{\und{a}\,\und{b}}|\;\lambda(\mathcal{R}(\underline{b})).  \]
We get the equality since by Lemma~\ref{le:modone} the Perron-Frobenius eigenvalue of $|M_{d-1}|$ is $\beta$.
\end{proof}

Theorem~\ref{thm:poly} and the definition of $\Gamma_\mathcal{U}$ imply that $\Gamma_\mathcal{U}$ seen as set of colored points (the base points of the faces colored by their type) is a {\it substitution Delone set} (see \cite{LW}).
Rauzy fractals are constructed starting from a substitution Delone set, as Proposition~\ref{prop:seteq} shows.
Thus, we can use the result \cite[Theorem 5.5]{LW} (see also \cite[Theorem~6.1]{EIR}) to deduce good properties for the Rauzy fractals.

\begin{proposition}\label{prop:prop}
If $\sigma$ is a nice reducible substitution then the Rauzy fractals have the following properties.
\begin{enumerate}
\item They are compact sets with non-zero measure.
\item They are the closure of their interior.
\item Their fractal boundary has zero measure.
\end{enumerate}
\end{proposition}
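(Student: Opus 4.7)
The plan is to reduce to a direct application of the Lagarias--Wang theorem \cite[Theorem 5.5]{LW}, exactly as the paragraph preceding Proposition~\ref{prop:prop} hints. To set this up, I would first repackage $\Gamma_\mathcal{U}$ as a colored point set
$$\Lambda_{\und{a}}=\{\pi_c(\mathbf{x}):(\mathbf{x},\und{a})\in\Gamma_\mathcal{U}\},\quad \und{a}\in\mathcal{O}_{d-1},$$
and check that $(\Lambda_{\und{a}})_{\und{a}}$ is a primitive substitution Delone multi-color set. Uniform discreteness of each $\Lambda_{\und{a}}$ follows from Theorem~\ref{thm:poly}, which places $\pi(\ov{\Gamma_\mathcal{U}})$ within a bounded neighborhood of $\mathbb{K}_c$; relative density of $\bigcup_{\und{a}}\Lambda_{\und{a}}$ follows from the tiling conclusion $\pi_c(\ov{\Gamma_\mathcal{U}})=\mathbb{K}_c$ of the same theorem. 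The substitution structure with inflation $M_\sigma^{-1}|_{\mathbb{K}_c}$ is then provided by the invariance under $\mathbf{E}^{d-1}(\sigma)^m$ coming from the geometric finiteness hypothesis together with the set equations of Proposition~\ref{prop:seteq}, while the primitivity required on the associated incidence matrix is exactly assumption (P) (combined with Lemma~\ref{le:modone}).

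With this identification in place, the Rauzy fractals $\mathcal{R}(\und{a})$ are the attractors of the graph-directed iterated function system whose contractions are $M_\sigma|_{\mathbb{K}_c}$ acting according to Proposition~\ref{prop:seteq}. Compactness in (1) is then immediate from the Hausdorff-limit definition. For the non-triviality of the measure, the measure-disjointness established in Proposition~\ref{prop:seteq} combined with the set equations yields
$$\beta\,\lambda(\mathcal{R}(\und{a}))=\sum_{\und{b}}\big|({}^tM_{d-1})_{\und{a},\und{b}}\big|\,\lambda(\mathcal{R}(\und{b})),$$
so that $\bigl(\lambda(\mathcal{R}(\und{a}))\bigr)_{\und{a}}$ is a non-negative eigenvector of $|{}^tM_{d-1}|$ for its Perron-Frobenius eigenvalue $\beta$ (Lemma~\ref{le:modone}); primitivity from (P) forces this vector to be strictly positive, giving (1). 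Properties (2) and (3) then follow directly from \cite[Theorem 5.5]{LW} applied to the substitution Delone multi-color set constructed above.

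The main obstacle is the careful verification of the substitution Delone hypotheses in the reducible framework, since \cite[Theorem 5.5]{LW} is usually invoked in the irreducible setting where (P), (S1), (S2), (N) are automatic or trivial. Here one must use each of the conditions built into the notion of a nice reducible substitution: (S1) and (S2) to control that the iterated patches stay well-projecting (so that colored base points really form a Delone family), (P) to exclude bad cancellations and ensure primitivity of the incidence matrix $|{}^tM_{d-1}|$, and (N) together with Lemma~\ref{le:modone} to pin down the Perron value at $\beta$, which is what guarantees that boundaries (governed by $\mathbf{E}^{d-2}(\sigma)$) grow at a strictly smaller rate than interiors and hence have zero Lebesgue measure, yielding (3).
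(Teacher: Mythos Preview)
Your proposal follows exactly the route the paper takes: the paragraph preceding Proposition~\ref{prop:prop} already asserts that $\Gamma_\mathcal{U}$, viewed as a set of colored base points, is a substitution Delone set by Theorem~\ref{thm:poly}, and that one then invokes \cite[Theorem~5.5]{LW} (and \cite[Theorem~6.1]{EIR}); you have simply fleshed out the verification of the Lagarias--Wang hypotheses. One caveat worth flagging (present in the paper's own sketch as well): invoking Theorem~\ref{thm:poly} and the construction of $\Gamma_\mathcal{U}$ tacitly uses the geometric finiteness property for some $\mathcal{U}$, which is not formally part of the hypothesis of Proposition~\ref{prop:prop} as stated---so strictly speaking the argument proves the result under the standing assumptions of Section~\ref{sec:rauzyfrac} rather than for an arbitrary nice reducible substitution.
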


\subsection{Strong coincidence condition}

We introduce an important combinatorial condition on the substitution.
\begin{definition}\label{def:scc}
We say that $\sigma$ has a \emph{coincidence} for $\und{a}_0$, $\und{b}_0 \in\mathcal{O}_{\bar n}$ if there exist $k\in\mathbb{N}$ and $\und{c}\in\mathcal{O}_{\bar n}$ such that, for every $i=0,\ldots,k-2$, $\sigma(\und{a}_i) = \mathbf{p}_{i}\und{a}_{i+1}\mathbf{s}_{i}$, $\sigma(\und{b}_i) = \mathbf{p}'_{i}\und{b}_{i+1}\mathbf{s}'_{i}$, and $\sigma(\und{a}_{k-1}) = \mathbf{p}_{k-1}\und{c}\mathbf{s}_{k-1}$, $\sigma(\und{b}_{k-1}) = \mathbf{p}'_{k-1}\und{c}\mathbf{s}'_{k-1}$,  with $M_\sigma^{k-1}\mathbf{l}(\mathbf{s}_0)+\cdots+M_\sigma\mathbf{l}(\mathbf{s}_{k-2}) + \mathbf{l}(\mathbf{s}_{k-1}) = M_\sigma^{k-1}\mathbf{l}(\mathbf{s}'_0)+\cdots+M_\sigma\mathbf{l}(\mathbf{s}'_{k-2})+\mathbf{l}(\mathbf{s}'_{k-1})$.

We say that the substitution $\sigma$ satisfies the (\emph{suffix}) {\it strong coincidence condition} if $\sigma$ has a coincidence for every pair $\und{a}_0, \und{b}_0\in\mathcal{O}_{\bar n}$ such that the geometric element $(\mathbf{0},\und{a}_0^*)+(\mathbf{0},\und{b}_0^*)$ projects well.

An equivalent formulation holds in terms of prefixes.
\end{definition}

\begin{remark}
Notice that in the irreducible case the strong coincidence condition coincides with the one defined in \cite{AI}. But since we work with $\mathbf{E}^{d-1}(\sigma)$ which depends on wedges on $\bar n$ letters, we will consider the natural generalization given by the above definition.
\end{remark}

\begin{proposition}\label{SCC}
Let $\sigma$ be a nice reducible substitution satisfying the strong coincidence condition. Then, for any well-projecting geometric element $\mathcal{U}$, the subtiles $\mathcal{R}(\und{a})$, with $(\mathbf{0},\und{a})\in\mathcal{U}$, are pairwise measure disjoint.
\end{proposition}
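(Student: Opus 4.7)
The plan is to argue by contradiction, adapting to our wedge formalism the classical strategy used in the irreducible setting \cite{AI,CS}. Suppose that two distinct $\und{a}_0,\und{b}_0\in\mathcal{O}_{\bar n}$ with $(\mathbf{0},\und{a}_0^*),(\mathbf{0},\und{b}_0^*)\in\mathcal{U}$ satisfy $\lambda(\mathcal{R}(\und{a}_0)\cap\mathcal{R}(\und{b}_0))>0$. Since $\mathcal{U}$ projects well, the geometric element $(\mathbf{0},\und{a}_0^*)+(\mathbf{0},\und{b}_0^*)$ projects well, so the strong coincidence condition (Definition~\ref{def:scc}) applies and yields an integer $k$, a common type $\und{c}\in\mathcal{O}_{\bar n}$, and two chains $(\und{a}_i)_{i=0}^k$, $(\und{b}_i)_{i=0}^k$ with $\und{a}_k=\und{b}_k=\und{c}$ satisfying the suffix-sum matching condition.

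The first step would be to iterate the set equation~\eqref{eq:seteq} $k$ times starting from $\mathcal{R}(\und{c})$, yielding
\[
\mathcal{R}(\und{c}) = \bigcup_{(\mathbf{y},\und{\gamma})\in\mathbf{E}^{d-1}(\sigma)^k(\mathbf{0},\und{c}^*)} M_\sigma^k\bigl(\pi_c(\mathbf{y}) + \mathcal{R}(\und{\gamma})\bigr),
\]
and to verify by induction on $k$ that this union remains measure-disjoint. The base case is Proposition~\ref{prop:seteq}; the inductive step uses that the distinct level-$k$ pieces are measure-disjoint, so their further measure-disjoint subdivisions at level $k+1$ remain mutually measure-disjoint.

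The second step would be to translate the SCC chains into the dual substitution language. Reading the chain $\und{a}_0\to\cdots\to\und{a}_k=\und{c}$ backwards through $\mathbf{E}^{d-1}(\sigma)$, one obtains an iterated path starting at $(\mathbf{0},\und{c}^*)$ and reaching a face $(\mathbf{y}_a,\und{a}_0^*)\in\mathbf{E}^{d-1}(\sigma)^k(\mathbf{0},\und{c}^*)$. A direct telescoping computation based on~\eqref{eq:Edm1} gives
\[
M_\sigma^k\mathbf{y}_a = M_\sigma^{k-1}\mathbf{l}(\mathbf{s}_0)+\cdots+M_\sigma\mathbf{l}(\mathbf{s}_{k-2})+\mathbf{l}(\mathbf{s}_{k-1}),
\]
which is precisely the quantity equated by SCC. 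Hence the faces produced from the two chains satisfy $\mathbf{y}_a=\mathbf{y}_b=:\mathbf{y}$, and positivity~(P) ensures that they appear in $\mathbf{E}^{d-1}(\sigma)^k(\mathbf{0},\und{c}^*)$ without being annihilated by cancellation. Combining the two steps, the measure-disjoint decomposition of $\mathcal{R}(\und{c})$ contains both $M_\sigma^k(\pi_c(\mathbf{y})+\mathcal{R}(\und{a}_0))$ and $M_\sigma^k(\pi_c(\mathbf{y})+\mathcal{R}(\und{b}_0))$; their intersection equals $M_\sigma^k(\pi_c(\mathbf{y})+(\mathcal{R}(\und{a}_0)\cap\mathcal{R}(\und{b}_0)))$ and has Lebesgue measure $\beta^{-k}\lambda(\mathcal{R}(\und{a}_0)\cap\mathcal{R}(\und{b}_0))>0$ by~\eqref{eq:contrac}, a contradiction.

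The main obstacle is expected to be the second step: establishing rigorously the correspondence between the forward SCC chains of $\sigma$ (with prescribed permutations of letters inside wedges) and the backward iterated paths in $\mathbf{E}^{d-1}(\sigma)$ (which carry both permutations $\tau$ and signs coming from~\eqref{eq:Edm1}). Once the correspondence is set up, the telescoping identity for $M_\sigma^k\mathbf{y}_a$ is routine, but aligning the combinatorial data and verifying that positivity~(P) indeed prevents the key faces from disappearing under cancellation require careful bookkeeping.
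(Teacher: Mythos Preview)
Your proposal is correct and follows essentially the same route as the paper: both use the strong coincidence condition to locate, inside the $k$-fold iterated set equation for $\mathcal{R}(\und{c}^*)$, two pieces $M_\sigma^k(\pi_c(\mathbf{y})+\mathcal{R}(\und{a}_0^*))$ and $M_\sigma^k(\pi_c(\mathbf{y})+\mathcal{R}(\und{b}_0^*))$ translated by the \emph{same} vector, and then invoke the measure-disjointness of that iterated decomposition (Proposition~\ref{prop:seteq}) to conclude. The paper states this directly in one sentence, whereas you wrap it as a contradiction and spell out the supporting details (the induction for measure-disjointness of the $k$-fold decomposition, the telescoping identity for $M_\sigma^k\mathbf{y}_a$, and the appeal to~(P) to rule out cancellation); these are exactly the points the paper leaves implicit. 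One cosmetic slip: since $\und{a}_0,\und{b}_0\in\mathcal{O}_{\bar n}$, the subtiles should be written $\mathcal{R}(\und{a}_0^*)$, $\mathcal{R}(\und{b}_0^*)$ throughout, matching the paper's convention and your own use of $(\mathbf{y}_a,\und{a}_0^*)$.
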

\begin{proof}
By the strong coincidence condition, for every $(\mathbf{0},\und{a}_0^*),(\mathbf{0},\und{b}_0^*)\in\mathcal{U}$, we have that there exist $k\in\mathbb{N}$ and $\und{c}^*\in\mathcal{O}_{d-1}$ such that $M_\sigma^k\mathcal{R}(\und{a}_0^*)+\pi_c(M_\sigma^{k-1}\mathbf{l}(\mathbf{s}_0)+\cdots+M_\sigma\mathbf{l}(\mathbf{s}_{k-2}) + \mathbf{l}(\mathbf{s}_{k-1}))$ and $M_\sigma^k\mathcal{R}(\und{b}_0^*)+\pi_c(M_\sigma^{k-1}\mathbf{l}(\mathbf{s}'_0)+\cdots+M_\sigma\mathbf{l}(\mathbf{s}'_{k-2})+\mathbf{l}(\mathbf{s}'_{k-1}))$ both appear in the $k$-fold iteration of the set equations of Proposition \ref{prop:seteq} for $\mathcal{R}(\und{c}^*)$, and furthermore are measure disjoint.
\end{proof}

\medskip

\noindent {\bf Main example}

\begin{proposition}\label{prop:scc}
Every $\sigma_t$ of the family of Section~\ref{sec:main-ex} satisfies the suffix strong coincidence condition.
\end{proposition}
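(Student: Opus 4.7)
The plan is to verify the suffix strong coincidence condition by a finite case analysis on pairs $(\underline{a}_0, \underline{b}_0) \in \mathcal{O}_3 \times \mathcal{O}_3$, exploiting the rigid cyclic structure of $\sigma_t$. First, I would reduce the set of pairs to be checked: since both faces $(\mathbf{0},\underline{a}_0^*)$ and $(\mathbf{0},\underline{b}_0^*)$ share the base point $\mathbf{0}$, their $\pi_c$-projections are two parallelograms in $\mathbb{K}_c$ sharing at least the vertex $\mathbf{0}$; they have disjoint interiors precisely when $\underline{a}_0 \ne \underline{b}_0$. So the verification reduces to $\binom{10}{2}=45$ unordered pairs of distinct elements of $\mathcal{O}_3$ (and the reflexive case, where the coincidence is trivial with $k=0$).

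Next I would use the rigidity of $\sigma_t$ to tame the case analysis uniformly in $t$. Letters $2,3,5$ have length-one images (empty prefix and suffix), so iterating them for a few steps is deterministic and follows the cycle $2\to 3\to 4\to 1^t 5\to 1$ and $5\to 1$; only the letters $1$ and $4$ give the branching in $\sigma_t(1)=1^{t+1}2$ and $\sigma_t(4)=1^t 5$. In particular, $\sigma_t^k$ of every letter contains the letter $1$ at many positions for all $k$ large enough (independently of $t$), and $\sigma_t^5$ contains each of the letters $1,2,3,4,5$ at least once. This provides, for every triple of coordinates of $\underline{a}_0$ (resp.~$\underline{b}_0$), many ways to extract a common candidate wedge $\underline{c}\in\mathcal{O}_3$ together with a choice of prefix $\mathbf{p}_i$ and suffix $\mathbf{s}_i$ at each iteration.

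The third and central step is to match the vectors $\sum_{i=0}^{k-1} M_\sigma^{k-1-i}\mathbf{l}(\mathbf{s}_i) = \sum_{i=0}^{k-1} M_\sigma^{k-1-i}\mathbf{l}(\mathbf{s}'_i)$ in $\mathbb{Z}^5$. Using the identity $\mathbf{l}(\mathbf{p}_i)+\mathbf{l}(\underline{a}_{i+1})+\mathbf{l}(\mathbf{s}_i)=M_\sigma\mathbf{l}(\underline{a}_i)$ together with the telescoping trick, this condition is equivalent to
\[ M_\sigma^{k}\bigl(\mathbf{l}(\underline{a}_0)-\mathbf{l}(\underline{b}_0)\bigr)=\sum_{i=0}^{k-1} M_\sigma^{k-1-i}\bigl(\mathbf{l}(\mathbf{p}_i)-\mathbf{l}(\mathbf{p}'_i)\bigr),\]
so the task is to choose the extractions at each step (hence the prefixes $\mathbf{p}_i,\mathbf{p}'_i$) so as to realize the prescribed correction $M_\sigma^{k}(\mathbf{l}(\underline{a}_0)-\mathbf{l}(\underline{b}_0))$. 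The essential freedom is that every occurrence of the letter $1$ inside $\sigma_t(1)=1^{t+1}2$ gives a prefix from the list $\varepsilon,1,11,\ldots,1^t$, so the available adjustments to $\mathbf{l}(\mathbf{p}_i)$ include any integer multiple of $\mathbf{e}_1$ between $\mathbf{0}$ and $t\mathbf{e}_1$; combined with powers of $M_\sigma$, these adjustments generate a sublattice of $\mathbb{Z}^5$ containing $\mathbf{l}(\underline{a}_0)-\mathbf{l}(\underline{b}_0)$ for any pair.

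The main obstacle I expect is the verification that the sublattice of achievable corrections really covers the needed vectors for each of the $45$ pairs; this cannot be avoided entirely by symmetry, but it is a finite and mechanical check. Because the substitution structure of $\sigma_t$ is uniform in $t$ (only the multiplicity of $1$'s changes), it is natural to first settle the Hokkaido case $\sigma_0$ by a direct tabulation using the explicit images $\sigma_0^5(a)$ computed in Section~\ref{sec:hok}, and then observe that each coincidence chain produced for $\sigma_0$ extends verbatim to $\sigma_t$ for $t\ge 1$ because every occurrence of $\sigma_0(a)$ as a factor of $\sigma_0^k(b)$ has a counterpart as a factor of $\sigma_t^k(b)$ (with the same letters of $\mathcal{A}\setminus\{1\}$ in corresponding positions), and the extra $1$'s only enlarge the pool of admissible prefixes, preserving the suffix-sum match.
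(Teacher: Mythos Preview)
Your overall strategy --- verify the condition for $\sigma_0$ by a finite tabulation of coincidences, then lift to every $\sigma_t$ by observing that each coincidence chain for $\sigma_0$ survives in $\sigma_t$ because the branching structure of $\sigma_t$ only adds options --- is exactly the route the paper takes. The paper records, for each admissible pair $(\underline{a},\underline{b})$, the depth $k$ at which a suffix coincidence occurs for $\sigma_0$, and then argues via the $\mathbf{E}^2(\sigma)$-suffix graph that the edge set for $\sigma_t$ contains that of $\sigma_0$, so the same coincidence paths exist for all $t\ge 1$. Your telescoping reformulation in terms of prefix corrections is correct and potentially useful, but you do not carry it through to an independent argument; you end up at ``direct tabulation for $\sigma_0$'', which is the paper's method.

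There is, however, a genuine error in your reduction step. You claim that for two faces $(\mathbf{0},\underline{a}_0^*)$ and $(\mathbf{0},\underline{b}_0^*)$ based at $\mathbf{0}$, the $\pi_c$-projections have disjoint interiors precisely when $\underline{a}_0\neq\underline{b}_0$, yielding $\binom{10}{2}=45$ pairs to check. This is false: two parallelograms in $\mathbb{K}_c\cong\mathbb{R}^2$ sharing the vertex $\mathbf{0}$ generically overlap, and whether they do depends on the concrete configuration of the vectors $\pi_c(\mathbf{e}_i)$ (recall in particular the dependencies $\pi_c(\mathbf{e}_1)=\pi_c(\mathbf{e}_3)+\pi_c(\mathbf{e}_4)$ and $\pi_c(\mathbf{e}_5)=\pi_c(\mathbf{e}_2)+\pi_c(\mathbf{e}_3)$). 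In fact only $30$ of the $45$ unordered pairs give a well-projecting sum $(\mathbf{0},\underline{a}_0^*)+(\mathbf{0},\underline{b}_0^*)$, and Definition~\ref{def:scc} requires a coincidence only for those. If you attempt to find coincidences for all $45$ pairs you risk failing on some of the $15$ non-projecting pairs and drawing the wrong conclusion. The fix is to first determine, by a direct geometric check, which pairs project well, and tabulate coincidences only for those.
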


\begin{proof}
Recall from Definition~\ref{def:scc} the notion of coincidence.
The table of Figure~\ref{tab:coinc} shows the list of $\und{a}$ and $\und{b}$ with the corresponding $k$ for which $\sigma_0$ has a coincidence.

\begin{figure}
\begin{center}
\begin{tabular}{ c | c | c }
$\und{a}$ & $\und{b}$ & $k$ \\ \hline  
$1\wedge 2\wedge 3$ & $1\wedge 2\wedge 4$ & $7$ \\
$1\wedge 2\wedge 3$ & $1\wedge 2\wedge 5$ & $7$ \\
$1\wedge 2\wedge 3$ & $2\wedge 3\wedge 4$ & $11$ \\
$1\wedge 2\wedge 3$ & $2\wedge 3\wedge 5$ & $10$ \\
$1\wedge 2\wedge 3$ & $2\wedge 4\wedge 5$ & $12$ \\
$1\wedge 2\wedge 4$ & $1\wedge 2\wedge 5$ & $6$ \\
$1\wedge 2\wedge 4$ & $1\wedge 3\wedge 4$ & $8$ \\
$1\wedge 2\wedge 4$ & $1\wedge 3\wedge 5$ & $8$ \\
$1\wedge 2\wedge 4$ & $2\wedge 3\wedge 5$ & $10$ \\
$1\wedge 2\wedge 4$ & $2\wedge 4\wedge 5$ & $10$ \\
$1\wedge 2\wedge 4$ & $3\wedge 4\wedge 5$ & $10$ \\
$1\wedge 2\wedge 5$ & $1\wedge 3\wedge 4$ & $8$ \\
$1\wedge 2\wedge 5$ & $1\wedge 3\wedge 5$ & $8$ \\
$1\wedge 2\wedge 5$ & $1\wedge 4\wedge 5$ & $8$ \\
$1\wedge 3\wedge 4$ & $1\wedge 3\wedge 5$ & $6$ \\
$1\wedge 3\wedge 4$ & $2\wedge 3\wedge 4$ & $9$ \\
$1\wedge 3\wedge 4$ & $2\wedge 3\wedge 5$ & $9$ \\
$1\wedge 3\wedge 4$ & $2\wedge 4\wedge 5$ & $10$ \\
$1\wedge 3\wedge 4$ & $3\wedge 4\wedge 5$ & $10$ \\
$1\wedge 3\wedge 5$ & $1\wedge 4\wedge 5$ & $7$ \\
$1\wedge 3\wedge 5$ & $2\wedge 3\wedge 4$ & $11$ \\
$1\wedge 3\wedge 5$ & $2\wedge 3\wedge 5$ & $9$ \\
$1\wedge 3\wedge 5$ & $2\wedge 4\wedge 5$ & $9$ \\
$1\wedge 4\wedge 5$ & $2\wedge 3\wedge 5$ & $9$ \\
$1\wedge 4\wedge 5$ & $2\wedge 4\wedge 5$ & $9$ \\
$1\wedge 4\wedge 5$ & $3\wedge 4\wedge 5$ & $9$ \\
$2\wedge 3\wedge 4$ & $2\wedge 3\wedge 5$ & $11$ \\
$2\wedge 3\wedge 4$ & $3\wedge 4\wedge 5$ & $10$ \\
$2\wedge 3\wedge 5$ & $2\wedge 4\wedge 5$ & $7$ \\
$2\wedge 4\wedge 5$ & $3\wedge 4\wedge 5$ & $8$ 
\end{tabular}
\end{center}
\caption{Table of coincidences for $\sigma_0$.\label{tab:coinc}}
\end{figure}

One can check that these are all possible $\und{a}$ and $\und{b}$ such that the geometric element $\mathcal{U}=(\mathbf{0},\und{a}^*)+(\mathbf{0},\und{b}^*)$ projects well. Thus we conclude that $\sigma_0$ satisfies the strong coincidence condition.
Observe that if $\sigma_0$ has a coincidence, then all $\sigma_t$ for $t\ge 1$ will have that coincidence, as one can see from the definition \eqref{eq:E2sigmat} of $\mathbf{E}^2(\sigma_t)$. 
Indeed, by Definition~\ref{def:sufgraph} there is an edge $\und{a}^* \xrightarrow{\mathbf{s}}\und{b^*}$ if and only if $\sigma(\und{a}) = \mathbf{p}\und{b}\mathbf{s}$, thus if $\sigma_0$ has a coincidence for $\und{a}_0,\und{b}_0$ there are two paths in this graph starting from $\und{a}_0$ and $\und{b}_0$ respectively and ending at the same $\und{c}$, with same suffix abelianization. But since the graph of $\sigma_t$ has the same set of vertices as the graph of $\sigma_0$ and a set of edges that includes the set of edges of the graph of $\sigma_0$, the same paths exist in the graph of $\sigma_t$, for every $t$.
Therefore, for every $t\ge 0$, $\sigma_t$ satisfies the strong coincidence condition.
\end{proof}

\subsection{Aperiodic and periodic tilings}\label{sec:apertil}

In the following theorem we show that a nice reducible substitution with the geometric finiteness and strong coincidence conditions induces a fractal aperiodic tiling simply replacing the polygonal faces of a stepped surface with the Rauzy fractals.

\begin{theorem}\label{thm:selfreptil}
Let $\sigma$ be a nice reducible substitution which satisfies the strong coincidence condition and the geometric finiteness property for the well-projecting geometric element $\mathcal{U}$ with faces based at $\mathbf{0}$. Then the collection $\{\mathcal{R}(\underline{a})+\pi_c(\mathbf{x}) : (\mathbf{x},\underline{a})\in\Gamma_\mathcal{U}\}$, where $\Gamma_\mc{U}$ is defined by \eqref{gamma}, is a self-replicating tiling of $\K_c$.
\end{theorem}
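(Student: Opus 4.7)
The plan has three parts: establish (i) the self-replicating inflation structure of $\mathcal{T}:=\{\mathcal{R}(\und{a})+\pi_c(\mathbf{x}) : (\mathbf{x},\und{a})\in\Gamma_\mathcal{U}\}$, (ii) pairwise measure-disjointness of its tiles, and (iii) full pointwise covering of $\K_c$ by $T:=\bigcup\mathcal{T}$.

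First I would iterate the set equations of Proposition~\ref{prop:seteq}. Since by the very definition~\eqref{gamma} one has $\mathbf{E}^{d-1}(\sigma)^m(\Gamma_\mathcal{U})=\Gamma_\mathcal{U}$, the set equations applied $m$ times yield $T=M_\sigma^m T$, and more precisely show that each tile of $\mathcal{T}$ decomposes as a finite union of $M_\sigma^m$-scaled tiles of $\mathcal{T}$. This is the self-replicating inflate-and-subdivide structure asked of the tiling.

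Next I would establish the fundamental measure equality $\lambda(\mathcal{R}(\und{a}))=\lambda(\pi_c\overline{(\mathbf{0},\und{a})})$ by passing to the Hausdorff limit in Proposition~\ref{subrule}: the factor $\beta^k$ coming from the $k$-fold imperfect inflate-and-subdivide rule is exactly cancelled by the Jacobian factor $\beta^{-k}$ of the renormalization $M_\sigma^k$ on $\K_c$, see~\eqref{eq:contrac}. Measure-disjointness of the tiles of $\mathcal{T}$ then follows by a descent argument: every face of $\Gamma_\mathcal{U}$ descends from some face of $\mathcal{U}$ by iterating $\mathbf{E}^{d-1}(\sigma)^m$, so two distinct faces either share an ancestor in $\mathcal{U}$, in which case their tiles appear as distinct summands in the measure-disjoint decomposition of Proposition~\ref{prop:seteq}, or they descend from distinct ancestors in $\mathcal{U}$, in which case the strong coincidence condition (Proposition~\ref{SCC}) gives measure-disjointness at the root and the iterated set equations propagate it to all descendants.

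For pointwise covering, I would first observe that $\mathcal{T}$ is locally finite, so $T$ is closed: the base points $\{\pi_c(\mathbf{x}) : (\mathbf{x},\und{a})\in\Gamma_\mathcal{U}\}$ form a Delone set in $\K_c$ because $\pi_c(\overline{\Gamma_\mathcal{U}})$ is a polygonal tiling with uniformly bounded polygons (Theorem~\ref{thm:poly}), and the Rauzy fractals have uniformly bounded diameters. A measure-counting argument on balls $B_R\subset\K_c$ then compares the polygonal tiling (which covers $B_R$ with polygons of total area $\lambda(B_R)$ up to boundary corrections of order $R^{d-2}$) with the corresponding Rauzy fractals (whose total area equals that of the polygons by the measure equality above, and whose pairwise measure-disjointness was just established) to give the density estimate $\lambda(B_R\cap T)=\lambda(B_R)-O(R^{d-2})$. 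The self-similar invariance $M_\sigma^{-m}T=T$ then pulls this estimate across all scales, and combined with~\eqref{eq:contrac} it yields $\lambda(\K_c\setminus T)=0$. Since $T$ is closed, $\K_c\setminus T$ is an open subset of $\K_c$ with zero Lebesgue measure, hence empty, giving the covering $T=\K_c$.

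The main obstacle I expect is this final step: promoting the boundary-order local density estimate for $T$ to exact pointwise covering of $\K_c$ requires carefully coupling the measure-disjointness, the self-replication relation $M_\sigma^{-m}T=T$ and the topological closedness of $T$. The Delone structure of $\Gamma_\mathcal{U}$ inherited from the polygonal tiling of Theorem~\ref{thm:poly} is what makes $T$ closed in the first place, and that is what ultimately lets a null complement be ruled out by openness.
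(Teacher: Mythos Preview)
Your measure-disjointness argument (the descent from $\mathcal{U}$ using Proposition~\ref{SCC} at the seed and the measure-disjoint set equations of Proposition~\ref{prop:seteq} to propagate) is essentially the paper's own argument.

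The gap is in the measure equality $\lambda(\mathcal{R}(\und{a}))=\lambda(\pi_c\overline{(\mathbf{0},\und{a})})$. You cannot obtain it by ``passing to the Hausdorff limit in Proposition~\ref{subrule}'': Lebesgue measure is not continuous for the Hausdorff metric, so from $\lambda(\mathcal{R}_k(\und{a}))=\lambda(\pi_c\overline{(\mathbf{0},\und{a})})$ for every $k$ nothing follows about $\lambda(\mathcal{R}(\und{a}))$. What is available a priori (from Proposition~\ref{prop:seteq} and Lemma~\ref{lem:meseigenvec}) is only that the two vectors $(\lambda(\mathcal{R}(\und{a})))_{\und{a}}$ and $(\lambda(\pi_c\overline{(\mathbf{0},\und{a})}))_{\und{a}}$ are Perron eigenvectors of $|{}^tM_{d-1}|$, hence proportional; pinning the constant to $1$ is exactly the content of the tiling statement. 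Indeed the paper deduces this measure equality \emph{from} Theorem~\ref{thm:selfreptil} (see the proof of Theorem~\ref{thm:pertil}), not the other way round, so using it here is circular. Since your entire covering step~(iii) rests on this equality, it collapses.

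The paper's route to covering is shorter and bypasses the measure equality altogether. From the set equations one gets
\[
M_\sigma^{-mk}\bigcup_{(\mathbf{0},\und{a})\in\mathcal{U}}\mathcal{R}(\und{a})
=\bigcup_{(\mathbf{x},\und{b})\in\mathbf{E}^{d-1}(\sigma)^{mk}(\mathcal{U})}\bigl(\mathcal{R}(\und{b})+\pi_c(\mathbf{x})\bigr),
\]
so $T$ is the increasing union $\bigcup_{k\ge 0}M_\sigma^{-mk}R_0$ with $R_0=\bigcup_{(\mathbf{0},\und{a})\in\mathcal{U}}\mathcal{R}(\und{a})$. The seed $R_0$ has nonempty interior (Proposition~\ref{prop:prop}), $M_\sigma^{-m}$ is expanding on $\K_c$, and the geometric finiteness property guarantees that this inflation process exhausts $\K_c$. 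No density estimate on balls, no closedness of $T$, and no self-similarity bootstrap are needed.
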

\begin{proof}
Start with the union of the $\mathcal{R}(\und{a})$ such that $(\mathbf{0},\und{a})\in \mathcal{U}$. By the strong coincidence condition they are measure disjoint.
By inflating each of them repeatedly by $M_\sigma^{-1}$ and applying the set equations~\eqref{eq:seteq} we get a measure-disjoint union of subtiles. The geometric finiteness property for $\mathcal{U}$ ensures that this process of inflation and subdivision on $\bigcup_{(\mathbf{0},\und{a})\in\mathcal{U}} \mathcal{R}(\und{a})$ will cover all $\mathbb{K}_c$. It remains to prove the measure disjointness of any two $\mathcal{R}(\und{a}_1) + \pi_c(\mathbf{x}_1) \subseteq M_\sigma^{-k}\mathcal{R}(\und{a})$, $\mathcal{R}(\und{a}_2) + \pi_c(\mathbf{x}_2)  \subseteq M_\sigma^{-k}\mathcal{R}(\und{b})$ with $(\mathbf{0},\und{a})$, $(\mathbf{0},\und{b})$ different elements in $\mathcal{U}$. But this is true since $\mathcal{R}(\und{a})$ and $\mathcal{R}(\und{b})$ are measure disjoint by the strong coincidence condition, and a posteriori the same holds for their inflations by $M_\sigma^{-k}$. 
\end{proof}

\begin{figure}[h]
\includegraphics[scale=0.45]{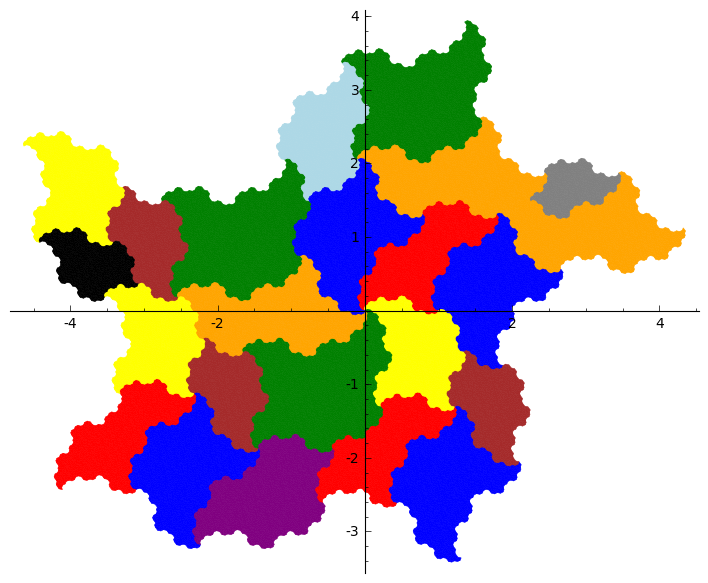}
\caption{A patch of the self-replicating tiling $\{\mathcal{R}(\und{a})+\pi_c(\mathbf{x}): (\mathbf{x},\und{a})\in \Gamma_\mathcal{U}\}$ for $\sigma_0$, where $\mathcal{U}$ is as in Figure~\ref{5patch}.}
\end{figure}

One of the main novelties is that we obtain natural periodic tilings by our new Rauzy fractals starting from periodic polygonal tilings.

A {\it periodic element} is a geometric element $\mathcal{P}\subset\mathcal{S}$ which, translated by a set of points $\Lambda_\mathcal{P}\subset \mathbb{Z}^n$ such that $\pi_c(\Lambda_\mathcal{P})$ is a lattice, forms a stepped surface. In this case we will call the latter a {\it periodic stepped surface}.
Examples of periodic stepped surfaces are:
\begin{enumerate}
\item A single $(d-1)$-dimensional face $(\mathbf{0},a_1\wedge\cdots\wedge a_{d-1})$ together with  $\Lambda_\mathcal{P} = \mathbf{e}_{a_1}\mathbb{Z}+\cdots+\mathbf{e}_{a_{d-1}}\mathbb{Z}$.
\item A \emph{touching pair}, that is, two faces projecting well  whose supports share a $(d-2)$-dimensional face: $(\mathbf{0},b\wedge a_2\wedge \cdots \wedge a_{d-1}) + (\mathbf{0},c\wedge a_2\wedge\cdots\wedge a_{d-1})$, $b\neq c$. The associated set is $\Lambda_\mathcal{P}=(\mathbf{e}_b-\mathbf{e}_c)\Z+\sum_{i=2}^{d-1}\mathbf{e}_{a_i}\Z$.
\item\label{item:dtouch} A $d$-\emph{touching} element, that is, $d=\deg(\beta)$ faces which are touching in pairs $\sum_{k=1}^d(\mathbf{0},a_1\wedge\cdots\wedge \widehat{a}_k\wedge\cdots\wedge a_d)$, where $\widehat{a}$ denotes that $a$ does not appear. The associated set is $\Lambda_\mathcal{P}=\sum_{i=2}^d (\mathbf{e}_{a_1}-\mathbf{e}_{a_i})\Z$.
\end{enumerate}

Let  $\mathcal{P}$ be $d$-touching such that its translations by $\Lambda_\mathcal{P}$ induce a polygonal periodic tiling. Let $\mathcal{R}_\mathcal{P} := \bigcup_{(\mathbf{0},\und{a})\in\mathcal{P}}\mathcal{R}(\und{a})$ and $\mathcal{R}_k(\und{a}):=M_\sigma^k\,\pi_c(\ov{\mathbf{E}^{d-1}(\sigma)^k(\mathbf{0},\und{a})})$.

\begin{proposition}\label{pcov}
Let $\sigma$ be a nice reducible substitution. For a periodic element $\mc{P}\subset\mc{S}$ we have that $\mathcal{R}_\mathcal{P}+\pi_c(\Lambda_\mathcal{P})$ is a periodic covering of $\mathbb{K}_c$.
\end{proposition}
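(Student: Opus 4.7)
The plan is to establish the covering at every finite approximation $A_k := M_\sigma^k\,\pi_c(\overline{\mathbf{E}^{d-1}(\sigma)^k(\mathcal{P})})$ of $\mathcal{R}_\mathcal{P}$ and then transport it across the Hausdorff limit $A_k\to\mathcal{R}_\mathcal{P}$. The key algebraic input is the translation-equivariance of $\mathbf{E}^{d-1}(\sigma)$ visible in \eqref{eq:Edm1}: for any base point $\mathbf{x}$ and any $\mathbf{y}\in\mathbb{Z}^n$,
\[
\mathbf{E}^{d-1}(\sigma)(\mathbf{x}+\mathbf{y},\und{a}^*) \;=\; \mathbf{E}^{d-1}(\sigma)(\mathbf{x},\und{a}^*) + M_\sigma^{-1}\mathbf{y}.
\]
Iterating and splitting the sum $\mathcal{P}+\Lambda_\mathcal{P}$ gives $\mathbf{E}^{d-1}(\sigma)^k(\mathcal{P}+\Lambda_\mathcal{P}) = \mathbf{E}^{d-1}(\sigma)^k(\mathcal{P}) + M_\sigma^{-k}\Lambda_\mathcal{P}$. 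Applying $M_\sigma^k$ to the $\pi_c$-projection (and using that $M_\sigma$ commutes with $\pi_c$) cancels the factor $M_\sigma^{-k}$ in front of $\Lambda_\mathcal{P}$, yielding the identity
\[
A_k + \pi_c(\Lambda_\mathcal{P}) \;=\; M_\sigma^k\,\pi_c\bigl(\overline{\mathbf{E}^{d-1}(\sigma)^k(\mathcal{P}+\Lambda_\mathcal{P})}\bigr).
\]

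The next step is to prove by induction on $k$ that $\pi_c(\overline{\mathbf{E}^{d-1}(\sigma)^k(\mathcal{P}+\Lambda_\mathcal{P})})=\mathbb{K}_c$, so that the right-hand side equals $M_\sigma^k\mathbb{K}_c=\mathbb{K}_c$ at every step. The base case $k=0$ is the periodicity assumption on $\mathcal{P}$. For the inductive step the plan is to re-run the argument of Proposition~\ref{stepstep} on $\mathbf{E}^{d-1}(\sigma)^k(\mathcal{P}+\Lambda_\mathcal{P})$. The observation that makes this work is that the Properties (S1), (S2) and the no-cancellation consequence of (P) used in that proof depend only on the \emph{types} of faces and on the local configurations of touching pairs, both of which are invariant under the translation $\mathbf{x}\mapsto\mathbf{x}+\mathbf{y}$. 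Thus, although $\mathbf{E}^{d-1}(\sigma)^k(\mathcal{P}+\Lambda_\mathcal{P})$ need not lie in $\mathcal{S}$ (the $\pi_e$-coordinates in $\Lambda_\mathcal{P}$ may be unbounded), the non-overlap, non-cancellation and boundary-matching steps of Proposition~\ref{stepstep} apply verbatim to its image under $\mathbf{E}^{d-1}(\sigma)$.

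It remains to pass to the Hausdorff limit. Fix $x\in\mathbb{K}_c$ and, for each $k$, write $x = y_k + \pi_c(\lambda_k)$ with $y_k\in A_k$ and $\lambda_k\in\Lambda_\mathcal{P}$. Since $A_k\to\mathcal{R}_\mathcal{P}$ in the Hausdorff metric and $\mathcal{R}_\mathcal{P}$ is compact by Proposition~\ref{prop:prop}, the $A_k$ lie in a common bounded neighbourhood of $\mathcal{R}_\mathcal{P}$; hence $\pi_c(\lambda_k)$ stays in a bounded part of the discrete lattice $\pi_c(\Lambda_\mathcal{P})$ and takes only finitely many values. Along a subsequence where $\pi_c(\lambda_k)=\pi_c(\lambda_\infty)$ is constant, $y_k\to x-\pi_c(\lambda_\infty)$, and the Hausdorff convergence forces $x-\pi_c(\lambda_\infty)\in\mathcal{R}_\mathcal{P}$; hence $x\in\mathcal{R}_\mathcal{P}+\pi_c(\Lambda_\mathcal{P})$. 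The main obstacle in this plan is the inductive step of the second paragraph: one must check carefully that Proposition~\ref{stepstep}, though formally stated for $\Gamma\subset\mathcal{S}$, really relies only on translation-invariant data about the local face pattern, and this is where most of the attention should be devoted.
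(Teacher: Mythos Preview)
Your proof is correct and follows essentially the same strategy as the paper's: show that each renormalized approximation $A_k + \pi_c(\Lambda_\mathcal{P})$ covers $\mathbb{K}_c$ (the paper does this by invoking Proposition~\ref{stepstep} together with the eigenvector identity $\lambda(A_k)=\lambda(\pi_c\overline{\mathcal{P}})$, concluding that the approximations are in fact periodic \emph{tilings}), and then pass to the Hausdorff limit. Your explicit treatment of the translation-invariance needed to apply Proposition~\ref{stepstep} outside $\mathcal{S}$, and your compactness argument for the limit step, are points the paper leaves implicit.
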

\begin{proof}
Since by Proposition~\ref{stepstep} $\pi_c(\ov{\mathbf{E}^{d-1}(\sigma)(\mathcal{P})}) + \pi_c(M^{-1}_\sigma\Lambda_\mathcal{P})$ is again a polygonal tiling, we have that
\[ \lambda\big(M_\sigma^k\,\pi_c(\ov{\mathbf{E}^{d-1}(\sigma)^k(\mathbf{x},\und{a})}) \cap M_\sigma^k\,\pi_c(\ov{\mathbf{E}^{d-1}(\sigma)^k(\mathbf{y},\und{b})}\big)= 0 \] for any two faces $(\mathbf{x},\und{a})$, $(\mathbf{y},\und{b})\in\mathcal{P}+\Lambda_\mathcal{P}$ and any positive integer $k$. Furthermore $\lambda(\pi_c(M_\sigma^k\,\ov{\mathbf{E}^{d-1}(\sigma)^k(\mathbf{x},\und{a})})) = \lambda(\pi_c\ov{(\mathbf{x},\und{a})})$, since $(\lambda(\pi_c\ov{(\mathbf{0},\und{a})}))_{\und{a}\in\mathcal{O}_{d-1}}$ is a Perron-Frobenius eigenvector of $|{}^t M_{d-1}|$ associated with $\beta$. Thus $\bigcup_{(\mathbf{0},\und{a})\in\mathcal{P}}\mathcal{R}_k(\und{a})+\pi_c(\Lambda_\mathcal{P})$ is a periodic tiling, and since $\mathcal{R}_\mathcal{P}$ is the Hausdorff limit of the approximations $\bigcup_{(\mathbf{0},\und{a})\in\mathcal{P}}\mathcal{R}_k(\und{a})$ it follows that $\mathcal{R}_\mathcal{P}+\pi_c(\Lambda_\mathcal{P})$ is a covering. 
\end{proof}

\begin{theorem}\label{thm:pertil}
Let $\sigma$ be a nice reducible substitution such that the strong coincidence condition and the geometric finiteness property for the periodic element $\mathcal{P}\in\mathcal{S}$ hold. Then $\mathcal{R}_\mathcal{P} + \pi_c(\Lambda_\mathcal{P})$ is a periodic tiling.
\end{theorem}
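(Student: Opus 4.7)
The plan is to reduce the tiling property to an equality of measures. Recall that the periodic covering $\mathcal{R}_\mathcal{P}+\pi_c(\Lambda_\mathcal{P})$ of $\mathbb{K}_c$ provided by Proposition~\ref{pcov} is a tiling precisely when the multiplicity function $m(\mathbf{y})=\#\{\mathbf{x}\in\Lambda_\mathcal{P}:\mathbf{y}\in\mathcal{R}_\mathcal{P}+\pi_c(\mathbf{x})\}$ equals $1$ almost everywhere. Integrating $m\ge 1$ over a fundamental domain $F$ of $\pi_c(\Lambda_\mathcal{P})$ yields $\lambda(\mathcal{R}_\mathcal{P})\ge \lambda(F)$, and the theorem reduces to proving the reverse inequality. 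Since $\pi_c(\overline{\mathcal{P}})+\pi_c(\Lambda_\mathcal{P})$ is the polygonal tiling we already have, $\lambda(F)=\sum_{(\mathbf{0},\und{a})\in\mathcal{P}}\lambda(\pi_c\overline{(\mathbf{0},\und{a})})$; and the strong coincidence condition, via Proposition~\ref{SCC}, yields $\lambda(\mathcal{R}_\mathcal{P})=\sum_{(\mathbf{0},\und{a})\in\mathcal{P}}\lambda(\mathcal{R}(\und{a}))$. Hence it suffices to show $\lambda(\mathcal{R}(\und{a}))=\lambda(\pi_c\overline{(\mathbf{0},\und{a})})$ for every $\und{a}\in\mathcal{O}_{d-1}$.

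To identify these measures, I would apply Lebesgue measure to the set equation of Proposition~\ref{prop:seteq}. Since the union on the right-hand side is measure-disjoint and $M_\sigma$ contracts $\mathbb{K}_c$-volumes by $\beta^{-1}$ (see \eqref{eq:contrac}), this gives
\[
\beta\,\lambda(\mathcal{R}(\und{a}))=\sum_{\und{b}\in\mathcal{O}_{d-1}}|(M_{d-1})_{\und{b}\,\und{a}}|\,\lambda(\mathcal{R}(\und{b})),
\]
so that the positive vector $(\lambda(\mathcal{R}(\und{a})))_{\und{a}}$ is a left eigenvector of $|M_{d-1}|$ for the Perron eigenvalue $\beta$ (Lemma~\ref{le:modone}). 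By Lemma~\ref{lem:meseigenvec}, the vector $(\lambda(\pi_c\overline{(\mathbf{0},\und{a})}))_{\und{a}}$ is another positive left eigenvector of $|M_{d-1}|$ for $\beta$. Primitivity of $|M_{d-1}|$, ensured by hypothesis (P), forces the existence of a constant $c>0$ with $\lambda(\mathcal{R}(\und{a}))=c\,\lambda(\pi_c\overline{(\mathbf{0},\und{a})})$ for every $\und{a}$.

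The main obstacle is showing that $c=1$. My plan is to compare the two tilings of $\mathbb{K}_c$ that sit on the same stepped surface $\Gamma_\mathcal{P}$: the polygonal tiling $\pi_c(\overline{\Gamma_\mathcal{P}})$ of Theorem~\ref{thm:poly} and the aperiodic Rauzy fractal tiling $\{\mathcal{R}(\und{a})+\pi_c(\mathbf{x}):(\mathbf{x},\und{a})\in\Gamma_\mathcal{P}\}$ of Theorem~\ref{thm:selfreptil}. For a large ball $B\subset\mathbb{K}_c$, let $N_{\und{a}}(B)$ count the faces of type $\und{a}$ in $\Gamma_\mathcal{P}$ whose base points project into $B$. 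Both tilings consist of uniformly bounded pieces, so that
\[
\lambda(B)=\sum_{\und{a}}N_{\und{a}}(B)\,\lambda(\pi_c\overline{(\mathbf{0},\und{a})})+O(|\partial B|)=\sum_{\und{a}}N_{\und{a}}(B)\,\lambda(\mathcal{R}(\und{a}))+O(|\partial B|),
\]
with boundary errors controlled by the maximum diameter of a face or Rauzy subtile. Substituting $\lambda(\mathcal{R}(\und{a}))=c\,\lambda(\pi_c\overline{(\mathbf{0},\und{a})})$ in the second expression, dividing by $\lambda(B)$ and letting the radius of $B$ tend to infinity, the two expressions can only agree if $c=1$. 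Combining with the preceding paragraphs gives $\lambda(\mathcal{R}_\mathcal{P})=\lambda(F)$, so the covering of Proposition~\ref{pcov} is in fact a tiling, as desired.
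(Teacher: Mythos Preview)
Your proof is correct and follows essentially the same route as the paper. The paper's proof also invokes Theorem~\ref{thm:poly} and Theorem~\ref{thm:selfreptil} to obtain two tilings of $\mathbb{K}_c$ indexed by the same set $\Gamma_\mathcal{P}$, concludes from this that $\lambda(\mathcal{R}(\und{a}))=\lambda(\pi_c\overline{(\mathbf{0},\und{a})})$ for all $\und{a}$, and then combines with the covering of Proposition~\ref{pcov}; your Perron-eigenvector step and large-ball density computation simply make explicit the passage from ``two tilings on the same translation set'' to ``equal tile measures'' that the paper states in one line.
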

\begin{proof}
If the geometric finiteness property for $\mathcal{P}$ holds then $\{\mathcal{R}(\underline{a})+\pi_c(\mathbf{x}) : (\mathbf{x},\underline{a})\in\Gamma_\mathcal{P}\}$ and $\pi_c(\Gamma_\mathcal{P})$ are tilings with the same translation set, where $\Gamma_\mathcal{P}$ is defined by~\eqref{gamma}. This implies that $\lambda(\mathcal{R}(\und{a}))=\lambda(\pi_c\ov{(\mathbf{0},\und{a})})$, for all $\und{a}$, in particular for those appearing in $\mathcal{P}$, and we know $\mathcal{R}_\mathcal{P} + \pi_c(\Lambda_\mathcal{P})$ is a covering by Proposition \ref{pcov}. Therefore $\mathcal{R}_\mathcal{P}+\pi_c(\Lambda_\mathcal{P})$ is a tiling.
\end{proof}

\begin{figure}
\includegraphics[scale=0.5]{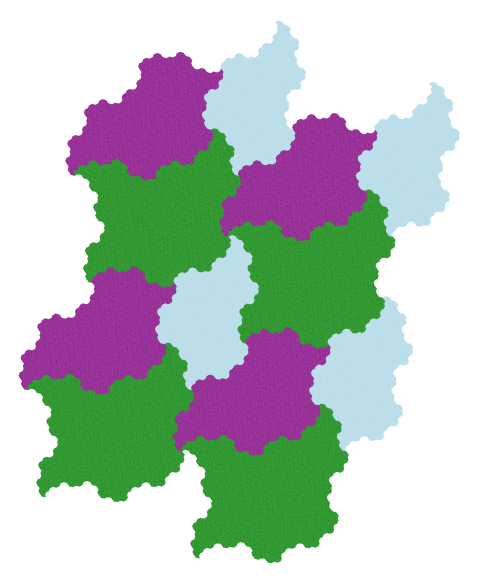}
\caption{A patch of the periodic tiling $\mathcal{R}_\mathcal{P}+\pi_c(\Lambda_\mathcal{P})$ induced by the Hokkaido substitution $\sigma_0$ of Section~\ref{sec:hok}, for the $3$-touching element $\mathcal{P}=(\mathbf{0},2\wedge 3)+(\mathbf{0},2\wedge 4)+(\mathbf{0},3\wedge 4)$ and with $\Lambda_\mathcal{P}=(\mathbf{e}_4-\mathbf{e}_2)\Z + (\mathbf{e}_4-\mathbf{e}_3)\Z$.\label{figper3}}
\end{figure}

The following proposition asserts that we get a periodic tiling whenever the boundaries of the approximations converge to the boundary of the Rauzy fractal (cf.~\cite[Theorem~3.3]{IR06} for the irreducible settings).
\begin{proposition}\label{limboundary}
Let $\sigma$ be a nice reducible substitution such that the strong coincidence condition and the geometric finiteness property for the periodic element $\mathcal{P}\in\mathcal{S}$ hold.
The collection $\mathcal{R}_\mathcal{P} + \pi_c(\Lambda_\mathcal{P})$ is a periodic tiling of $\K_c$ if and only if $\lim_{k\to\infty} \partial \mathcal{R}_k(\und{a}) = \partial \mathcal{R}(\und{a})$, $\forall\,\und{a}\in \mathcal{P}$.
\end{proposition}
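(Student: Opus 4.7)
\medskip

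\noindent\textbf{Proof proposal.} The plan is to pivot both directions on the measure identity
$\lambda(\mathcal{R}(\und{a}))=\lambda(\pi_c\ov{(\mathbf{0},\und{a})})$ for every $\und{a}$ with $(\mathbf{0},\und{a})\in\mathcal{P}$, using the polygonal tiling $\bigcup_{(\mathbf{0},\und{a})\in\mathcal{P}}\mathcal{R}_k(\und{a})+\pi_c(\Lambda_\mathcal{P})$ of Proposition~\ref{pcov} as a bridge, together with the fact that $\lambda(\mathcal{R}_k(\und{a}))=\lambda(\pi_c\ov{(\mathbf{0},\und{a})})$ for all $k$ (Perron--Frobenius identity used in that proof). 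The covering half of the tiling statement is already supplied by Proposition~\ref{pcov}, so only the measure disjointness has to be analyzed.

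For the direction $(\Leftarrow)$, assuming $\partial\mathcal{R}_k(\und{a})\to\partial\mathcal{R}(\und{a})$ in Hausdorff metric, I would show that the symmetric differences $\mathcal{R}(\und{a})\triangle\mathcal{R}_k(\und{a})$ shrink into arbitrarily thin neighborhoods of $\partial\mathcal{R}(\und{a})$. The key geometric observation is that if $y\in\mathcal{R}(\und{a})\setminus\mathcal{R}_k(\und{a})$, the Hausdorff relation $\mathcal{R}(\und{a})\subseteq B(\mathcal{R}_k(\und{a}),\epsilon_k)$ forces the nearest point of $\mathcal{R}_k(\und{a})$ to $y$ to lie on $\partial\mathcal{R}_k(\und{a})$ (an interior nearest point would be beaten by a nearby point of $\mathcal{R}_k(\und{a})$ closer to $y$), and by boundary convergence that boundary point is within $\epsilon_k$ of $\partial\mathcal{R}(\und{a})$; a symmetric argument treats $\mathcal{R}_k(\und{a})\setminus\mathcal{R}(\und{a})$. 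Since $\lambda(\partial\mathcal{R}(\und{a}))=0$ by Proposition~\ref{prop:prop}, outer regularity yields $\lambda(\mathcal{R}(\und{a}))=\lim_k\lambda(\mathcal{R}_k(\und{a}))=\lambda(\pi_c\ov{(\mathbf{0},\und{a})})$. Summing over $\mathcal{P}$ gives that the total mass of one period of the covering equals the volume of the fundamental domain of $\pi_c(\Lambda_\mathcal{P})$ (which is the area of the corresponding polygonal tile, by periodicity of $\mathcal{P}$); this density-one covering is forced to be a tiling.

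For the direction $(\Rightarrow)$, assuming $\mathcal{R}_\mathcal{P}+\pi_c(\Lambda_\mathcal{P})$ is a tiling, I would argue by contradiction in two cases. Case 1: suppose $\epsilon>0$ and $x_k\in\partial\mathcal{R}_k(\und{a})$ satisfy $d(x_k,\partial\mathcal{R}(\und{a}))\ge\epsilon$; extract a convergent subsequence $x_k\to x_\infty$, which by Hausdorff convergence lies in $\mathcal{R}(\und{a})$ but at distance $\ge\epsilon$ from its boundary, hence in $\mathrm{int}(\mathcal{R}(\und{a}))$. Because $x_k$ is shared with some other polygonal tile $\mathcal{R}_k(\und{b}_k)+\pi_c(\mathbf{x}_k)$ of the approximating tiling, and only finitely many choices of $(\mathbf{x}_k,\und{b}_k)\in(\mathcal{P}+\Lambda_\mathcal{P})\setminus\{(\mathbf{0},\und{a})\}$ are possible near $x_\infty$ (uniform boundedness of the $\mathcal{R}_k(\und{a})$ plus local finiteness of the lattice), refine to a subsequence with $(\mathbf{x},\und{b})$ constant; passing to the Hausdorff limit puts $x_\infty$ in $\mathcal{R}(\und{b})+\pi_c(\mathbf{x})$. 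Invoking $\mathcal{R}(\und{b})+\pi_c(\mathbf{x})=\overline{\mathrm{int}(\mathcal{R}(\und{b})+\pi_c(\mathbf{x}))}$ from Proposition~\ref{prop:prop} produces an interior point of this tile inside the open set $\mathrm{int}(\mathcal{R}(\und{a}))$, contradicting the tiling hypothesis. Case 2: suppose $y_k\in\partial\mathcal{R}(\und{a})$ satisfy $d(y_k,\partial\mathcal{R}_k(\und{a}))\ge\epsilon$; along a convergent subsequence $y_k\to y_\infty\in\partial\mathcal{R}(\und{a})$, the ball $B(y_\infty,\epsilon/2)$ is disjoint from $\partial\mathcal{R}_k(\und{a})$, so it is either entirely inside or entirely outside $\mathcal{R}_k(\und{a})$; the ``outside'' alternative contradicts $y_\infty\in\mathcal{R}(\und{a})\subseteq B(\mathcal{R}_k(\und{a}),\epsilon_k)$, while the ``inside'' alternative combined with the other Hausdorff inclusion forces $B(y_\infty,\epsilon/2)\subseteq\mathcal{R}(\und{a})$, contradicting $y_\infty\in\partial\mathcal{R}(\und{a})$.

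I expect the main obstacle to be the extraction step in Case~1 of the forward direction: one needs to pin down a fixed neighboring label $(\mathbf{x},\und{b})$ along a subsequence, which requires a uniform diameter bound on the $\mathcal{R}_k(\und{a})$ (available from Hausdorff convergence to the compact $\mathcal{R}(\und{a})$) and a careful use of the fact that the polygonal tiling $\bigcup_k(\mathcal{R}_k(\und{a})+\pi_c(\Lambda_\mathcal{P}))$ is locally finite with shapes drawn from the finite set $\{\pi_c\ov{(\mathbf{0},\und{a})}:\und{a}\in\mathcal{P}\}$. Once $(\mathbf{x},\und{b})$ is fixed, the rest is a clean contradiction between Proposition~\ref{prop:prop}(2) and the tiling property.
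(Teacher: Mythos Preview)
Your backward direction $(\Leftarrow)$ is essentially the paper's argument: both trap $\mathcal{R}(\und{a})\setminus\mathcal{R}_k(\und{a})$ in an $\varepsilon$-collar of $\partial\mathcal{R}_k(\und{a})$, push this into a $2\varepsilon$-collar of $\partial\mathcal{R}(\und{a})$ via the assumed boundary convergence, and conclude $\lambda(\mathcal{R}(\und{a}))=\lambda(\mathcal{R}_k(\und{a}))=\lambda(\pi_c\ov{(\mathbf{0},\und{a})})$ using $\lambda(\partial\mathcal{R}(\und{a}))=0$; the covering of Proposition~\ref{pcov} then becomes a tiling by the volume count.

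Your forward direction $(\Rightarrow)$ is correct but follows a genuinely different route from the paper. The paper works at the inflated scale: since both $M_\sigma^{-k}\mathcal{R}_\mathcal{P}$ and $M_\sigma^{-k}\bigcup_{\mathcal{P}}\mathcal{R}_k(\und{a})$ are fundamental domains for the \emph{same} lattice $M_\sigma^{-k}\pi_c(\Lambda_\mathcal{P})$, a ball of radius $R>C$ contained in one forces a ball of radius $R-C$ in the other, where $C$ is the fixed sum of diameters; this yields a uniform bound $d_H(\partial M_\sigma^{-k}\mathcal{R}_\mathcal{P},\partial M_\sigma^{-k}\bigcup_\mathcal{P}\mathcal{R}_k(\und{a}))<2C$, and contracting by $M_\sigma^{k}$ gives the convergence. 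You instead run a direct compactness-and-contradiction argument at the original scale, splitting into the two Hausdorff half-distances and, in Case~1, extracting a fixed neighbouring label $(\mathbf{x},\und{b})$ to manufacture an interior overlap forbidden by the tiling and Proposition~\ref{prop:prop}(2). Your approach is more hands-on and has the virtue of tracking the individual subtiles $\mathcal{R}(\und{a})$ throughout (which is exactly what the statement asks for), at the price of the subsequence bookkeeping you flagged; the paper's approach is shorter and exploits the ``two fundamental domains for one lattice'' structure, but as written literally controls the boundary of the union $\mathcal{R}_\mathcal{P}$ rather than each $\mathcal{R}(\und{a})$ separately.
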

\begin{proof} 
Assume $\lim_{k\to\infty} \partial \mathcal{R}_k(\und{a}) = \partial \mathcal{R}(\und{a})$. Then, since $\mathcal{R}_k(\und{a})\to \mathcal{R}(\und{a})$, we have that for any $\varepsilon > 0$ there exists $k=k(\varepsilon)$ such that 
\[ d_H(\mathcal{R}(\und{a}),\mathcal{R}_k(\und{a})) < \varepsilon\quad\text{and}\quad d_H(\partial\mathcal{R}(\und{a}),\partial\mathcal{R}_k(\und{a})) < \varepsilon. \]
By the former, for any $z \in \mathcal{R}(\und{a})\setminus \mathcal{R}_k(\und{a})$ there exists $z'\in B(z,\varepsilon)\cap \mathcal{R}_k(\und{a})$, which implies that the line segment from $z$ to $z'$ must intersect $\partial\mathcal{R}_k(\und{a})$. Hence there exists $z''\in\partial\mathcal{R}_k(\und{a})$ such that $|z-z''|<\varepsilon$, and
\[ \mathcal{R}(\und{a}) \subseteq \mathcal{R}_k(\und{a}) \cup [\partial\mathcal{R}_k(\und{a})]_\varepsilon, \] where $[X]_\varepsilon = \{x : |x-y|<\varepsilon \text{ for some } y \in X\}$. 
Since the approximations $\mathcal{R}_k(\und{a})$ have for every $k$ the same measure as the projected faces $\pi_c\ov{(\mathbf{0},\und{a})}$ and $\mathcal{R}_\mathcal{P}+\pi_c(\Lambda_\mathcal{P})$ is a covering we have $\lambda(\mathcal{R}_\mathcal{P})/\lambda(\bigcup_{(\mathbf{0},\und{a})\in\mathcal{P}}\mathcal{R}_k(\und{a})) \ge 1$. Thus we get equality if $\lim_{\varepsilon\to 0} \lambda([\partial\mathcal{R}_k(\und{a})]_\varepsilon) = 0$. But the inequality $d_H(\partial\mathcal{R}(\und{a}),\partial\mathcal{R}_k(\und{a})) < \varepsilon$ implies that $[\partial\mathcal{R}_k(\und{a})]_\varepsilon \subseteq [\partial\mathcal{R}(\und{a})]_{2\varepsilon}$ and $\lim_{\varepsilon\to 0} \lambda([\partial\mathcal{R}(\und{a})]_{2\varepsilon})= 0$ since $\partial\mathcal{R}(\und{a})$ has measure zero.

Let 
\[ C = \text{diam}(\mathcal{R}_{\mathcal{P}}) + \text{diam}\bigg(\bigcup_{(\mathbf{0},\und{a})\in\mathcal{P}} \pi_c\ov{(\mathbf{0},\und{a})}\bigg), \]
and suppose that $\mathcal{R}_\mathcal{P}+\pi_c(\Lambda_\mathcal{P})$ is a tiling. Then $M_\sigma^{-k}\mathcal{R}_\mathcal{P}$ and $M_\sigma^{-k}\bigcup_{(\mathbf{0},\und{a})\in\mathcal{P}}\mathcal{R}_k(\und{a})$ tile both $\mathbb{K}_c$ modulo $M_\sigma^{-k}\pi_c(\Lambda_\mathcal{P})$. 
This implies that if $B(z,R)\subset M_\sigma^{-k}\mathcal{R}_\mathcal{P}$, for some $z\in \mathbb{K}_c$ and $R>C$, then $B(z,R-C)\subset M_\sigma^{-k}\bigcup_{(\mathbf{0},\und{a})\in\mathcal{P}}\mathcal{R}_k(\und{a})$. Similarly, if $B(z,R)\subset M_\sigma^{-k}\bigcup_{(\mathbf{0},\und{a})\in\mathcal{P}}\mathcal{R}_k(\und{a})$ for some $z\in \mathbb{K}_c$ and $R>C$, then $B(z,R-C)\subset M_\sigma^{-k}\mathcal{R}_\mathcal{P}$. 
Hence 
\[ d_H(\partial(M_\sigma^{-k}\mathcal{R}_\mathcal{P}),\partial\bigg(\bigcup_{(\mathbf{0},\und{a})\in\mathcal{P}} M_\sigma^{-k}\mathcal{R}_k(\und{a})\bigg))<2C,  \] and the result follows.
\end{proof}

\noindent {\bf Main example.} 

\begin{corollary}\label{cor:tilings}
Let $\sigma_t$ be a substitution of the family \eqref{fam} and let $\mathcal{U}$ be a geometric element which projects well containing a $3$-touching element. Then the collection $\{\mathcal{R}(\und{a})+\pi_c(\mathbf{x}) : (\mathbf{x},\und{a})\in\Gamma_\mathcal{U}\}$ is a self-replicating tiling of $\mathbb{K}_c\cong \mathbb{C}$. Furthermore, if $\pi_c(\overline{\mathcal{U}})$ tiles periodically $\mathbb{K}_c$, then $\mathcal{R}_\mathcal{U}$ tiles periodically $\mathbb{K}_c$. Hence, $\mathcal{R}_\mathcal{U}$ is a fundamental domain for the torus $\mathbb{T}^2$ and the domain exchange $E_\mathcal{U}$ projects to a translation on it.
\end{corollary}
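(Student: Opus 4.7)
The plan is to assemble the general tiling theorems of Section~\ref{sec:rauzyfrac} specialised to the family~\eqref{fam}, once all required hypotheses have been verified for $\mathcal{U}$.

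First, I would invoke Proposition~\ref{prop:nice} to get that each $\sigma_t$ is a nice reducible substitution, and Proposition~\ref{prop:scc} to get the strong coincidence condition. The only hypothesis left to check is the geometric finiteness property for $\mathcal{U}$ itself; Proposition~\ref{propfam}\eqref{fam2} establishes it only for a $3$-touching element $\mathcal{U}_0 \subseteq \mathcal{U}$, with exponent $m=15$. I would then observe that positivity (hypothesis~(P)) prevents any cancellation in the images $\mathbf{E}^2(\sigma_t)^{15k}(\mathcal{U})$, so the inclusion $\mathcal{U}_0 \subseteq \mathcal{U}$ propagates under iteration, giving $\Gamma_{\mathcal{U}_0} \subseteq \Gamma_\mathcal{U}$; in particular $\pi_c(\overline{\Gamma_\mathcal{U}})$ still covers $\mathbb{K}_c$. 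Iterating the inclusion $\mathcal{U} \subseteq \mathbf{E}^2(\sigma_t)^5(\mathcal{U})$ of Proposition~\ref{propfam}\eqref{fam1} three times yields $\mathcal{U} \subseteq \mathbf{E}^2(\sigma_t)^{15}(\mathcal{U})$, so the invariance condition of Definition~\ref{def:geomprop} holds with $m=15$. Theorem~\ref{thm:selfreptil} then produces the self-replicating tiling $\{\mathcal{R}(\und{a})+\pi_c(\mathbf{x}) : (\mathbf{x},\und{a})\in\Gamma_\mathcal{U}\}$ of $\mathbb{K}_c \cong \mathbb{C}$.

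For the periodic statement, the hypothesis that $\pi_c(\overline{\mathcal{U}})$ tiles $\mathbb{K}_c$ periodically amounts precisely to saying that $\mathcal{U}$ is itself a periodic element in the sense of Section~\ref{sec:apertil}, with an associated translation set $\Lambda_\mathcal{U} \subset \mathbb{Z}^n$ whose projection $\pi_c(\Lambda_\mathcal{U})$ is a rank-$2$ lattice in $\mathbb{K}_c \cong \mathbb{R}^2$ (forced because the tiled space is two-dimensional). Having already checked the geometric finiteness and strong coincidence hypotheses, I would apply Theorem~\ref{thm:pertil} directly to conclude that $\mathcal{R}_\mathcal{U} + \pi_c(\Lambda_\mathcal{U})$ is a periodic tiling, so $\mathcal{R}_\mathcal{U}$ is a fundamental domain for the torus $\mathbb{T}^2 = \mathbb{K}_c / \pi_c(\Lambda_\mathcal{U})$.

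Finally, to deduce the toral translation, I would observe that $E_\mathcal{U}$ acts on each subtile $\mathcal{R}(\und{a}) \subseteq \mathcal{R}_\mathcal{U}$ as translation by a vector in $\pi_c(\mathbb{Z}^n)$ dictated by the self-replicating structure on $\Gamma_\mathcal{U}$; the periodicity of $\mathcal{R}_\mathcal{U} + \pi_c(\Lambda_\mathcal{U})$ then forces any two such translation vectors to agree modulo $\pi_c(\Lambda_\mathcal{U})$, so $E_\mathcal{U}$ descends to a single translation of $\mathbb{T}^2$. The hardest step in the whole argument is the transfer of the geometric finiteness property from $\mathcal{U}_0$ to the larger $\mathcal{U}$: this is where positivity~(P) is essential, since without it cancellation could erase portions of the iterated $\mathcal{U}_0$ inside the iterated $\mathcal{U}$ and destroy the covering of $\mathbb{K}_c$.
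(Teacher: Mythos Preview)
Your proof is correct and matches the paper's approach: assemble Theorems~\ref{thm:selfreptil} and~\ref{thm:pertil} once Propositions~\ref{prop:nice}, \ref{propfam}, and~\ref{prop:scc} have verified the hypotheses. You are in fact more careful than the paper in extending geometric finiteness from the $3$-touching $\mathcal{U}_0$ to the larger $\mathcal{U}$; note only that the non-cancellation you need between images of distinct faces comes from (S2) together with (P), not (P) alone, and that in the final step the translation vectors $\pi_c(\mathbf{e}_\ell)$ agree modulo $\pi_c(\Lambda_\mathcal{U})$ for the purely algebraic reason that $\Lambda_\mathcal{U}$ is generated by differences $\mathbf{e}_{a_i}-\mathbf{e}_{a_j}$, rather than as a consequence of the periodic tiling.
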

\begin{proof}
This is a direct consequence of Theorem~\ref{thm:selfreptil} and \ref{thm:pertil}, since we showed in Proposition~\ref{prop:nice} that each substitution $\sigma_t$ is a nice reducible substitution, in Proposition~\ref{propfam} that $\sigma_t$ satisfies the geometric finiteness property for every $3$-touching $\mathcal{U}$, and in Proposition~\ref{prop:scc} that the strong coincidence condition for $\sigma_t$ holds. 
\end{proof}

\subsection{Domain exchange}

Let  $\mathcal{P}$ be $d$-touching with associated set $\Lambda_\mathcal{P}$ and let $\mathcal{A}_\mathcal{P}$ be its alphabet, consisting of the single letters appearing in the types of faces of $\mathcal{P}$.
If the strong coincidence condition holds, then the components $\mathcal{R}(\und{a})$ of $\mathcal{R}_\mathcal{P}$ are pairwise measure disjoint by Proposition~\ref{SCC}. Therefore we can define $\lambda$-a.e. the \emph{domain exchange} on $\mathcal{R}_\mathcal{P}$ as
\begin{equation}
E_\mathcal{P} : \mathbf{x}  \mapsto \mathbf{x} + \pi_c(\mathbf{e}_\ell), \quad \mathbf{x}\in\mathcal{R}(\und{a}),
\end{equation}
for $\ell\in\mathcal{A}_\mathcal{P}\setminus\{a_1,\ldots,a_{d-1}\}$, $\und{a}=a_1\wedge\cdots\wedge a_{d-1}$.

If additionally the geometric finiteness property for $\mathcal{P}$ holds, then, by Theorem~\ref{thm:pertil}, $\mathcal{R}_\mathcal{P} + \pi_c(\Lambda_\mathcal{P})$ is a periodic tiling and $\mathcal{R}_\mathcal{P}$ is a fundamental domain for the $(d-1)$-dimensional torus $\mathbb{K}_c/\pi_c(\Lambda_\mathcal{P})$. Thus, the natural projection of $E_\mathcal{P}$ on this $(d-1)$-dimensional torus is a translation.

We are interested now in codings of the domain exchange $E_\mathcal{P}$ with respect to the natural partition $\{\mathcal{R}(\und{a}):\und{a}\in\mathcal{P}\}$ of $\mathcal{R}_\mathcal{P}$.
We will investigate the symbolic dynamical systems $\Omega$ which codes the orbits of $(\mathcal{R}_\mathcal{P},E_\mathcal{P})$ and establish connections with the original substitution dynamical system $(X_\sigma,S)$. This will be done in the next section for our one-parameter family of substitutions of Section~\ref{sec:main-ex}.


\section{Modified stepped lines}\label{sec:comb}

Being reducible for a substitution means that we have some linear dependencies between the $\pi(\mathbf{e}_i)$, for $i=1,\ldots,n$ (see Lemma~\ref{lem:redund}). For each $\sigma_t$ of the family of Section~\ref{sec:main-ex}, we have
\begin{equation}\label{eq:ratdep}
 \pi(\mathbf{e}_1) = \pi(\mathbf{e}_3) + \pi(\mathbf{e}_4), \qquad  \pi(\mathbf{e}_5) = \pi(\mathbf{e}_2) + \pi(\mathbf{e}_3).
\end{equation}
We have a {\it stepped line} in $\mathbb{R}^5$ which is the geometrical interpretation of a fixed point $u\in\mathcal{A}^\mathbb{N}$ of $\sigma_t$: 
\[  \overline{u} = \bigcup_{i\geq 1} \{(\mathbf{l}(u_0\cdots u_{i-1}),u_i)\},  \]
where $(\mathbf{x},i)$ denotes the segment from $\mathbf{x}$ to $\mathbf{x}+\mathbf{e}_i$.
Projecting the stepped line into $\mathbb{K}_\beta\cong \mathbb{R}^3$ the rational dependencies show up, and we get what we call a ``reducible'' stepped line, made of five different segments. We can change this stepped line using the rational dependencies, i.e.~we substitute every $\pi(\mathbf{e}_1)$ and $\pi(\mathbf{e}_5)$ with their linearly independent atoms $\pi(\mathbf{e}_2)$, $\pi(\mathbf{e}_3)$ and $\pi(\mathbf{e}_4)$ as in the relations (\ref{eq:ratdep}). Combinatorially this is equivalent to applying the morphism
\begin{equation}\label{chi}
\chi: \; 1\mapsto\, 34, \quad 2\mapsto\, 2, \quad 3\mapsto\, 3, \quad 4\mapsto\, 4, \quad 5\mapsto\, 32, 
\end{equation}
to the fixed point of $\sigma_t$. 
The projection of the stepped lines $\overline{u}$ and $\overline{\chi(u)}$ onto $\mathbb{K}_e\cong\mathbb{R}$ form two tilings (see Figure~\ref{fig:tline}). 

\begin{figure}[h]
\begin{tikzpicture}[scale=1]
\draw[very thick,red]  (0,0)--(1,0) (1+0.32+0.43+0.56+0.75,0)--(1+0.32+0.43+0.56+0.75+1,0) (1+0.32+0.43+0.56+0.75+1,0)--(1+0.32+0.43+0.56+0.75+2,0)
(1+0.32+0.43+0.56+0.75+2+0.32,0)--(1+0.32+0.43+0.56+0.75+3+0.32,0);
\draw[very thick,yellow] (1,0)--(1+0.32,0) (1+0.32+0.43+0.56+0.75+2,0)--(1+0.32+0.43+0.56+0.75+2+0.32,0) (1+0.32+0.43+0.56+0.75+3+0.32,0)--(1+0.32+0.43+0.56+0.75+3+0.64,0);
\draw[very thick,green] (1+0.32,0)--(1+0.32+0.43,0) (1+0.32+0.43+0.56+0.75+3+0.64,0)--(1+0.32+0.43+0.56+0.75+3+0.64+0.43,0);
\draw[very thick,blue] (1+0.32+0.43,0)--(1+0.32+0.43+0.56,0);
\draw[very thick,violet] (1+0.32+0.43+0.56,0)--(1+0.32+0.43+0.56+0.75,0);
\draw (0,-.1)--(0,.1) (1/2,.1)node[above]{$1$} (1,-.1)--(1,.1) (1.16,.1)node[above]{$2$} (1+0.32,-.1)--(1+0.32,.1) (1.52,.1)node[above]{$3$} (1+0.32+0.43,-.1)--(1+0.32+0.43,.1) (2,.1)node[above]{$4$} (1+0.32+0.43+0.56,-.1)--(1+0.32+0.43+0.56,.1)  (2.65,.1)node[above]{$5$} (1+0.32+0.43+0.56+0.75,-.1)--(1+0.32+0.43+0.56+0.75,.1) (3.65,.1)node[above]{$1$} (1+0.32+0.43+0.56+0.75+1,-.1)--(1+0.32+0.43+0.56+0.75+1,.1) (4.6,.1)node[above]{$1$} (1+0.32+0.43+0.56+0.75+2,-.1)--(1+0.32+0.43+0.56+0.75+2,.1) (5.2,.1)node[above]{$2$} (1+0.32+0.43+0.56+0.75+2+0.32,-.1)--(1+0.32+0.43+0.56+0.75+2+0.32,.1) (5.9,.1)node[above]{$1$} (1+0.32+0.43+0.56+0.75+3+0.32,-.1)--(1+0.32+0.43+0.56+0.75+3+0.32,.1) (6.5,.1)node[above]{$2$} (1+0.32+0.43+0.56+0.75+3+0.64,-.1)--(1+0.32+0.43+0.56+0.75+3+0.64,.1) (6.9,.1)node[above]{$3$} (1+0.32+0.43+0.56+0.75+3+0.64+0.43,-.1)--(1+0.32+0.43+0.56+0.75+3+0.64+0.43,.1);
\end{tikzpicture}

\begin{tikzpicture}[scale=1]
\draw[very thick,cyan!50!white] (1,0)--(1+0.32,0) (1+0.32+0.43+0.56+0.75+2,0)--(1+0.32+0.43+0.56+0.75+2+0.32,0) (1+0.32+0.43+0.56+0.75+3+0.32,0)--(1+0.32+0.43+0.56+0.75+3+0.64,0) (1+0.32+0.43+0.56+0.43,0)--(1+0.32+0.43+0.56+0.43+0.32,0);
\draw[very thick,green!60!black] (0,0)--(.43,0) (1+0.32,0)--(1+0.32+0.43,0) (1+0.32+0.43+0.56+0.75,0)--(1+0.32+0.43+0.56+0.75+0.43,0) (1+0.32+0.43+0.56+0.75+3+0.64,0)--(1+0.32+0.43+0.56+0.75+3+0.64+0.43,0) (1+0.32+0.43+0.56+0.75+1,0)--(1+0.32+0.43+0.56+0.75+1.43,0) (1+0.32+0.43+0.56+0.75+2+0.32,0)--(1+0.32+0.43+0.56+0.75+2+0.32+0.43,0) (1+0.32+0.43+0.56,0)--(1+0.32+0.43+0.56+0.43,0);
\draw[very thick,violet] (.43,0)--(1,0) (1+0.32+0.43,0)--(1+0.32+0.43+0.56,0) (1+0.32+0.43+0.56+0.75+0.43,0)--(1+0.32+0.43+0.56+0.75+1,0) (1+0.32+0.43+0.56+0.75+1.43,0)--(1+0.32+0.43+0.56+0.75+2,0) (1+0.32+0.43+0.56+0.75+2+0.32+0.43,0)--(1+0.32+0.43+0.56+0.75+3+0.32,0);
\draw (0,-.1)--(0,.1) (.43,-.1)--(0.43,.1) (.215,.1)node[above]{$3$}  (.7,.1)node[above]{$4$}  (1,-.1)--(1,.1) (1.16,.1)node[above]{$2$} (1+0.32,-.1)--(1+0.32,.1) (1.52,.1)node[above]{$3$} (1+0.32+0.43,-.1)--(1+0.32+0.43,.1) (2,.1)node[above]{$4$} (1+0.32+0.43+0.56,-.1)--(1+0.32+0.43+0.56,.1)  (1+0.32+0.43+0.56+0.22,.1)node[above]{$3$} (1+0.32+0.43+0.56+0.43+0.16,.1)node[above]{$2$} (1+0.32+0.43+0.56+0.75,-.1)--(1+0.32+0.43+0.56+0.75,.1) (1+0.32+0.43+0.56+0.75+0.22,.1)node[above]{$3$} (1+0.32+0.43+0.56+0.75+1+0.22,.1)node[above]{$3$} (1+0.32+0.43+0.56+0.75+0.43+0.28,.1)node[above]{$4$} (1+0.32+0.43+0.56+0.75+1+0.43+0.28,.1)node[above]{$4$} (1+0.32+0.43+0.56+0.75+2+0.22+0.32,.1)node[above]{$3$} (1+0.32+0.43+0.56+0.75+1+0.43+0.28+1+0.32,.1)node[above]{$4$}(1+0.32+0.43+0.56+0.75+1,-.1)--(1+0.32+0.43+0.56+0.75+1,.1) (1+0.32+0.43+0.56+0.75+2,-.1)--(1+0.32+0.43+0.56+0.75+2,.1) (5.2,.1)node[above]{$2$} (1+0.32+0.43+0.56+0.75+2+0.32,-.1)--(1+0.32+0.43+0.56+0.75+2+0.32,.1)  (1+0.32+0.43+0.56+0.75+3+0.32,-.1)--(1+0.32+0.43+0.56+0.75+3+0.32,.1) (6.5,.1)node[above]{$2$} (1+0.32+0.43+0.56+0.75+3+0.64,-.1)--(1+0.32+0.43+0.56+0.75+3+0.64,.1) (6.9,.1)node[above]{$3$} (1+0.32+0.43+0.56+0.75+3+0.64+0.43,-.1)--(1+0.32+0.43+0.56+0.75+3+0.64+0.43,.1) (1+0.32+0.43+0.56+0.75+0.43,-.1)--(1+0.32+0.43+0.56+0.75+0.43,.1) (1+0.32+0.43+0.56+0.75+3+0.64+0.43,-.1)--(1+0.32+0.43+0.56+0.75+3+0.64+0.43,.1) (1+0.32+0.43+0.56+0.75+1.43,-.1)--(1+0.32+0.43+0.56+0.75+1.43,.1) (1+0.32+0.43+0.56+0.43,-.1)--(1+0.32+0.43+0.56+0.43,.1) (1+0.32+0.43+0.56+0.75+2+0.32+0.43,-.1)--(1+0.32+0.43+0.56+0.75+2+0.32+0.43,.1);
\end{tikzpicture}
\caption{Effect of the morphism $\chi$ on the tiling of the line determined by the fixed point of $\sigma_0$.\label{fig:tline}}
\end{figure}
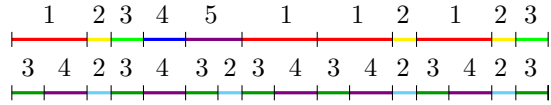

We see now that projecting the vertices of the modified stepped line $\overline{\chi(u)}$ onto $\mathbb{K}_c$ we get the connection with the Rauzy fractal generated by the dual substitution $\mathbf{E}^2(\sigma)$ applied on the geometric element $\mathcal{P}=(\mathbf{0},2\wedge 3)+(\mathbf{0},2\wedge 4)+(\mathbf{0},3\wedge 4)$.

Let $w=\chi(u)$ and consider the sets
\begin{equation}
\widetilde{\mathcal{R}}(a) : = \overline{\{\pi_c(\mathbf{l}(w_0\ldots w_{N-1})): N\in\mathbb{N}, w_{N} = a \}},
\end{equation}
for $a\in\{2,3,4\}$. Let $\widetilde{\mathcal{R}} =\bigcup_{a\in\{2,3,4\}} \widetilde{\mathcal{R}}(a)$.

\subsection{Relations between different definitions of Rauzy fractal}
Our aim is to find relations between our new Rauzy fractals $\mathcal{R}(a\wedge b)$, the classical Rauzy subtiles $\mathcal{R}(a)$ and the $\widetilde{\mathcal{R}}(a)$ obtained modifying the stepped line.

We need a preparatory lemma.

\begin{lemma}\label{le:dec}
We have 
\begin{align*}
\mathcal{R}(2\wedge 3)&=(-\mathcal{R}(1)-\pi_c(\mathbf{e}_1))\cup (-\mathcal{R}(4)-\pi_c(\mathbf{e}_4)), \\ \mathcal{R}(2\wedge 4)&=(-\mathcal{R}(1)-\pi_c(\mathbf{e}_3))\cup (-\mathcal{R}(3)-\pi_c(\mathbf{e}_3))\cup (-\mathcal{R}(5)-\pi_c(\mathbf{e}_3)), \\ \mathcal{R}(3\wedge 4)&=(-\mathcal{R}(2)-\pi_c(\mathbf{e}_2))\cup (-\mathcal{R}(5)-\pi_c(\mathbf{e}_5)). 
\end{align*}
\end{lemma}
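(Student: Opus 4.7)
I plan to establish Lemma~\ref{le:dec} by showing that both sides of each identity are attractors of one and the same graph-directed iterated function system (GIFS) on $\mathbb{K}_c$, then invoke uniqueness of the attractor of a primitive contracting GIFS, which is granted by Proposition~\ref{prop:nice}.

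First, I read off the GIFS satisfied by the family $\{\mathcal{R}(\underline{a}):\underline{a}\in\mathcal{O}_2\}$ by applying Proposition~\ref{prop:seteq} to the explicit description of $\mathbf{E}^2(\sigma_0)$ displayed in Section~\ref{sec:hok}. Seven of the ten images consist of a single face (possibly with a sign flip), so those seven fractals can be substituted out as $M_\sigma^{j}\mathcal{R}(\underline{b})$ with $\underline{b}\in\{2\wedge 3, 2\wedge 4, 2\wedge 5\}$ and small $j$; using $\mathcal{R}(3\wedge 4)=M_\sigma\mathcal{R}(2\wedge 3)$ in particular, the whole system closes into a self-referential GIFS, contracted by powers of $M_\sigma|_{\mathbb{K}_c}$, whose fixed point uniquely determines the triple involved in the lemma.

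Second, I rely on the classical stepped-line equations
\[
\mathcal{R}(1)=M_\sigma\mathcal{R}(1)\cup M_\sigma\mathcal{R}(5),\qquad
\mathcal{R}(2)=M_\sigma\mathcal{R}(1)+\pi_c(\mathbf{e}_1),\qquad
\mathcal{R}(a)=M_\sigma\mathcal{R}(a-1)\ (a\in\{3,4,5\}),
\]
derived directly from $\sigma_0(u)=u$.

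Third, denoting the right-hand sides of the lemma by $D(2\wedge 3)$, $D(2\wedge 4)$, $D(3\wedge 4)$, I substitute these classical equations into each $D(\underline{a})$ and use the commutation $M_\sigma\pi_c=\pi_cM_\sigma$ together with the rational dependencies
\[
\pi_c(\mathbf{e}_1)=\pi_c(\mathbf{e}_3)+\pi_c(\mathbf{e}_4),\qquad \pi_c(\mathbf{e}_5)=\pi_c(\mathbf{e}_2)+\pi_c(\mathbf{e}_3)
\]
of Lemma~\ref{lem:redund}, to rewrite each $D(\underline{a})$ as an $M_\sigma$-image of a union of $D$'s. If this rewriting reproduces the reduced GIFS of the first step, uniqueness forces $D(\underline{a})=\mathcal{R}(\underline{a})$ for $\underline{a}\in\{2\wedge 3, 2\wedge 4, 3\wedge 4\}$.

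The main obstacle is precisely this matching step: under the expansion coming from the classical equations, the external translations $\pi_c(\mathbf{e}_a)$ in each $D(\underline{a})$ combine with the inner shift $\pi_c(\mathbf{e}_1)$ from $\mathcal{R}(2)=M_\sigma\mathcal{R}(1)+\pi_c(\mathbf{e}_1)$ and must collapse, via the rational dependencies, to exactly the translation $\pi_c(M_\sigma^{-1}\mathbf{e}_2)=\pi_c(\mathbf{e}_1-\mathbf{e}_5)$ that appears in the $\mathbf{E}^2(\sigma_0)$-equations for the wedges of type $2\wedge a$. The sign flips produced by the images $\mathbf{E}^2(\sigma_0)(\mathbf{0},1\wedge a)=-(\mathbf{0},(a-1)\wedge 5)$ must also fit, and the choice of negated subtiles $-\mathcal{R}(a)$ on the right-hand sides is dictated precisely by these sign flips. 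Keeping track of all signs and translations simultaneously is tedious but direct once the GIFS viewpoint is adopted.
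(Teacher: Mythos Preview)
Your GIFS-uniqueness approach is correct and genuinely different from the paper's proof. The paper works with Dumont-Thomas numeration: it describes $\mathcal{R}(a\wedge b)$ via infinite walks in the $\mathbf{E}^2(\sigma_0)$-suffix graph and $-\mathcal{R}(a)-\pi_c(\mathbf{e}_a)$ via infinite walks in the ordinary suffix graph, then matches the two sets of labels combinatorially using the digit identity $\pi_e(M_\sigma^3\mathbf{e}_2)=\pi_e(M_\sigma\mathbf{e}_2+\mathbf{e}_2)$. Your route---showing that both sides satisfy the same contracting GIFS---is cleaner conceptually and avoids the ad~hoc string manipulations, at the price of having to track more set equations simultaneously. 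Two small points are worth flagging. First, to close your reduced system on $\{2\wedge 3,2\wedge 4,2\wedge 5\}$ you will need a candidate $D(2\wedge 5)$, which the lemma does not give you; the natural guess $D(2\wedge 5)=(-\mathcal{R}(1)-\pi_c(\mathbf{e}_1))\cup(-\mathcal{R}(4)-\pi_c(\mathbf{e}_4)+\pi_c(\mathbf{e}_2))$ works (and is recorded in the Remark after the lemma). Second, your remark that the minus signs in $-\mathcal{R}(a)$ are ``dictated by'' the orientation flips $\mathbf{E}^2(\sigma_0)(\mathbf{0},1\wedge a)=-(\mathbf{0},(a{-}1)\wedge 5)$ is a misreading: those orientation signs are invisible in the set equations of Proposition~\ref{prop:seteq}, which pass to supports. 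The set negation in $-\mathcal{R}(a)-\pi_c(\mathbf{e}_a)$ comes instead from the suffix (rather than prefix) description of the classical subtiles; cf.~equation~\eqref{eq:num2}. This misconception does not affect the validity of your argument, but you should correct the explanation.
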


\begin{proof}
Since the computations are rather lengthy and technical, we refer the interested reader to Appendix~\ref{app:B}.
\end{proof}

\begin{remark}
We can carry on the computations for the other $\mathcal{R}(a\wedge b)$ in a similar way as above. 
We obtain
\begin{align*}
\mathcal{R}(2\wedge 5) &= (-\mathcal{R}(1)-\pi_c(\mathbf{e}_1)) \cup (-\mathcal{R}(4)-\pi_c(\mathbf{e}_4)+\pi_c(\mathbf{e}_2)), \\
\mathcal{R}(3\wedge 5) &= (-\mathcal{R}(5)-\pi_c(\mathbf{e}_5)) \cup (-\mathcal{R}(1)-\pi_c(\mathbf{e}_4)),
\end{align*}
and for the others just use the set equation $\mathcal{R}(a\wedge b)=M_\sigma \mathcal{R}(a'\wedge b')$ for $a\wedge b \leftarrow a'\wedge b'$.

Similar formulas to express the Rauzy fractals in terms of the subtiles $\mathcal{R}(a)$ hold for the entire family of substitutions $\sigma_t$.
\end{remark}

\begin{proposition}\label{pro:iso}
We have $\widetilde{\mathcal{R}} = -E_\mathcal{P}(\mathcal{R}_\mathcal{P})$, for $\mathcal{P}=(\mathbf{0},2\wedge 3)+(\mathbf{0},2\wedge 4)+(\mathbf{0},3\wedge 4)$.
\end{proposition}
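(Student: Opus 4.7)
The plan is to compute both sides of the desired identity in terms of the classical subtiles $\mathcal{R}(a)$, $a\in\{1,\dots,5\}$, and check that the two expressions agree. The rational dependencies
\[
\pi_c(\mathbf{e}_1)=\pi_c(\mathbf{e}_3)+\pi_c(\mathbf{e}_4),\qquad
\pi_c(\mathbf{e}_5)=\pi_c(\mathbf{e}_2)+\pi_c(\mathbf{e}_3),
\]
coming from \eqref{eq:ratdep}, will be the bridge that makes both sides match.

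First I would unwind the right-hand side. By the definition of $E_\mathcal{P}$, on each subtile of $\mathcal{R}_\mathcal{P}$ one adds the $\pi_c$-image of the unique missing letter in $\{2,3,4\}$; explicitly
\[
E_\mathcal{P}(\mathcal{R}_\mathcal{P})=\bigl(\mathcal{R}(2\wedge3)+\pi_c(\mathbf{e}_4)\bigr)\cup\bigl(\mathcal{R}(2\wedge4)+\pi_c(\mathbf{e}_3)\bigr)\cup\bigl(\mathcal{R}(3\wedge4)+\pi_c(\mathbf{e}_2)\bigr).
\]
Substituting the decompositions from Lemma~\ref{le:dec} and simplifying with the two rational dependencies above (so that $-\pi_c(\mathbf{e}_1)+\pi_c(\mathbf{e}_4)=-\pi_c(\mathbf{e}_3)$ and $-\pi_c(\mathbf{e}_5)+\pi_c(\mathbf{e}_2)=-\pi_c(\mathbf{e}_3)$), and finally negating everything, I should obtain
\[
-E_\mathcal{P}(\mathcal{R}_\mathcal{P})
=\mathcal{R}(1)\cup\mathcal{R}(2)\cup\mathcal{R}(3)\cup\mathcal{R}(4)\cup\mathcal{R}(5)\cup\bigl(\mathcal{R}(1)+\pi_c(\mathbf{e}_3)\bigr)\cup\bigl(\mathcal{R}(5)+\pi_c(\mathbf{e}_3)\bigr).
\]

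Next I would unwind the left-hand side by a direct combinatorial analysis of the modified stepped line $\overline{w}$ with $w=\chi(u)$. The morphism $\chi$ is chosen precisely so that $\pi_c\circ\mathbf{l}\circ\chi=\pi_c\circ\mathbf{l}$: indeed $\pi_c(\mathbf{l}(\chi(1)))=\pi_c(\mathbf{e}_3+\mathbf{e}_4)=\pi_c(\mathbf{e}_1)$ by the first dependency and $\pi_c(\mathbf{l}(\chi(5)))=\pi_c(\mathbf{e}_3+\mathbf{e}_2)=\pi_c(\mathbf{e}_5)$ by the second, while $\chi$ fixes $2,3,4$. Consequently, for every prefix $u_0\cdots u_{k-1}$ one has $\pi_c(\mathbf{l}(\chi(u_0\cdots u_{k-1})))=\pi_c(\mathbf{l}(u_0\cdots u_{k-1}))$. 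Walking through the block $\chi(u_k)$ letter by letter and recording which letter of $w$ is hit, I find exactly the following contributions:
\begin{itemize}
\item the letter $2$ in $w$ is seen either at positions where $u_k=2$ (contribution $\mathcal{R}(2)$) or as the second letter of a block $\chi(5)=32$ (contribution $\mathcal{R}(5)+\pi_c(\mathbf{e}_3)$);
\item the letter $3$ in $w$ is seen at positions where $u_k\in\{3,1,5\}$, giving $\mathcal{R}(3)\cup\mathcal{R}(1)\cup\mathcal{R}(5)$;
\item the letter $4$ in $w$ is seen either when $u_k=4$ (contribution $\mathcal{R}(4)$) or as the second letter of $\chi(1)=34$ (contribution $\mathcal{R}(1)+\pi_c(\mathbf{e}_3)$).
\end{itemize}
Taking closures and the union over $a\in\{2,3,4\}$ yields exactly the same seven pieces as in the computation of $-E_\mathcal{P}(\mathcal{R}_\mathcal{P})$, proving $\widetilde{\mathcal{R}}=-E_\mathcal{P}(\mathcal{R}_\mathcal{P})$.

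The main bookkeeping obstacle is the careful case analysis inside each block of $\chi$ together with the consistent use of the two $\pi_c$-dependencies to absorb the translations $-\pi_c(\mathbf{e}_1)+\pi_c(\mathbf{e}_4)$ and $-\pi_c(\mathbf{e}_5)+\pi_c(\mathbf{e}_2)$ into $-\pi_c(\mathbf{e}_3)$; without these equalities the two sides would not even have coincident translation vectors. Everything else is a direct application of Lemma~\ref{le:dec} and the definitions of $E_\mathcal{P}$ and $\widetilde{\mathcal{R}}(a)$.
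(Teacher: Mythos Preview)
Your proof is correct and follows essentially the same route as the paper: both compute $\widetilde{\mathcal{R}}(a)$ in terms of the classical subtiles $\mathcal{R}(a)$ via the morphism $\chi$, invoke Lemma~\ref{le:dec} to decompose each $\mathcal{R}(b\wedge c)$, and match the two sides using the rational dependencies~\eqref{eq:ratdep}. The paper phrases the conclusion slightly more sharply as a piece-by-piece identity $\widetilde{\mathcal{R}}(a)+\pi_c(\mathbf{e}_a)=-\mathcal{R}(b\wedge c)$ for $\{a,b,c\}=\{2,3,4\}$ (and notes the extension to all $\sigma_t$), but this is exactly what your block-by-block computation yields.
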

\begin{proof}
Using the definition \eqref{eq:bro} of the subtiles $\mathcal{R}(a)$ as projections of colored vertices of the stepped line and considering the subtiles $\widetilde{\mathcal{R}}(a)$, we see from the morphism $\chi$ that 
\begin{gather*} 
\mathcal{R}(1)\cup\mathcal{R}(3)\cup\mathcal{R}(5) = \widetilde{\mathcal{R}}(3),\quad (\mathcal{R}(1)+\pi_c(\mathbf{e}_3))\cup \mathcal{R}(4) = \widetilde{\mathcal{R}}(4), \\ 
\mathcal{R}(2)\cup(\mathcal{R}(5)+\pi_c(\mathbf{e}_3)) = \widetilde{\mathcal{R}}(2), 
\end{gather*}
hold for $\sigma_0$.
Therefore, using Lemma \ref{le:dec} and the relations \eqref{eq:ratdep}, we get that for each $a\in\{2,3,4\}$
\[  
\widetilde{\mathcal{R}}(a) + \pi_c(\mathbf{e}_a) = - \mathcal{R}(b\wedge c),\quad \text{ for } \{b,c\}\in\{2,3,4\}\setminus\{a\}.  
\]
A similar argument works for any $\sigma_t$ after having adapted the decomposition of Lemma~\ref{le:dec}.
\end{proof}

\subsection{First return}
Since the strong coincidence condition holds, we get that the subtiles $\widetilde{\mathcal{R}}(a)$, for $a\in\{2,3,4\}$, are pairwise measure disjoint. Therefore we can define the domain exchange 
\[ \widetilde{E}: \mathbf{x} \mapsto \mathbf{x} - \pi_c(\mathbf{e}_a), \quad 
\text{if }\mathbf{x}\in\widetilde{\mathcal{R}}(a). \]
By Proposition~\ref{pro:iso} we have $(\widetilde{\mathcal{R}},\widetilde{E}) = (-\mathcal{R}_\mathcal{P}, E^{-1}_\mathcal{P})$.

\medskip

Now we see that $(\widetilde{\mathcal{R}},\widetilde{E})$ can be related to $(\mathcal{R},E)$, where $\mathcal{R}$ is the classical Rauzy fractal defined in \eqref{eq:bro}.
Combinatorially the morphism $\chi$ describes the first return of $\widetilde{E}$ on $\mathcal{R}$. 

\begin{proposition}\label{pro:return}
$E$ is the first return\index{first return} of $\widetilde{E}$ on $\mathcal{R}$.  
\end{proposition}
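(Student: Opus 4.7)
The plan is to realize $E$ and $\widetilde{E}$ as symbolic codings of a common orbit and to use the bridging morphism $\chi$ to compare them. Fix a fixed point $u$ of $\sigma_t$, set $w=\chi(u)$, and let $\mathbf{x}_k:=\pi_c(\mathbf{l}(u_0\cdots u_{k-1}))$ and $\mathbf{y}_N:=\pi_c(\mathbf{l}(w_0\cdots w_{N-1}))$. The rational dependencies \eqref{eq:ratdep} give $\pi_c(\mathbf{l}(\chi(a)))=\pi_c(\mathbf{e}_a)$ for each $a\in\mathcal{A}$, whence $\mathbf{x}_k=\mathbf{y}_{N_k}$ with $N_k:=\sum_{i<k}|\chi(u_i)|$. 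By definition $\mathbf{x}_k\in\mathcal{R}(u_k)\cap\widetilde{\mathcal{R}}(w_{N_k})$, and $\{\mathbf{x}_k:k\geq 0\}$ is dense in $\mathcal{R}$ and sits inside $\widetilde{\mathcal{R}}$.

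The first step is to check that exactly $|\chi(u_k)|$ applications of $\widetilde{E}$ carry $\mathbf{x}_k$ to $\mathbf{x}_{k+1}=E(\mathbf{x}_k)$. Traversing the block $\chi(u_k)$ in $w$ one letter at a time produces the total displacement $\pi_c(\mathbf{l}(\chi(u_k)))=\pi_c(\mathbf{e}_{u_k})$, which matches the jump performed by $E$. Thus $\widetilde{E}^{|\chi(u_k)|}(\mathbf{x}_k)=\mathbf{y}_{N_{k+1}}=\mathbf{x}_{k+1}=E(\mathbf{x}_k)$ is immediate from the very definitions of the two domain exchanges.

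The delicate step, which I expect to be the main obstacle, is to verify that no intermediate iterate $\widetilde{E}^j(\mathbf{x}_k)$ with $0<j<|\chi(u_k)|$ lies in $\mathcal{R}$, so that the first-return time is exactly $|\chi(u_k)|$. Since $|\chi(a)|=1$ for $a\in\{2,3,4\}$, only $u_k\in\{1,5\}$ need be treated, and in both cases the intermediate point is of the form $\mathbf{x}_k+\pi_c(\mathbf{e}_3)$, belonging to a translate $\mathcal{R}(u_k)+\pi_c(\mathbf{e}_3)$. From the decompositions derived in the proof of Proposition \ref{pro:iso}, these translates are precisely the extra components of $\widetilde{\mathcal{R}}(4)=(\mathcal{R}(1)+\pi_c(\mathbf{e}_3))\cup\mathcal{R}(4)$ and $\widetilde{\mathcal{R}}(2)=\mathcal{R}(2)\cup(\mathcal{R}(5)+\pi_c(\mathbf{e}_3))$. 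Their measure-disjointness from $\mathcal{R}$ follows from the strong coincidence condition of Proposition \ref{prop:scc} (ensuring that the two components making up each $\widetilde{\mathcal{R}}(a)$ overlap only on a null set), combined with the self-replicating tiling property of Theorem \ref{thm:selfreptil} applied to a stepped surface containing both $(\mathbf{0},\underline{a}^*)$ for $\underline{a}\in\mathcal{P}$ and the relevant translated faces; without such a global tiling argument, comparing the fractal sets $\mathcal{R}(a)$ and $\mathcal{R}(a')+\pi_c(\mathbf{e}_3)$ directly would be impracticable.

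Having established agreement on the dense orbit $\{\mathbf{x}_k\}$, both $E$ and the first-return map of $\widetilde{E}$ restrict to translations on each atom of a measurable partition of $\mathcal{R}$ (namely the subtiles $\mathcal{R}(a)$, further intersected with the regions of constant return time). Two piecewise translations that coincide on a dense subset of each piece coincide on that piece up to a Lebesgue-null set, so $E$ equals the first-return map of $\widetilde{E}$ on $\mathcal{R}$ almost everywhere, as required. The argument is uniform in $t$, once \eqref{eq:ratdep} and the definition of $\chi$ in \eqref{chi} are recognised to hold for every member of the family \eqref{fam}.
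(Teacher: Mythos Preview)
Your approach is sound in outline but differs from the paper's, and your justification at the crucial step~3 misattributes what the cited results provide.

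The paper argues directly at the level of subtiles rather than orbit points. From the decompositions recorded in the proof of Proposition~\ref{pro:iso}, namely $\widetilde{\mathcal{R}}(3)=\mathcal{R}(1)\cup\mathcal{R}(3)\cup\mathcal{R}(5)$, $\widetilde{\mathcal{R}}(4)=(\mathcal{R}(1)+\pi_c(\mathbf{e}_3))\cup\mathcal{R}(4)$ and $\widetilde{\mathcal{R}}(2)=\mathcal{R}(2)\cup(\mathcal{R}(5)+\pi_c(\mathbf{e}_3))$, one simply tracks each $\mathcal{R}(a)$ under $\widetilde{E}$: for $a\in\{2,3,4\}$ a single application already gives the translation by $\pi_c(\mathbf{e}_a)$; for $a\in\{1,5\}$ the first application lands in the ``extra'' piece of $\widetilde{\mathcal{R}}(4)$, respectively $\widetilde{\mathcal{R}}(2)$, and a second application yields $\pi_c(\mathbf{e}_3+\mathbf{e}_4)=\pi_c(\mathbf{e}_1)$, respectively $\pi_c(\mathbf{e}_3+\mathbf{e}_2)=\pi_c(\mathbf{e}_5)$. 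No orbit-point density argument is needed. Your route via the morphism $\chi$ and the stepped-line vertices is a legitimate alternative and makes the symbolic interpretation transparent, at the cost of the extra density-extension step.

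The gap is in your step~3. Proposition~\ref{prop:scc} together with Proposition~\ref{SCC} yields measure-disjointness of the wedge-indexed subtiles $\mathcal{R}(\und{a})$, hence of $\widetilde{\mathcal{R}}(2),\widetilde{\mathcal{R}}(3),\widetilde{\mathcal{R}}(4)$ from one another; it says nothing about the two components \emph{within} a single $\widetilde{\mathcal{R}}(a)$, which is precisely what you need (e.g.\ that $\mathcal{R}(1)+\pi_c(\mathbf{e}_3)$ and $\mathcal{R}(4)$ are measure-disjoint inside $\widetilde{\mathcal{R}}(4)$). Similarly, Theorem~\ref{thm:selfreptil} concerns the wedge-indexed tiles on $\Gamma_\mathcal{U}$, not the classical $\mathcal{R}(a)$ and their $\mathbf{e}_3$-translates, so the appeal to a ``stepped surface containing the relevant translated faces'' is not substantiated. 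The disjointness you want does hold, but it comes from Lemma~\ref{le:dec}: those unions arise via the suffix-graph/Dumont--Thomas description and hence from the measure-disjoint set equations of Proposition~\ref{prop:seteq}. Alternatively, a Rokhlin-tower measure count---$\sum_{a\in\mathcal{A}}|\chi(a)|\,\lambda(\mathcal{R}(a))=\lambda(\widetilde{\mathcal{R}})$ together with the fact that $\widetilde{E}$ preserves Lebesgue measure---forces the intermediate pieces to be essentially disjoint from $\mathcal{R}$. Either fix completes your argument.
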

\begin{proof}
From the decomposition of Lemma \ref{le:dec} and Proposition \ref{pro:iso} we see that $\widetilde{E}(\widetilde{\mathcal{R}}(4))=\widetilde{\mathcal{R}}(4)+\pi_c(\mathbf{e}_4) = (\mathcal{R}(1)+\pi_c(\mathbf{e}_3+\mathbf{e}_4))\cup (\mathcal{R}(4)+\pi_c(\mathbf{e}_4))$ and $\widetilde{E}(\widetilde{\mathcal{R}}(2))=\widetilde{\mathcal{R}}(2)+\pi_c(\mathbf{e}_2) = (\mathcal{R}(2)+\pi_c(\mathbf{e}_2))\cup (\mathcal{R}(5)+\pi_c(\mathbf{e}_3+\mathbf{e}_2))$. Furthermore $\mathcal{R}(1)$, $\mathcal{R}(3)$ and $\mathcal{R}(5)$ are in $\widetilde{\mathcal{R}}(3)$ and applying $\widetilde{E}$ we translate them by $\pi_c(\mathbf{e}_3)$. Using the relations $\pi_c(\mathbf{e}_3)+\pi_c(\mathbf{e}_4)=\pi_c(\mathbf{e}_1)$ and $\pi_c(\mathbf{e}_2)+\pi_c(\mathbf{e}_3)=\pi_c(\mathbf{e}_5)$ we get $\widetilde{E}|_\mathcal{R} = E$.
\end{proof}

\begin{remark}
Note that there are other three morphisms that can be obtained by flipping $\chi(1)$ or $\chi(5)$. The effect of these morphisms on the fixed point of $\sigma_0$ change the shapes of the subtiles $\widetilde{\mathcal{R}}(a)$ and the definition of the domain exchange $\widetilde{E}$ but the results of Proposition~\ref{pro:iso} and \ref{pro:return} remain true.
\end{remark}

\section{Symbolic dynamics}\label{sec:symbdyn}

As in Section~\ref{sec:comb}, we let $\sigma$ be a substitution of the family of Section~\ref{sec:main-ex}.

Given a fixed point $u$ of $\sigma$, let $\Omega=\overline{\{ S^k \chi(u) : k\in\mathbb{N} \}}$, where $\chi$ is the morphism defined by \eqref{chi}.

\begin{lemma}
$(\Omega,S)$ is minimal and uniquely ergodic.
\end{lemma}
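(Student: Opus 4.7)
The plan is to derive minimality and unique ergodicity of $(\Omega, S)$ from the corresponding properties of $(X_\sigma, S)$ via the morphism $\chi$. Since $\sigma=\sigma_t$ is primitive (a direct inspection of $M_{\sigma_t}$ shows some power of the matrix is strictly positive), $(X_\sigma, S)$ is minimal and uniquely ergodic by the classical theorem recalled in Section~\ref{prel}. The crucial feature of $\chi$ that makes the transfer possible is that it is non-erasing with uniformly bounded image length, namely $|\chi(a)|\leq 2$ for every $a\in\mathcal{A}$.

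For minimality, I will show that $\chi(u)$ is uniformly recurrent, which is equivalent to minimality of its orbit closure. Every factor $w$ of $\chi(u)$ of length $\ell$ arises as a contiguous substring of $\chi(v)$ for some factor $v$ of $u$ of length at most $\ell+2$, after possibly truncating one $\chi$-block at each end. Since $\sigma$ is primitive, $u$ is uniformly recurrent, so $v$ occurs in $u$ with gaps uniformly bounded by some $R=R(v)$; consequently $w$ occurs in $\chi(u)$ with gaps bounded by $2R$. Uniform recurrence of $\chi(u)$ passes to every element of the orbit closure $\Omega$ (as a shift-invariant property that is preserved under taking limits with a uniform gap bound), which is equivalent to minimality of $(\Omega, S)$.

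For unique ergodicity, I invoke the standard criterion that a minimal subshift is uniquely ergodic iff every factor admits a well-defined frequency in each of its elements. For a factor $w$ of $\chi(u)$, the number of occurrences of $w$ in the prefix $\chi(u)_{[0,N)}$ can be expressed as a finite linear combination of occurrence counts in $u_{[0,K_N)}$ of the finitely many factors $v$ of $u$ of length at most $|w|+2$ that produce $w$ in $\chi(v)$ at a specified offset, where $K_N$ is defined by $|\chi(u_0\cdots u_{K_N-1})|\leq N<|\chi(u_0\cdots u_{K_N})|$. Since $(X_\sigma, S)$ is uniquely ergodic, each such factor has a well-defined frequency in $u$ and the letter frequencies $f_a$ exist, so $K_N/N\to 1/L$ with $L=\sum_{a\in\mathcal{A}}f_a\,|\chi(a)|>0$. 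Combining these limits, the frequency of $w$ in $\chi(u)$ exists and is independent of the starting point, and uniformity transfers to every element of $\Omega$ by minimality. This gives unique ergodicity.

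The main technical obstacle is the bookkeeping at the boundaries of $\chi$-blocks, since partial blocks at the ends of prefixes introduce an $O(1)$ error in each factor count; this is absorbed in the normalization by $N\to\infty$ thanks to the uniform bound $|\chi(a)|\leq 2$. A secondary subtlety is that $\chi$ does not commute with $\sigma$ (one checks that no substitution $\tau$ on $\{2,3,4\}$ satisfies $\chi\sigma=\tau\chi$), so one cannot realize $\Omega$ directly as a classical substitutive subshift; the direct frequency argument sidesteps this issue by working with $(X_\sigma,S)$ rather than attempting to desubstitute $\chi(u)$.
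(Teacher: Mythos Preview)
Your minimality argument coincides with the paper's: both deduce uniform recurrence of $\chi(u)$ from that of $u$ via the bounded length of $\chi$-images.

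For unique ergodicity the routes diverge. The paper observes that the cone $M_\chi\bigcap_{n\ge 0} M_\sigma^n\mathbb{R}_+^5$ is one-dimensional and then invokes a general theorem of Berth\'e--Delecroix on uniform word frequencies for morphic (or $S$-adic) sequences, together with the equivalence between uniform frequencies and unique ergodicity from Queff\'elec. Your argument is instead a self-contained unpacking of the same mechanism: you translate factor counts in $\chi(u)_{[0,N)}$ into factor counts in $u_{[0,K_N)}$ and use the unique ergodicity of $(X_\sigma,S)$ directly. One point deserves care: the clause ``uniformity transfers to every element of $\Omega$ by minimality'' is correct only because unique ergodicity of $(X_\sigma,S)$ gives \emph{uniform} convergence of Birkhoff averages along $u$, which your counting formula (with its $O(1)$ boundary terms and bounded block lengths) then carries to uniform convergence along $\chi(u)$, and hence to every point of the orbit closure. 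Minimality together with mere pointwise existence of frequencies at the single point $\chi(u)$ would not suffice, since minimal non-uniquely-ergodic subshifts do exist and contain generic points for each of their ergodic measures. Your approach buys self-containment and avoids external citations; the paper's is terser and situates the result within the general $S$-adic framework.
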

\begin{proof}
We know that $(X_\sigma,S)$, where $X_\sigma=\overline{ \{ S^k u: k \in \mathbb{N}\} }$, is minimal. 
This means that every factor of $u$ occurs in $u$ with bounded gaps, and the same happens for $\chi(u)$. Thus $(\Omega,S)$ is minimal.

By primitivity of $\sigma$ the system $(X_\sigma,S,\mu)$ is uniquely ergodic, so the cone
\[ \bigcap_{n\geq 0} M^n_\sigma \mathbb{R}^5_+ = \mathbb{R}_+\mathbf{u}_\beta  \]
is one-dimensional and is parametrized by the right eigenvector $\mathbf{u}_\beta$ (rescaled such that $\|\mathbf{u}_\beta\|_1 = 1$) which coincides with the vector of letter frequencies $(\mu([1]),\ldots,\mu([5]))$. For the system $(\Omega,S)$, with $\Omega = \overline{ \{ S^k \chi(u): k \in \mathbb{N}\} }$, the cone
\[ M_\chi \bigcap_{n\geq 0} M^n_\sigma \mathbb{R}^5_+  \] is also one-dimensional, with $M_\chi$ incidence matrix of $\chi$. Hence $\Omega$ has uniform factor frequencies by~\cite[Theorem~5.7]{Berthe-Delecroix}, which is equivalent to unique ergodicity by \cite[Corollary~4.2]{Queffelec:10}.
\end{proof}
We want to show that the dynamical system $(\Omega,S,m)$, where $m$ is the unique $S$-invariant Borel probability measure on $\Omega$, is measurably conjugate to $(\widetilde{\mathcal{R}},\widetilde{E},\lambda)$.

Let $w=\chi(u)$ and consider the sets
\begin{equation}\label{eq:brored}
\widetilde{\mathcal{R}}(a_{[0,\ell)}) : = \overline{\{\pi_c(\mathbf{l}(w_{[0,N)}): N\in\mathbb{N}, w_{[N,N+\ell)} = a_{[0,\ell)} \}},
\end{equation}
where the notation $a_{[0,\ell)}$ stands for the word $a_0\cdots a_{\ell-1} \in\{2,3,4\}^*$, and similarly for $w_{[0,N)}$. Let $\widetilde{\mathcal{R}} =\bigcup_{a\in\{2,3,4\}} \widetilde{\mathcal{R}}(a)$.

We define the \emph{representation map}
\begin{equation}\label{eq:phi}
\phi : \Omega \to \widetilde{\mathcal{R}},\quad (a_i)_{i\in\mathbb{N}} \mapsto \bigcap_{\ell\in\mathbb{N}} \widetilde{\mathcal{R}}(a_{[0,\ell)}).
\end{equation}

\begin{lemma}
$\phi$ is well-defined, continuous and surjective.
\end{lemma}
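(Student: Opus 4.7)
The plan is to establish the three claims---well-definedness, continuity, and surjectivity---by exploiting the self-similar structure of the Rauzy subtiles $\mathcal{R}(\und{b})$ via Proposition~\ref{pro:iso}. The crux of the argument is the shrinking of the diameters of the cylinders $\widetilde{\mathcal{R}}(a_{[0,\ell)})$ to zero as $\ell\to\infty$.

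First, the cylinders form a nested family of non-empty compact subsets of $\widetilde{\mathcal{R}}$: any index $N$ witnessing $w_{[N,N+\ell+1)}=a_{[0,\ell+1)}$ also witnesses $w_{[N,N+\ell)}=a_{[0,\ell)}$, whence $\widetilde{\mathcal{R}}(a_{[0,\ell+1)})\subseteq \widetilde{\mathcal{R}}(a_{[0,\ell)})$. In particular, $\bigcap_{\ell}\widetilde{\mathcal{R}}(a_{[0,\ell)})\ne\emptyset$ for every $(a_i)\in\Omega$.

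The main obstacle is the shrinking of diameters. By Proposition~\ref{pro:iso}, each $\widetilde{\mathcal{R}}(a)$ equals, up to sign and translation by $\pi_c(\mathbf{e}_a)$, a Rauzy subtile $\mathcal{R}(\und{b})$ with $\und{b}\in\mathcal{P}$. The set equations~\eqref{eq:seteq} decompose $\mathcal{R}(\und{b})$ into $M_\sigma$-contracted copies indexed by the faces in $\mathbf{E}^{d-1}(\sigma)(\mathbf{0},\und{b})$; iterating $k$ times yields a decomposition into pieces of the form $M_\sigma^k(\mathcal{R}(\und{b}')+\pi_c(\mathbf{y}))$, each of diameter at most $C\lambda_c^k$, where $\lambda_c=\max_{i\geq 2}|\beta^{(i)}|<1$. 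On the other hand, since $w=\chi(u)$ and $1\leq |\chi(a)|\leq 2$, an occurrence of $a_{[0,\ell)}$ in $w$ at position $N$ corresponds to a window of length $L\geq \ell/2$ in $u$, and by primitivity such a window contains a block of the form $\sigma^k(c)$ for some letter $c$ and some $k$ of order $\log_\beta\ell$. Tracking this hierarchical information through the coding shows that $\widetilde{\mathcal{R}}(a_{[0,\ell)})$ sits inside a single piece $M_\sigma^k(\mathcal{R}(\und{b}')+\pi_c(\mathbf{y}))$ (up to the translation $-\pi_c(\mathbf{e}_{a_0})$ coming from Proposition~\ref{pro:iso}), whence $\mathrm{diam}(\widetilde{\mathcal{R}}(a_{[0,\ell)}))\leq C\lambda_c^k\to 0$.

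With the diameters shrinking to zero, well-definedness of $\phi$ is immediate. Continuity follows as well: if $(a^{(n)}_i)_n\to (a_i)$ in the product topology on $\Omega$, the two sequences share a prefix of length $\ell_n\to\infty$, so $\phi((a^{(n)}_i))$ and $\phi((a_i))$ both lie in $\widetilde{\mathcal{R}}(a_{[0,\ell_n)})$, whose diameter vanishes. For surjectivity, $\phi$ is continuous and $\Omega$ is compact, so $\phi(\Omega)$ is closed. Unravelling the definitions, $\phi(S^N w)=\pi_c(\mathbf{l}(w_{[0,N)}))$ for every $N\in\mathbb{N}$, hence $\phi(\Omega)$ contains $\{\pi_c(\mathbf{l}(w_{[0,N)})):N\in\mathbb{N}\}$, which is dense in $\widetilde{\mathcal{R}}$ by definition~\eqref{eq:brored} (taking $\ell=1$). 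Therefore $\phi(\Omega)=\widetilde{\mathcal{R}}$.
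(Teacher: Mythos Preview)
Your surjectivity argument is correct and in fact cleaner than the paper's Cantor diagonal argument: computing $\phi(S^N w)=\pi_c(\mathbf{l}(w_{[0,N)}))$ and using compactness of $\Omega$ together with density of these points in $\widetilde{\mathcal{R}}$ is a nice shortcut.

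The diameter-shrinking step, however, has a genuine gap. You claim that ``tracking this hierarchical information through the coding shows that $\widetilde{\mathcal{R}}(a_{[0,\ell)})$ sits inside a single piece $M_\sigma^k(\mathcal{R}(\und{b}')+\pi_c(\mathbf{y}))$''. This is not justified, and there are two obstructions. First, the cylinder $\widetilde{\mathcal{R}}(a_{[0,\ell)})$ is the closure of the set of \emph{all} projections $\pi_c(\mathbf{l}(w_{[0,N)}))$ over \emph{all} positions $N$ at which $a_{[0,\ell)}$ occurs in $w$. Different occurrences sit at different places in the desubstitution hierarchy of $u$, so even if each single occurrence landed in some level-$k$ piece of the set-equation decomposition, there is no reason they should all land in the \emph{same} piece; your argument bounds the diameter of one piece, not of their union. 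Second, and more fundamentally, the link you invoke between the $\sigma$-hierarchy on $u$ and the $\mathbf{E}^{d-1}(\sigma)$-set-equations for $\mathcal{R}(\und{b})$ is not established anywhere: Proposition~\ref{pro:iso} only identifies $\widetilde{\mathcal{R}}(a)$ with a translate of $-\mathcal{R}(\und{b})$ at the level of single letters, and $w=\chi(u)$ is not itself a fixed point of $\sigma$ (nor of any substitution introduced before this lemma), so there is no direct ``hierarchical information'' on $w$ matching the GIFS structure of the $\mathcal{R}(\und{b})$.

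The paper avoids this entirely. It first uses uniform recurrence of $w$ to reduce the shrinking of $\widetilde{\mathcal{R}}(a_{[0,\ell)})$ for an arbitrary $(a_i)\in\Omega$ to the shrinking of $\widetilde{\mathcal{R}}(w_{[0,k)})$ for the specific prefixes of $w$. Then it observes that $\widetilde{\mathcal{R}}(w_{[0,k)})+\mathcal{S}_k\subset\widetilde{\mathcal{R}}$, where $\mathcal{S}_k=\{\pi_c(\mathbf{l}(w_{[0,j)})):0\le j\le k\}$ converges to $\widetilde{\mathcal{R}}$ in Hausdorff metric; since $\widetilde{\mathcal{R}}$ is bounded, this forces $\widetilde{\mathcal{R}}(w_{[0,k)})\to\{\mathbf{0}\}$. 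This argument uses no self-similarity at all, only boundedness of $\widetilde{\mathcal{R}}$ and the definition of the cylinders, which is why it goes through without the missing link you would need.
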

\begin{proof}
Let $(a_i)_{i\in\mathbb{N}}\in\Omega$. Then $\widetilde{\mathcal{R}} = \widetilde{\mathcal{R}}(a_{[0,0)}) \supset \widetilde{\mathcal{R}}(a_{[0,1)}) \supset\cdots$, and $\widetilde{\mathcal{R}}(a_{[0,\ell)})\neq\emptyset$ for all $\ell\in\mathbb{N}$. The word $w=\chi(u)$ is uniformly recurrent, since $u$ is generated by a primitive substitution and $\chi$ does not affect the uniformly recurrence. Thus we have a sequence $(\ell_k)_{k\in\mathbb{N}}$ such that $a_{[\ell_k,\ell_k+k)} = w_{[0,k)}$, for all $k\in\mathbb{N}$. Since $\widetilde{\mathcal{R}}(a_{[0,\ell_k+k)}) \subset \widetilde{\mathcal{R}}(a_{[\ell_k,\ell_k+k)}) - \pi_c(\mathbf{l}(a_{[0,\ell_k)}))$, we need to show that the diameter of $\widetilde{\mathcal{R}}(a_{[\ell_k,\ell_k+k)}) = \widetilde{\mathcal{R}}(w_{[0,k)})$ converges to zero. Let $\mathcal{S}_k = \{ \pi_c(\mathbf{l}(w_{[0,j)}): 0\le j \le k \}$. Then $\widetilde{\mathcal{R}}(w_{[0,k)}) + \mathcal{S}_k \subset \widetilde{\mathcal{R}}$ for all $k\in \mathbb{N}$, and $\lim_{k\to\infty} \mathcal{S}_k = \widetilde{\mathcal{R}}$ with respect to the Hausdorff metric. But this implies that $\lim_{k\to\infty}\widetilde{\mathcal{R}}(w_{[0,k)}) = \{\mathbf{0}\}$, which proves that $\phi$ is well-defined.

The map $\phi$ is continuous since the sequence $(\widetilde{\mathcal{R}}(a_{[0,\ell)}))_{\ell\in\mathbb{N}}$ is nested and converges to a single point. The surjectivity follows from a Cantor diagonal argument.
\end{proof}

\begin{lemma}\label{pro:conj}
$(\Omega,S,m)$ is measurably conjugate to $(\widetilde{\mathcal{R}},\widetilde{E},\lambda)$ via $\phi$. 
\end{lemma}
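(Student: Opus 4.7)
The plan is to verify that $\phi$ is a measurable conjugacy by establishing three facts: (i)~$\phi$ intertwines the two dynamics; (ii)~$\phi_\ast m = \lambda/\lambda(\widetilde{\mathcal{R}})$; and (iii)~$\phi$ is $m$-almost everywhere injective. The previous lemma already provides continuity and surjectivity of $\phi$, so combining (i)--(iii) yields the claim.

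For (i) I would argue directly from the cylinder definition~\eqref{eq:brored}. Taking $\alpha=(a_i)_{i\in\mathbb{N}}\in\Omega$ with $a_0=a$, if $\mathbf{x}\in\widetilde{\mathcal{R}}(a_{[0,\ell+1)})$ is a limit of points $\pi_c(\mathbf{l}(w_{[0,N)}))$ with $w_{[N,N+\ell+1)}=a_{[0,\ell+1)}$, then $\mathbf{x}+\pi_c(\mathbf{e}_a)$ is a limit of $\pi_c(\mathbf{l}(w_{[0,N+1)}))$ with $w_{[N+1,N+\ell+1)}=a_{[1,\ell+1)}$, hence $\mathbf{x}+\pi_c(\mathbf{e}_a)\in\widetilde{\mathcal{R}}(a_{[1,\ell+1)})$. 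Intersecting over $\ell$ gives $\phi(S\alpha)=\phi(\alpha)+\pi_c(\mathbf{e}_a)$, which is the required intertwining, up to sign convention on $\widetilde{E}$.

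For (ii) I would invoke unique ergodicity on both sides. By Corollary~\ref{cor:tilings} the system $(\mathcal{R}_\mathcal{P},E_\mathcal{P})$ descends to a torus translation on $\mathbb{T}^2$, which is minimal and uniquely ergodic because the translation vector together with $\pi_c(\Lambda_\mathcal{P})$ generates a dense subgroup, thanks to the Pisot property of $\beta$. Via Proposition~\ref{pro:iso} the system $(\widetilde{\mathcal{R}},\widetilde{E})$ inherits these properties with respect to $\lambda/\lambda(\widetilde{\mathcal{R}})$. The pushforward $\phi_\ast m$ is then an $\widetilde{E}$-invariant Borel probability measure on $\widetilde{\mathcal{R}}$ by~(i), and unique ergodicity forces $\phi_\ast m=\lambda/\lambda(\widetilde{\mathcal{R}})$.

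Step (iii) is the hard part. The strong coincidence condition (Proposition~\ref{prop:scc}), combined with the identifications of Lemma~\ref{le:dec} and Proposition~\ref{pro:iso}, already guarantees that the three base subtiles $\widetilde{\mathcal{R}}(2),\widetilde{\mathcal{R}}(3),\widetilde{\mathcal{R}}(4)$ are pairwise $\lambda$-disjoint. Combining with (ii) one obtains $\lambda(\widetilde{\mathcal{R}}(a_{[0,\ell)}))=\lambda(\widetilde{\mathcal{R}})\,m([a_{[0,\ell)}])$ for every admissible cylinder, and since $\sum_{a_{[0,\ell)}}m([a_{[0,\ell)}])=1$, the collection $\{\widetilde{\mathcal{R}}(a_{[0,\ell)})\}$ tiles $\widetilde{\mathcal{R}}$ modulo a $\lambda$-null set, for every $\ell\in\mathbb{N}$. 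The previous lemma showed the diameters of these nested cylinders shrink to zero, so for $\lambda$-almost every $\mathbf{x}\in\widetilde{\mathcal{R}}$ there is a unique nested sequence whose intersection is $\{\mathbf{x}\}$, whence $\phi^{-1}(\mathbf{x})$ is a singleton. The main obstacle is bootstrapping base-level disjointness to each refinement level in a uniform way: without the $\lambda$-partition property at every scale, several sequences in $\Omega$ could collapse to a common point under $\phi$, and it is the measure identity from~(ii) coupled with the base disjointness that rules this out.
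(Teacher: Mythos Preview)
Your steps (i) and (ii) are fine, and in fact (ii) via unique ergodicity of the torus translation is a legitimate alternative to the paper's use of unique ergodicity of $(\Omega,S)$. The problem is in (iii): the bootstrap you describe is circular. From $\phi_\ast m=\lambda/\lambda(\widetilde{\mathcal{R}})$ together with $[a_{[0,\ell)}]\subseteq\phi^{-1}(\widetilde{\mathcal{R}}(a_{[0,\ell)}))$ you only obtain
\[
\lambda(\widetilde{\mathcal{R}}(a_{[0,\ell)}))\ \ge\ \lambda(\widetilde{\mathcal{R}})\,m([a_{[0,\ell)}]),
\]
and summing over $a_{[0,\ell)}$ gives $\sum_{a_{[0,\ell)}}\lambda(\widetilde{\mathcal{R}}(a_{[0,\ell)}))\ge\lambda(\widetilde{\mathcal{R}})$, the same direction you already have from $\bigcup\widetilde{\mathcal{R}}(a_{[0,\ell)})=\widetilde{\mathcal{R}}$. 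To force equality you would need $\sum_{a_{[0,\ell)}}\lambda(\widetilde{\mathcal{R}}(a_{[0,\ell)}))\le\lambda(\widetilde{\mathcal{R}})$, i.e.\ measure disjointness of the level-$\ell$ cylinders, which is exactly what you are trying to prove. So ``the measure identity from~(ii) coupled with the base disjointness'' does \emph{not} rule out collapsing.

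The paper closes this gap with a single dynamical identity that you essentially already derived in (i): from $\widetilde{\mathcal{R}}(a_{[0,\ell+1)})+\pi_c(\mathbf{e}_{a_0})\subseteq\widetilde{\mathcal{R}}(a_{[1,\ell+1)})$ and $\widetilde{\mathcal{R}}(a_{[0,\ell)})\subseteq\widetilde{\mathcal{R}}(a_0)$ one gets
\[
\widetilde{\mathcal{R}}(a_{[0,\ell)})=\bigcap_{j=0}^{\ell-1}\widetilde{E}^{-j}\widetilde{\mathcal{R}}(a_j).
\]
Since $\widetilde{E}$ is a piecewise translation and hence preserves $\lambda$, level-$1$ measure disjointness propagates immediately to every level $\ell$, without any appeal to~(ii). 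Once the $\{\widetilde{\mathcal{R}}(a_{[0,\ell)})\}$ are known to be measure-theoretic partitions, a.e.\ injectivity of $\phi$ off the union of their boundaries follows, and the boundaries have $\lambda$-measure zero by Proposition~\ref{prop:prop}. Replace your last paragraph by this argument and the proof goes through.
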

\begin{proof}
The collections $\mathcal{K}_i=\{\widetilde{\mathcal{R}}(a_{[0,i)}) : a_{[0,i)}\in L_i(w)\}$, where $L_i(w)$ is the set factors of length $i$ of $w$, are measure-theoretic partitions of $\widetilde{\mathcal{R}}$ and $\widetilde{\mathcal{R}}(a_{[0,i)}) = \bigcap_{j=0}^{i-1} \widetilde{E}^{-j}\widetilde{\mathcal{R}}(a_j)$. Hence $\phi(w')\neq\phi(w'')$ for all $w', w''\in \phi^{-1}(\widetilde{\mathcal{R}}\setminus \bigcup_{i\in\mathbb{N}, K\in\mathcal{K}_i} \partial K)$ with $w' \neq w''$, and $m(\phi^{-1}(\bigcup_{i\in\mathbb{N}, K\in\mathcal{K}_i} \partial K)) = 0$ since we have $\lambda(\partial K) = 0$ for all $K\in \mathcal{K}_i$, $i\in\mathbb{N}$, by Proposition \ref{prop:prop}, and $\lambda\circ\phi$ is an $S$-invariant Borel measure, which equals $m$ by unique ergodicity of $(\Omega,S)$. Thus the map $\phi$ is injective almost everywhere. Finally $\phi((a_k)_{k\in\mathbb{N}})$ is a single point $\mathbf{z}=\bigcap_{\ell\in\mathbb{N}}\widetilde{\mathcal{R}}(a_{[0,\ell)})$. 
Since $\widetilde{\mathcal{R}}(a_{[0,\ell+1)})+\pi_c(\mathbf{e}_{a_0})\subset \widetilde{\mathcal{R}}(a_{[1,\ell+1)})$, for all $\ell\in\mathbb{N}$, we obtain that 
$\widetilde{E}(\mathbf{z})=\mathbf{z}+\pi_c(\mathbf{e}_{a_0})=\bigcap_{\ell\in\mathbb{N}}\widetilde{\mathcal{R}}(a_{[1,\ell)})$, but this is the same as shifting $(a_k)_{k\in\mathbb{N}}$ and applying $\phi$. Thus we checked that $\widetilde{E}\circ\phi = \phi\circ S$.
\end{proof}

\begin{theorem}\label{thm:conj}
For each $\sigma_t$ we have the following commutative diagram
\[ \xymatrix{X_{\sigma_t} \ar[r]^\chi\ar[d]_S & \Omega \ar[r]^\phi\ar[d]_S & \widetilde{\mathcal{R}} \ar[r]^{\pi_0} \ar[d]_{\widetilde{E}} & \mathbb{T}^2 \ar[d]_{\widetilde{E}} \\ X_{\sigma_t} \ar[r]^\chi & \Omega\ar[r]^\phi & \widetilde{\mathcal{R}}\ar[r]^{\pi_0}  & \mathbb{T}^2 } \] 
where the maps $\phi$ defined by \eqref{eq:phi} and the natural projection $\pi_0 : \mathbb{K}_c\to \mathbb{K}_c/\pi_c(\Lambda_\mathcal{P})\cong\mathbb{T}^2$ are measure-theoretical isomorphisms, while $\chi$ is the morphism \eqref{chi}. Thus $(X_{\sigma_t},S)$ is a first return of a translation on $\mathbb{T}^2$.
\end{theorem}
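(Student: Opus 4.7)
The plan is to assemble the diagram square by square from results proven earlier, and then read off the first-return conclusion from the chain of identifications.

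The middle square commutes with $\phi$ a measurable isomorphism by Lemma~\ref{pro:conj}. For the right square, apply Corollary~\ref{cor:tilings} to the $3$-touching periodic element $\mathcal{P}=(\mathbf{0},2\wedge 3)+(\mathbf{0},2\wedge 4)+(\mathbf{0},3\wedge 4)$: then $\mathcal{R}_\mathcal{P}+\pi_c(\Lambda_\mathcal{P})$ is a periodic tiling of $\mathbb{K}_c$ with $\Lambda_\mathcal{P}=(\mathbf{e}_2-\mathbf{e}_3)\mathbb{Z}+(\mathbf{e}_2-\mathbf{e}_4)\mathbb{Z}$, so $\mathcal{R}_\mathcal{P}$ is a fundamental domain for $\mathbb{T}^2=\mathbb{K}_c/\pi_c(\Lambda_\mathcal{P})$. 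By Proposition~\ref{pro:iso}, $\widetilde{\mathcal{R}}=-E_\mathcal{P}(\mathcal{R}_\mathcal{P})$ differs from $\mathcal{R}_\mathcal{P}$ by the piecewise translation $E_\mathcal{P}$, hence is also a fundamental domain, so $\pi_0$ is a measure-theoretical isomorphism. Moreover the three vectors $\pi_c(\mathbf{e}_a)$ for $a\in\{2,3,4\}$ are all congruent modulo $\pi_c(\Lambda_\mathcal{P})$, so the local translations $\mathbf{x}\mapsto \mathbf{x}-\pi_c(\mathbf{e}_a)$ defining $\widetilde{E}$ on each subtile $\widetilde{\mathcal{R}}(a)$ project to a single translation on $\mathbb{T}^2$, yielding commutativity of the right square.

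For the leftmost square and the main conclusion, invoke the classical conjugacy $(X_{\sigma_t},S,\mu)\cong(\mathcal{R},E,\lambda)$, which is the standard consequence of primitivity together with the strong coincidence condition for single letters (cf.\ the statement recalled in Section~\ref{sec:hok}); for the family $\sigma_t$ this condition is verified by the same table-of-coincidences argument used in Proposition~\ref{prop:scc}, now specialized to pairs $a,b\in\mathcal{A}$. Proposition~\ref{pro:return} then identifies $E$ as the first return of $\widetilde{E}$ on $\mathcal{R}\subset\widetilde{\mathcal{R}}$. Chaining these identifications with Lemma~\ref{pro:conj} and the right-square isomorphism yields that $(X_{\sigma_t},S)$ is measurably conjugate to the first return of the toral translation $\widetilde{E}$ on the measurable subset $\pi_0(\mathcal{R})\subset\mathbb{T}^2$, which is the assertion of the theorem.

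The main obstacle is that $\chi$ is not length-preserving (since $|\chi(1)|=|\chi(5)|=2$), so the leftmost square does not literally commute as a shift-intertwining: it must be read as expressing the \emph{symbolic} first-return structure. The compatibility is as follows: $\chi$ embeds $X_{\sigma_t}$ into $\Omega$ in such a way that one step of $S$ on the $X_{\sigma_t}$ side corresponds to $|\chi(u_0)|$ steps of $S$ on the $\Omega$ side, and this matches the geometric first-return of $\widetilde{E}$ to $\mathcal{R}$ provided by Proposition~\ref{pro:return}. Making this precise amounts to checking that the two recoded letters $1\mapsto 34$ and $5\mapsto 32$ encode exactly the intermediate visits of $\widetilde{E}$ outside $\mathcal{R}$ between two successive returns to $\mathcal{R}$, which is a direct consequence of the decompositions of the $\mathcal{R}(a\wedge b)$ supplied by Lemma~\ref{le:dec} together with the linear relations~\eqref{eq:ratdep}.
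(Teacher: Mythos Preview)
Your proof is correct and follows essentially the same route as the paper: Lemma~\ref{pro:conj} for the middle square, Corollary~\ref{cor:tilings} combined with Proposition~\ref{pro:iso} for the right square, and the classical conjugacy $(X_{\sigma_t},S)\cong(\mathcal{R},E)$ together with Proposition~\ref{pro:return} for the first-return conclusion. Your explicit remark that the leftmost square does not literally commute because $\chi$ is not length-preserving, and should instead be read as encoding the first-return structure, is a valid clarification of a point the paper's proof leaves implicit; the paper simply asserts the diagram and reads off the conclusion from the same chain of identifications you use.
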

\begin{proof}
Notice that the relations (\ref{eq:ratdep}) hold for the whole family of $\sigma_t$, thus we can use the morphism $\chi$ for each $\sigma_t$. 
By Proposition~\ref{pro:iso}, 
\[ \widetilde{\mathcal{R}} = \bigcup_{a\in\{2,3,4\}}\widetilde{\mathcal{R}}(a) = -E_{\mathcal{P}}(\mathcal{R}_\mathcal{P}),\quad \text{ for } \mathcal{P} = (\mathbf{0},2\wedge 3)+(\mathbf{0},2\wedge 4)+(\mathbf{0},3\wedge 4), \] 
and $(\widetilde{\mathcal{R}},\widetilde{E})=(\mathcal{R},E_\mathcal{P}^{-1})$. Thus, by Corollary~\ref{cor:tilings}, $\widetilde{\mathcal{R}}$ is a fundamental domain of $\mathbb{T}^2$ and $\widetilde{E}$ projects by $\pi_0$ to a translation on $\mathbb{T}^2$. By Lemma \ref{pro:conj} $\phi$ is a measurable conjugation, thus the words in $\Omega$ are natural codings of this toral translation.

Finally, since $(X_{\sigma_t},S)$ is measurably conjugate to $(\mathcal{R},E)$ and $E$ is the first return of $\widetilde{E}$ on $\mathcal{R}$ by Proposition~\ref{pro:return}, we have that $(X_{\sigma_t},S)$ is a first return of the translation $\pi_0(\widetilde{E})$ on $\mathbb{T}^2$.
\end{proof}

\section{Perspectives}\label{sec:persp}

Future works will tackle the following problems.
\subsection{Non-projecting-well substitutions and neutral space}\label{sec:nonreg}

Consider the family of substitutions
\[ \sigma_t : 1\mapsto 1^{t-1}2,\; 2\mapsto 1^{t-1}3,\; 3\mapsto 4,\; 4\mapsto 1,\quad (t\ge 2)\] 
with characteristic polynomial $f(x)g(x)=(x^3-tx^2+x-1)(x+1)$. The rational dependency relation is $\pi(\mathbf{e}_1 + \mathbf{e}_3) = \pi(\mathbf{e}_2 + \mathbf{e}_4)$. Since $n=4$ and $d=3$, we deal with $\mathbf{E}_2^*(\sigma)$ and its geometric realization $\mathbf{E}^2(\sigma)$.

We observe from Figure~\ref{fig:nonreg} that the substitution $\sigma_2$ does not project well. Overlaps of this type can be observed for the whole family.
See also \cite{Furukado:06} for some similar polygonal overlaps obtained in the framework of non-Pisot unimodular matrices.

\begin{figure}[h]
\includegraphics[scale=.2]{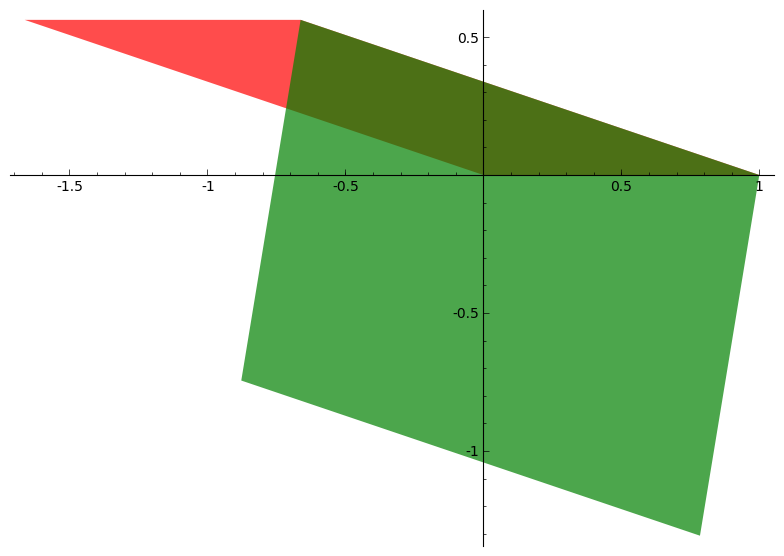} \qquad 
\includegraphics[scale=.45]{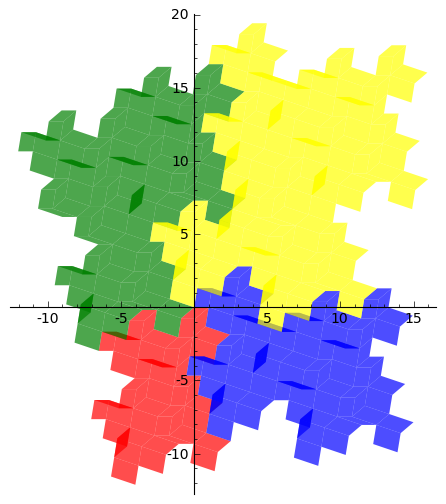} 
\caption{$\pi_c(\ov{\mathbf{E}^2(\sigma)(\mathbf{0},2\wedge 4)}) = \pi_c\ov{(\mathbf{0},1\wedge 3)} \cup \pi_c\ov{(\mathbf{e}_1-\mathbf{e}_4,3\wedge 4)}$ for $\sigma_2$ (left) and $\mathbf{E}^2(\sigma_1)$-iterate of a geometric element and its projection onto $\mathbb{K}_c$ (right).\label{fig:nonreg}}
\end{figure}

This problem could be solved by considering a different projection onto $\mathbb{K}_c$, or by the "retiling" method introduced in \cite{FIR}. This involves an accurate study of positivity and cancellation, together with manipulations, such as flips, on stepped surfaces, in the spirit of \cite{ABFJ,BF11}. 

The main question is whether we can generalize the constructions of this paper to any reducible Pisot substitution, improving the characterization of the stepped surfaces understanding what happens with respect to the neutral space. Some interesting studies on the role of the neutral space in the geometry and dynamics of reducible substitutions have been initiated in \cite{ABB}.

\subsection{Connections with irreducible substitutions}
Figure~\ref{fig:makeirr} suggests that we can change projection and view stepped surfaces considering a smaller number of faces. 
The study of the new combinatorial approach based on morphisms and modified stepped lines of Section~\ref{sec:comb} deserves more investigations. 
It is curious to observe that, using the morphisms $\chi$ defined in \eqref{chi} and $\bar{\chi}$ obtained from $\chi$ by flipping the image of $1$, we get a connection between the Hokkaido substitution $\sigma: 1 \mapsto 21, 2 \mapsto 3, 3 \mapsto 4, 4 \mapsto 5, 5 \mapsto 1$ and the irreducible substitution $\sigma_{\text{irr}}: 2 \mapsto 3, 3 \mapsto 4, 4 \mapsto 23$, having same Pisot polynomial:
\[ \xymatrix{\{1,\ldots,5\}^* \ar[r]^\sigma\ar[d]_{\bar{\chi}} & \{1,\ldots,5\}^* \ar[d]^{\chi}  \\  \{2,3,4\}^* \ar[r]^{\sigma_{\text{irr}}} & \{2,3,4\}^* } \] 
Can we in general connect the study of the dynamics of a reducible substitution to that of an irreducible one?

We saw in Theorem~\ref{thm:conj} that for a family of reducible Pisot substitutions $(X_\sigma,S)$ is the first return of a toral translation. Induced dynamics can have different behavior, thus it is natural to ask in which cases these first returns have pure discrete spectrum (see \cite{Rau84} for connections with \emph{bounded remainder sets}).

\begin{figure}
\begin{center}$
\begin{array}{cc}
\includegraphics[scale=.4]{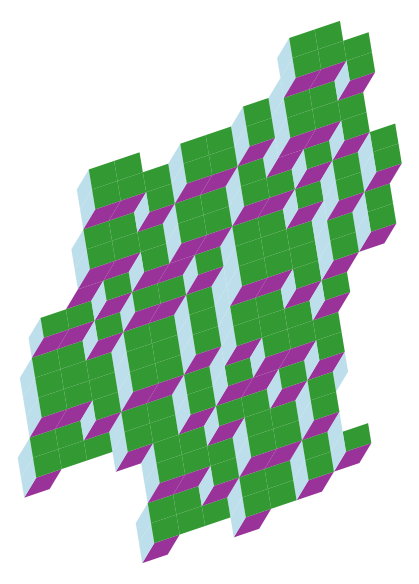} & \qquad
\includegraphics[scale=.4]{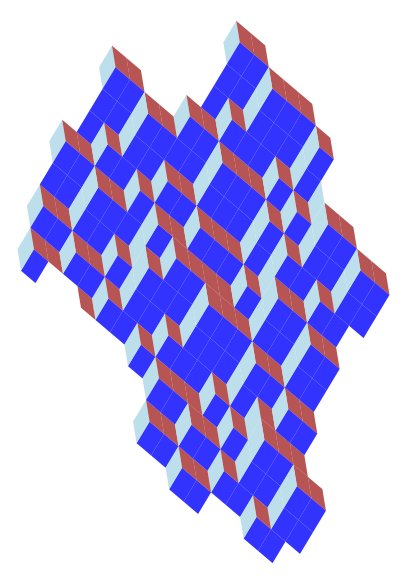}
\end{array}$
\end{center}
\caption{It is interesting to notice that changing suitably the projection we get different polygonal tilings by some faces of three different types.\label{fig:makeirr}}
\end{figure}

\subsection{Contact graphs}
Further research is motivated by the fact that the existence of polygonal approximations for the Rauzy fractals allows to define contact graphs (see \cite{T06,ST09}) even in the reducible case for the study of the fractal boundaries.

\appendix

\section{Proof of Lemma~\ref{lem:meseigenvec}}

\begin{lemma}\label{lem:bigwedge} Let $n=p+m$, $M\in\mathbb{R}^{n\times n}$ and $\{{\bf e}_i\}_{i=1,\ldots,n}$ be the canonical basis of $\mathbb{R}^n$. Moreover, let $\und{a}=a_1\wedge\cdots\wedge a_m\in \mathcal{O}_m$ and ${\bf x}_1,\ldots,{\bf x}_p\in\mathbb{R}^n$. Then 
$$\begin{array}{c}
\det(M{\bf x}_1,\ldots,M{\bf x}_p,{\bf e}_{a_1},\ldots{\bf e}_{a_m})
\\\displaystyle
=\sum_{\und{b}=b_1\wedge\cdots\wedge b_{m}\in\mathcal{O}_m}
\left(\bigwedge_{i=1}^{p}M_\sigma\right)_{\und{a}^*,\und{b}^*}(-1)^{\und{a}+\und{b}}
\det({\bf x}_1,\ldots,{\bf x}_{p},{\bf e}_{b_1},\ldots,{\bf e}_{b_{m}}).
\end{array}
$$
\end{lemma}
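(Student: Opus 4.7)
The plan is to reduce both sides of the claimed identity to determinants of $p\times p$ matrices via a Laplace expansion, and then to identify the resulting expressions through the Cauchy--Binet formula. Multilinearity of the determinant plus Cauchy--Binet is, morally, all that is used; the only real effort is in the sign bookkeeping.

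For the left-hand side I will use Laplace expansion along the last $m$ columns, which are the standard basis vectors $e_{a_1},\ldots,e_{a_m}$. Because these columns are so sparse, the only $m\times m$ minor of the column-$\{p+1,\ldots,n\}$ submatrix that is nonzero is the one formed by rows $\{a_1,\ldots,a_m\}$ (in which case it equals $\det I = 1$). The complementary $p\times p$ minor is $\det\bigl((Mx_j)_{c_i}\bigr)_{i,j=1}^{p}$, where $\und{a}^{*}=c_1\wedge\cdots\wedge c_p$ is the complementary wedge. Laplace expansion gives a global sign $(-1)^{s(I)+s(J)}$ with $s(I)=a_1+\cdots+a_m$ and $s(J)=(p+1)+\cdots+n$; writing $\sigma_0:=(p+1)+\cdots+n$, this reads
\[
\det(Mx_1,\ldots,Mx_p,e_{a_1},\ldots,e_{a_m})=(-1)^{\und{a}+\sigma_0}\,\det\!\bigl(M[\und{a}^{*},\,:]\cdot X\bigr),
\]
where $X$ is the $n\times p$ matrix with columns $x_1,\ldots,x_p$ and $M[\und{a}^{*},\,:]$ denotes the $p\times n$ submatrix consisting of the rows indexed by $\und{a}^{*}$.

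Next I will apply the Cauchy--Binet formula to the $p\times p$ determinant on the right, obtaining
\[
\det\bigl(M[\und{a}^{*},\,:]\cdot X\bigr)=\sum_{S}\det\bigl(M[\und{a}^{*},S]\bigr)\,\det\bigl(X[S,\,:]\bigr),
\]
with $S$ ranging over $p$-subsets of $\{1,\ldots,n\}$. Each such $S$ is uniquely of the form $\und{b}^{*}$ for some $\und{b}\in\mathcal{O}_m$, and by definition of the exterior power one has $\det(M[\und{a}^{*},\und{b}^{*}])=\bigl(\bigwedge_{i=1}^{p}M\bigr)_{\und{a}^{*},\und{b}^{*}}$. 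Running the same Laplace expansion backward on the $x_i$-side yields
\[
\det\bigl(X[\und{b}^{*},\,:]\bigr)=(-1)^{\und{b}+\sigma_0}\,\det(x_1,\ldots,x_p,e_{b_1},\ldots,e_{b_m}).
\]

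It remains only to combine the signs. The two $\sigma_0$ contributions collapse since $(-1)^{2\sigma_0}=1$, leaving precisely the factor $(-1)^{\und{a}+\und{b}}$ that appears in the statement. Thus the main (and essentially only) obstacle is the careful tracking of the Laplace signs on both sides; no analytic input or special property of $\mathcal{O}_m$ beyond its role as an indexing set for $p$-subsets of $\{1,\ldots,n\}$ is needed.
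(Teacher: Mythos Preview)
Your argument is correct. The Laplace expansion along the last $m$ columns immediately isolates the complementary $p\times p$ minor $\det\bigl(M[\und{a}^{*},:]\,X\bigr)$ with the sign $(-1)^{\und{a}+\sigma_0}$, and Cauchy--Binet then splits this into $\sum_{\und{b}}\bigl(\bigwedge^{p}M\bigr)_{\und{a}^{*},\und{b}^{*}}\det\bigl(X[\und{b}^{*},:]\bigr)$; running the same Laplace step on $\det(\mathbf{x}_1,\ldots,\mathbf{x}_p,\mathbf{e}_{b_1},\ldots,\mathbf{e}_{b_m})$ produces the factor $(-1)^{\und{b}+\sigma_0}$, and the two $\sigma_0$'s cancel. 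All the sign bookkeeping checks out.

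The paper's proof reaches the same identity by a more hands-on route: it writes each $\mathbf{x}_k$ in coordinates, expands $\det(M\mathbf{x}_1,\ldots,M\mathbf{x}_p,\mathbf{e}_{a_1},\ldots,\mathbf{e}_{a_m})$ by full multilinearity over the indices $j_1,\ldots,j_p$, identifies each resulting determinant of standard basis vectors with a $p\times p$ minor of $M$ and a sign, and then regroups the sum over $\{j_1,\ldots,j_p\}$ by the complementary index set $\und{b}$. In effect the paper rederives Cauchy--Binet inside the computation, whereas you invoke it as a black box. Your version is shorter and makes the sign tracking more transparent (two symmetric applications of the same Laplace step), at the cost of assuming Cauchy--Binet; the paper's version is self-contained but has to carry permutations and signatures explicitly through several lines.
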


\begin{proof}
We write ${\bf x}_k=\sum_{j=1}^n{\bf x}_k^{(j)}{\bf e}_j$ for $k\in\{1,\ldots,p\}$, $M=(m_{ij})_{1\leq i,j\leq n}$ and compute the determinant:
$$\begin{array}{c}
\det(M{\bf x}_1,\ldots,M{\bf x}_p,{\bf e}_{a_1},\ldots{\bf e}_{a_m})\\
\displaystyle
=
\det\left(\sum_{i_1=1}^n\sum_{j_1=1}^nm_{i_1j_1}{\bf x}_1^{(j_1)}{\bf e}_{i_1},\ldots,
\sum_{i_p=1}^n\sum_{j_p=1}^nm_{i_pj_p}{\bf x}_p^{(j_p)}{\bf e}_{i_p},
{\bf e}_{a_1},\ldots{\bf e}_{a_m}\right)
\\\displaystyle
=\sum_{1\leq j_1,\ldots,j_p\leq n}{\bf x}_1^{(j_1)}\cdots{\bf x}_p^{(j_p)}
\det\left(\sum_{i_1=1}^nm_{i_1j_1}{\bf e}_{i_1},\ldots,\sum_{i_p=1}^nm_{i_pj_p}{\bf e}_{i_p},{\bf e}_{a_1},\ldots{\bf e}_{a_m}\right)\\
\displaystyle=\sum_{\begin{array}{c}1\leq j_1,\ldots,j_p\leq n,\\\#\{j_1,\ldots, j_p\}=p\end{array}}{\bf x}_1^{(j_1)}\cdots{\bf x}_p^{(j_p)}
\det\left(\sum_{i_1=1}^nm_{i_1j_1}{\bf e}_{i_1},\ldots,\sum_{i_p=1}^nm_{i_pj_p}{\bf e}_{i_p},{\bf e}_{a_1},\ldots{\bf e}_{a_m}\right).
\end{array}
$$
Remember that $\left(\bigwedge_{i=1}^{p}M\right)_{\und{a}^*,\und{b}^*}$ is the $p\times p$ minor of $M$ obtained by deleting the rows $a_1,\ldots,a_m$ and the columns $b_1,\ldots,b_m$ of $M$, where $\und{a}=a_1\wedge\cdots\wedge a_m$ and $\und{b}=b_1\wedge\cdots\wedge b_m$. Therefore, by definition, 
$$
\begin{array}{c}
\displaystyle
\det\left(\sum_{i_1=1}^nm_{i_1j_1}{\bf e}_{i_1},\ldots,\sum_{i_p=1}^nm_{i_pj_p}{\bf e}_{i_p},{\bf e}_{a_1},\ldots{\bf e}_{a_m}\right)\\
\displaystyle=(-1)^{\und{a}+((p+1)+\cdots+n)} \left(\bigwedge_{i=1}^{p}M\right)_{\und{a}^*,\und{j}'^*} {\rm sgn}(\tau).
\end{array}
$$
Here, $\und{j}'=j_1'\wedge\cdots\wedge j_m'\in\mathcal{O}_m$ such that $\{j_1,\ldots,j_p\}\cup\{j_1',\ldots,j_m'\}=\{1,\ldots,n\}$, $\tau$ is the permutation of $\{1,\ldots,n\}$ satisfying $\tau(j_1')=j_1',\ldots,\tau(j_m')=j_m'$ and $\tau(j_1)\leq \cdots \leq\tau(j_p)$ and ${\rm sgn}(\tau)$ its signature. We denote the set of permutations on $\{1,\ldots,n\}$ by ${\rm Per}_n$. In this way, we obtain
$$
\begin{array}{l}\displaystyle
\det(M{\bf x}_1,\ldots,M{\bf x}_p,{\bf e}_{a_1},\ldots{\bf e}_{a_m})
\\\displaystyle
= \sum_{\begin{array}{c}1\leq j_1,\ldots,j_p\leq n,\\\#\{j_1,\ldots, j_p\}=p\end{array}}\left(\bigwedge_{i=1}^{p}M\right)_{\und{a}^*,\und{j}'^*}(-1)^{\und{a}+((p+1)+\cdots+n)}
 {\rm sgn}(\tau){\bf x}_1^{(j_1)}\cdots{\bf x}_p^{(j_p)}\\
=\displaystyle\sum_{\und{j}=j_1\wedge\ldots\wedge j_p\in\mathcal{O}_p}\left(\bigwedge_{i=1}^{p}M\right)_{\und{a}^*,\und{j}'^*}(-1)^{\und{a}+((p+1)+\cdots+n)}
\sum_{\begin{array}{c}\tau\in {\rm Per}_n,\\\tau(j_k')=j_k'\\(1\leq k\leq m)\end{array}}{\rm sgn}(\tau){\bf x}_1^{(\tau(j_1))}\cdots{\bf x}_p^{(\tau(j_p))}.
 \end{array}
$$
Note that 
$$\begin{array}{rcl}\displaystyle\sum_{\begin{array}{c}\tau\in {\rm Per}_n,\\\tau(j_k')=j_k'\\(1\leq k\leq m)\end{array}}{\rm sgn}(\tau){\bf x}_1^{(\tau(j_1))}\cdots{\bf x}_p^{(\tau(j_p))}
=(-1)^{\und{j}'+((p+1)+\cdots+n)}\det({\bf x}_1,\ldots,{\bf x}_p,{\bf e}_{j_1'},\ldots,{\bf e}_{j_m'}),
\end{array}
$$
which leads to the desired equality, after renaming $\und{j}'\leftrightarrow \und{b}$:
\[ 
\begin{array}{l}
\det(M{\bf x}_1,\ldots,M{\bf x}_p,{\bf e}_{a_1},\ldots{\bf e}_{a_m})
\\\displaystyle
\displaystyle=\sum_{\und{b}=b_1\wedge\ldots\wedge b_m\in\mathcal{O}_m}\left(\bigwedge_{i=1}^{p}M\right)_{\und{a}^*,\und{b}^*}(-1)^{\und{a}+\und{b}}
\det({\bf x}_1,\ldots,{\bf x}_p,{\bf e}_{b_1},\ldots,{\bf e}_{b_m}).
\end{array} \qedhere
\]
\end{proof}

\begin{proof}[Proof of Lemma~~\ref{lem:meseigenvec}]
Let ${\bf u}_{d+1},\ldots,\mathbf{u}_{n}$ be a basis of $\mathbb{K}_n$ made of eigenvectors of $M_\sigma$ for the associated eigenvalues $\zeta_{d+1},\ldots,\zeta_n$. Also, let ${\bf v}_{d+1},\ldots,\mathbf{v}_{n}$ be eigenvectors of ${}^tM_\sigma$ for the associated eigenvalues $\zeta_{d+1},\ldots,\zeta_n$, normalized to have $\mathbf{u}_k\cdot\mathbf{v}_k=1$ for $k\in\{d+1,\ldots,n\}$. Since $\mathbf{v}_1,\mathbf{v}_{d+1},\ldots,\mathbf{v}_n$ are all orthogonal to the contracting space $\mathbb{K}_c$, the following equality holds for every $\und{a}=a_1\wedge\cdots\wedge a_{d-1}\in\mathcal{O}_{d-1}$:
\begin{equation}\label{voldet}\displaystyle\lambda_{d-1}(\pi_c\ov{(\mathbf{0},\und{a})})=
\frac{\left|\det\left(\mathbf{v}_1,\mathbf{v}_{d+1},\ldots,\mathbf{v}_{n},\pi_c({\bf e}_{a_1}),\ldots,\pi_c({\bf e}_{a_{d-1}})\right)\right|}{\lambda_{\bar n}\left(\mathbf{v}_1,\mathbf{v}_{d+1},\ldots,\mathbf{v}_{n}\right)}.
\end{equation}
Here, $\lambda_p$ denotes the Lebesgue measure on the $p$-dimensional space and $$\lambda_{\bar n}\left(\mathbf{v}_1,\mathbf{v}_{d+1},\ldots,\mathbf{v}_{n}\right)=:1/K_0$$ is the measure of the parallelotope generated by the vectors $\mathbf{v}_1,\mathbf{v}_{d+1},\ldots,\mathbf{v}_{n}$. 

Note that for all ${\bf x}\in \mathbb{R}^n$, we have the decomposition
\begin{equation}\label{decompRn}{\bf x}=\langle{\bf x},{\bf v}_1\rangle{\bf u}_1+\sum_{i=d+1}^n\langle{\bf x},{\bf v}_i\rangle{\bf u}_i+\pi_c({\bf x}).
\end{equation}
We use the above decomposition for the vectors $\mathbf{v}_1,\mathbf{v}_{d+1},\ldots,\mathbf{v}_{\bar n}$ and expand the  determinant  by multilinearity to obtain:
$$\begin{array}{rcl}
\displaystyle\lambda_{d-1}(\pi_c\ov{(\mathbf{0},\und{a})})&=&\displaystyle \underbrace{K_0\det\left(\langle{\bf v}_k,{\bf v}_l\rangle_{k,l=1,d+1,\ldots,n}\right)}_{=:K}\\
&&\cdot \displaystyle\;\left|\det({\bf u}_1,{\bf u}_{d+1},\ldots,{\bf u}_{n},\pi_c({\bf e}_{a_1}),\ldots,\pi_c({\bf e}_{a_{d-1}}))\right|.
\end{array}
$$
The terms containing $\pi_c({\bf v}_k)$  vanished for each value of $k\in\{1,d+1,\ldots,n\}$), since the $d$ vectors $\pi_c({\bf v}_k),\pi_c({\bf e}_{a_1}),\ldots,\pi_c({\bf e}_{a_{d-1}})$ are linearly dependent in the $d-1$-dimensional space $\mathbb{K}_c$. 
Using now~(\ref{decompRn}) for the vectors ${\bf e}_{a_1},\ldots,{\bf e}_{a_{d-1}}$, we can simplify the last expression to
\begin{equation}\label{eq:mesdet}
\displaystyle\lambda_{d-1}(\pi_c\ov{(\mathbf{0},\und{a})})=\displaystyle K
\left|\det({\bf u}_1,{\bf u}_{d+1},\ldots,{\bf u}_{n},{\bf e}_{a_1},\ldots,{\bf e}_{a_{d-1}})\right|.
\end{equation}

Now, by (N), $g(0)=1=\Pi_{k=d+1}^n\zeta_k$ and therefore
$$\begin{array}{rcl}
\det({\bf u}_1,{\bf u}_{d+1},\ldots,{\bf u}_{n},{\bf e}_{a_1},\ldots,{\bf e}_{a_{d-1}})&=&\displaystyle \frac{1}{\beta}\det(\beta{\bf u}_1,\zeta_{d+1}{\bf u}_{d+1},\ldots,\zeta_n{\bf u}_{n},{\bf e}_{a_1},\ldots,{\bf e}_{a_{d-1}})\\\\
&=&\displaystyle \frac{1}{\beta}\det(M_\sigma {\bf u}_1,M_\sigma{\bf u}_{d+1},\ldots,M_\sigma{\bf u}_{n},{\bf e}_{a_1},\ldots,{\bf e}_{a_{d-1}}).
\end{array}
$$

By Lemma~\ref{lem:bigwedge}, we can relate the last determinant to $\bigwedge_{i=1}^{\bar n}M_\sigma$: for all 
$\und{a}=a_1\wedge\cdots\wedge a_{d-1}\in\mathcal{O}_{d-1}$, we have
$$
\begin{array}{l}
\det(M_\sigma {\bf u}_1,M_\sigma{\bf u}_{d+1},\ldots,M_\sigma{\bf u}_{n},{\bf e}_{a_1},\ldots,{\bf e}_{a_{d-1}})
\\\displaystyle
=\sum_{\und{b}=b_1\wedge\cdots\wedge b_{d-1}\in\mathcal{O}_{d-1}}
\left(\bigwedge_{i=1}^{\bar n}M_\sigma\right)_{\und{a}^*,\und{b}^*}(-1)^{\und{a}+\und{b}}
\det({\bf u}_1,{\bf u}_{d+1},\ldots,{\bf u}_{n},{\bf e}_{b_1},\ldots,{\bf e}_{b_{d-1}})\\
\displaystyle=\sum_{\und{b}\in\mathcal{O}_{d-1}}
\left(\bigwedge_{i=1}^{\bar n}{}^tM_\sigma\right)_{\und{b}^*,\und{a}^*}(-1)^{\und{a}^*+\und{b}^*}
\det({\bf u}_1,{\bf u}_{d+1},\ldots,{\bf u}_{n},{\bf e}_{b_1},\ldots,{\bf e}_{b_{d-1}})\\
\displaystyle=\sum_{\und{b}\in\mathcal{O}_{d-1}}(M_{d-1})_{\und{b},\und{a}}\det({\bf u}_1,{\bf u}_{d+1},\ldots,{\bf u}_{n},{\bf e}_{b_1},\ldots,{\bf e}_{b_{d-1}}).
\end{array}
$$
We used here that $(-1)^{\und{a}+\und{b}}=(-1)^{\und{a}^*+\und{b}^*}$. By the above computation, this means that the vector 
$${\bf V}=\left(\det({\bf u}_1,{\bf u}_{d+1},\ldots,{\bf u}_{n},{\bf e}_{a_1},\ldots,{\bf e}_{a_{d-1}})\right)_{\und{a}\in\mathcal{O}_{d-1}}
$$ is an eigenvector of ${}^tM_{d-1}$ for the eigenvalue $\beta$. It follows that
$$\beta |\mathbf{V}|=|{}^tM_{d-1}\mathbf{V}|\leq |{}^tM_{d-1}||\mathbf{V}|.
$$
Now, it follows from~(\ref{eq:MkRel}) and (P) that $|{}^tM_{d-1}|=\bigwedge_{i=1}^{\bar n}M_\sigma$ is a primitive matrix. Therefore, by Lemma~\ref{le:modone}, $\beta$ is its Perron-Frobenius eigenvalue. Consequently, the inequality $\beta |\mathbf{V}|\leq |{}^tM_{d-1}||\mathbf{V}|$ is an equality and 
$$\left(\lambda_{d-1}(\pi_c\ov{(\mathbf{0},\und{a})})\right)_{\und{a}\in\mathcal{O}_{d-1}}= K|{\bf V}|
$$
is an eigenvector of $|{}^tM_{d-1}|$ for the eigenvalue $\beta$.
\end{proof}

\section{Proof of Lemma~\ref{le:dec}}\label{app:B}

The classical Rauzy fractal subtiles $\mc{R}(a)$ can be described via Dumont-Thomas numeration (see \cite{DT89}) as
\begin{equation}\label{eq:num1} \mc{R}(a) =  \Big\{\sum_{i\ge 0} \pi_c (M_\sigma^i\,\mathbf{l}(p_i)): (p_i)_{i\ge 0} \in \mathcal{G}_p(a)\Big\}, \end{equation}
where $\mathcal{G}_p(a)$ denotes the set of labels of infinite paths in the prefix graph of the substitution ending at $a\in\mathcal{A}$ (for more details see e.g.~\cite{CS,BS}). We can follow as well infinite paths in the suffix graph. If we do this we get instead
\begin{equation}\label{eq:num2}  -\mc{R}(a)-\pi_c(\mathbf{e}_a) =  \Big\{\sum_{i\ge 0} \pi_c (M_\sigma^i\,\mathbf{l}(s_i)): (s_i)_{i\ge 0} \in \mathcal{G}_s(a)\Big\}, \end{equation}
where $\mathcal{G}_s(a)$ denotes the set of labels of infinite paths in the suffix graph of the substitution ending at $a\in\mathcal{A}$ (see \cite[Section 5]{CS}).

We can use Dumont-Thomas numeration to describe also the subtiles $\mathcal{R}(a\wedge b)$ using the {\it $\mathbf{E}^2(\sigma)$-suffix graph} that is defined as follows.

\begin{definition}\label{def:sufgraph}
The {\it $\mathbf{E}^2(\sigma)$-suffix graph} has set of vertices $\{\und{a}^*:\und{a}\in\wedge^3\mathcal{A}\}$, and there is an edge $\und{a}^* \xrightarrow{\mathbf{s}}\und{b^*}$ if and only if $\sigma(\und{a}) = \mathbf{p}\und{b}\mathbf{s}$, or equivalently if and only if $(M_\sigma^{-1}\mathbf{l}(\mathbf{s}),\und{a}^*) \in \mathbf{E}^2(\sigma)(\mathbf{0},\und{b}^*)$.
\end{definition}

In Figure~\ref{fig:graph} the $\mathbf{E}^2(\sigma_t)$-suffix graph is depicted. Observe that this graph with reversed edges describes the images of every face by $\mathbf{E}^2(\sigma_t)$.

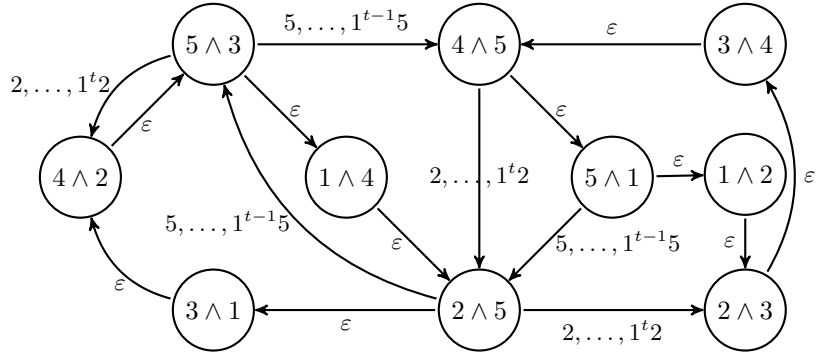
\begin{figure}
\begin{tikzpicture}[<-,>=stealth',shorten >=1pt,auto,node distance=2.5cm,
                    thick,main node/.style={circle,draw,font=\sffamily\bfseries}]

  \node[main node] (1) {$5\wedge 3$};
  \node[main node] (2) [below left of=1] {$4\wedge 2$};
  \node[main node] (3) [below right of=2] {$3\wedge 1$};
  \node[main node] (4) [below right of=1] {$1\wedge 4$};
  \node[main node] (5) [below right of= 4] {$2\wedge 5$};
  \node[main node] (6) [above right of= 4] {$4\wedge 5$};
  \node[main node] (7) [below right of=6] {$5\wedge 1$};
  \node[main node] (10) [below right of=7] {$2\wedge 3$};
  \node[main node] (9) [ above = 0.7cm of 10] {$1\wedge 2$};
  \node[main node] (8) [above right of=7] {$3\wedge 4$};

  \path[every node/.style={font=\sffamily\small}]
    (1) edge node [below] {$\varepsilon$} (2)    
        edge [bend right] node [left] {$5,\ldots,1^{t-1}5$} (5)    
    (2) edge [bend left] node [left] {$2,\ldots,1^t2$} (1)
        edge [bend right] node[below] {$\varepsilon$} (3)
    (3) edge node [below] {$\varepsilon$} (5)
    (4) edge node [right] {$\varepsilon$} (1)
    (5) edge node [left] {$\varepsilon$} (4)
        edge node [anchor=mid] {$2,\ldots, 1^t2$} (6)
        edge node [right] {$5,\ldots,1^{t-1}5$} (7)
    (6) edge node [above] {$5,\ldots,1^{t-1}5$} (1)
        edge node [above] {$\varepsilon$} (8)
    (7) edge node [right] {$\varepsilon$} (6)
    (8) edge [bend left] node [right] {$\varepsilon$} (10) 
    (9) edge node [above] {$\varepsilon$} (7)
    (10) edge node [below] {$2,\ldots,1^t2$} (5)
         edge node [left] {$\varepsilon$} (9);
\end{tikzpicture}
\caption{The $\mathbf{E}^2(\sigma_t)$-suffix graph. 
Note that an edge labeled by $5,\ldots,1^{t-1}5$ denotes that there exist $t$ edges, one labeled by $5$, one by $15$, and so on, until $1^{t-1}5$ (if $t=0$ then there is no edge of this type). The same is valid for the edges labeled by $2,\ldots,1^{t}2$  (for $t=0$ there is only one edge labeled by $2$).
\label{fig:graph}}
\end{figure}
 
\begin{proposition}
We have
\[ \mc{R}(a\wedge b) = \left\{ \sum_{i\ge 0}\pi_c(M_\sigma^i \mathbf{l}(\mathbf{s}_i)) :  (\mathbf{s}_i)_{i\ge 0} \in \mathcal{G}_{s}(a\wedge b)\right\} \]
where $\mathcal{G}_s(a\wedge b)$ denotes the set of labels of infinite walks in the $\mathbf{E}^2(\sigma)$-suffix graph ending at state $a\wedge b$.
\end{proposition}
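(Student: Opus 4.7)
The strategy is to iterate the set equation~\eqref{eq:seteq} of Proposition~\ref{prop:seteq} and pass to the limit, following the classical Dumont--Thomas pattern leading to~\eqref{eq:num2}. The first step is to translate the set equation into the graph language of Definition~\ref{def:sufgraph}: an edge $\und{a}^*\xrightarrow{\mathbf{s}}\und{b}^*$ corresponds precisely to a face $(M_\sigma^{-1}\mathbf{l}(\mathbf{s}),\und{a}^*)$ appearing in $\mathbf{E}^2(\sigma)(\mathbf{0},\und{b}^*)$. Substituting this into~\eqref{eq:seteq} with $\mathbf{x}=\mathbf{0}$ and using that $M_\sigma\circ\pi_c=\pi_c\circ M_\sigma$ yields
\[ \mathcal{R}(\und{b}^*)=\bigcup_{\und{a}^*\xrightarrow{\mathbf{s}}\und{b}^*}\bigl(M_\sigma\,\mathcal{R}(\und{a}^*)+\pi_c(\mathbf{l}(\mathbf{s}))\bigr). \]
Iterating $k$ times from $\und{a_0}^*=a\wedge b$, and using that positivity~(P) excludes cancellation at each step, one obtains
\[ \mathcal{R}(a\wedge b)=\bigcup\Bigl(M_\sigma^k\,\mathcal{R}(\und{a_k}^*)+\sum_{i=0}^{k-1}\pi_c(M_\sigma^i\mathbf{l}(\mathbf{s}_i))\Bigr), \]
the union ranging over all length-$k$ walks $\und{a_k}^*\xrightarrow{\mathbf{s}_{k-1}}\cdots\xrightarrow{\mathbf{s}_0}a\wedge b$ ending at $a\wedge b$.

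Next, I would pass to the limit $k\to\infty$. By the Pisot assumption, $M_\sigma$ restricted to $\mathbb{K}_c$ is a uniform contraction, and every $\mathcal{R}(\und{a}^*)$ is compact (Proposition~\ref{prop:prop}), so $\mathrm{diam}(M_\sigma^k\,\mathcal{R}(\und{a_k}^*))\to 0$ uniformly in the choice of walk. Since the labels $\mathbf{l}(\mathbf{s}_i)$ take only finitely many values, the series $\sum_{i\ge 0}\pi_c(M_\sigma^i\mathbf{l}(\mathbf{s}_i))$ converges absolutely for every infinite walk in $\mathcal{G}_s(a\wedge b)$. For the inclusion $\supseteq$, any truncation of an infinite walk to length $k$ places the partial sum within $\mathrm{diam}(M_\sigma^k\,\mathcal{R}(\und{a_k}^*))$ of a point of $\mathcal{R}(a\wedge b)$; letting $k\to\infty$ and using closedness of the compact set $\mathcal{R}(a\wedge b)$ gives the full limit sum in $\mathcal{R}(a\wedge b)$.

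The reverse inclusion is where the main technicality lies. Given $\mathbf{x}\in\mathcal{R}(a\wedge b)$, the $k$-fold iteration produces, for every $k$, a length-$k$ walk $w^{(k)}$ ending at $a\wedge b$ with labels $(\mathbf{s}_i^{(k)})_{0\le i<k}$ and a point $\mathbf{y}_k\in\mathcal{R}(\und{a_k^{(k)}}^*)$ satisfying $\mathbf{x}=M_\sigma^k\mathbf{y}_k+\sum_{i=0}^{k-1}\pi_c(M_\sigma^i\mathbf{l}(\mathbf{s}_i^{(k)}))$. The obstacle is to extract from the family $(w^{(k)})_k$ a single infinite walk $(\mathbf{s}_i)_{i\ge 0}\in\mathcal{G}_s(a\wedge b)$ representing $\mathbf{x}$. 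This is handled by a standard K\"onig-type diagonal argument: the graph has finite branching, so some edge $\und{a_1}^*\xrightarrow{\mathbf{s}_0}a\wedge b$ is chosen as the last edge by infinitely many $w^{(k)}$; restricting to that subsequence and iterating produces a coherent infinite walk, and the bound $\|M_\sigma^k\mathbf{y}_k\|\to 0$ then forces $\mathbf{x}=\sum_{i\ge 0}\pi_c(M_\sigma^i\mathbf{l}(\mathbf{s}_i))$, completing the proof.
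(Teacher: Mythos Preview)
Your proposal is correct and follows exactly the approach the paper has in mind: the paper's proof is a single sentence (``This is a direct consequence of Proposition~\ref{prop:seteq} and of the definition of $\mathbf{E}^2(\sigma)$''), and you have simply written out the standard Dumont--Thomas argument that this sentence is invoking. Your translation of the set equation into the edge relation of Definition~\ref{def:sufgraph}, the iteration, and the K\"onig-type compactness argument for the two inclusions are all the expected details.
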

\begin{proof}
This is a direct consequence of Proposition~\ref{prop:seteq} and of the definition of $\mathbf{E}^2(\sigma)$.
\end{proof}

By abuse of notation we will write $0$ instead of $\epsilon$ by reading labels of walks in the suffix or $\mathbf{E}^2(\sigma_0)$-suffix graphs.

We will relate the elements $\sum_{i\ge 0} \pi_c(M_\sigma^i \mathbf{l}(\mathbf{s}_i))$ with $(\mathbf{s}_i)_{i\ge 0}\in \mathcal{G}_{s}(a\wedge b)$ with those $\sum_{i\ge 0} \pi_c(M_\sigma^i \mathbf{l}(\mathbf{s}_i))$ for $(\mathbf{s}_i)_{i\ge 0} \in \mathcal{G}_s(a)$.

For the Hokkaido substitution $\sigma_0$ we have
\[ \mathcal{G}_s(a) = \{ (\mathbf{s}_i)_{i\geq 0} = \cdots 0^5 2^{k_2} 0^5 2^{k_1} 0^{a-1} : 0\leq k_i \leq \infty\}. \]

\begin{proof}[Proof of Lemma~~\ref{le:dec}]
Observe that 
\begin{equation}\label{eq:rel}
\pi_e(M_\sigma^3\mathbf{e}_2) = \pi_e(M_\sigma\mathbf{e}_2) + \pi_e(\mathbf{e}_2) \quad\text{i.e.}\quad \delta(2000.) = \delta(0022.).
\end{equation}
where $\delta(w) = \sum_{i=0}^{|w|} \pi_e(M_\sigma^i \mathbf{l}(w_i))$. Notice that we can extend $\delta$ to infinite strings $(s_i)_{i\geq 0}$. 
We will prove using \eqref{eq:rel} that $\delta(\mc{G}_s(2\wedge 3))=\delta(\mathcal{G}_s(1)\cup\mathcal{G}_s(4))$. For this reason we will write $w =_\delta w'$ if $\delta(w)=\delta(w')$. The cycle
\begin{center}
\begin{tikzpicture}[->,>=stealth',shorten >=1pt,auto,node distance=2cm]

  \node[] (1) {$2\wedge 3$};
  \node[] (2) [right of=1] {$2\wedge 5$};
  \node[] (3) [right of=2] {$2\wedge 4$};
  \node[] (4) [right of=3] {$3\wedge 5$};
  \node[] (5) [right of=4] {$2\wedge 5$};
  \node[] (6) [right of=5] {$2\wedge 3$};
  \path[every node/.style={font=\sffamily\small}]
    (5) edge node[above] {$2$} (6)
    (4) edge node[above] {$00$} (5)
    		edge [loop above] node[right] {$(20)^k$} (4)
    (3) edge node[above] {$0$} (4)
    (2) edge node[above] {$00$} (3)
    (1) edge node[above] {$002$} (2);
\end{tikzpicture}
\end{center}
in the graph of Figure~\ref{fig:graph} produces strings of type $0020^3(20)^k002=_\delta 0^5 2^{2k+2}$. Starting from state $2\wedge 5$ we get strings $0^5 2^{2k+1}$. Walking from the first node $2\wedge 5$ to the second $2\wedge 5$ returns $0^5 2^{2k}$ and extending this walk to the left starting from $2\wedge 3$ we obtain $0^5 2^{2k+3}$. 
Walking in Figure~\ref{fig:graph} from $2\wedge 3$ to $2\wedge 5$ we get the word $0022002=_\delta 2 0^5 2$. Thus, $\delta(\mc{G}_s(2\wedge 3)) = \delta(\cdots 0^52^{k_2}0^52^{k_1})$, with $0\leq k_i\leq \infty$, i.e. $\delta(\mathcal{G}_s(1))\subseteq \delta(\mc{G}_s(2\wedge 3))$. Strings ending with $0^3$ are obtained following the loop $2\wedge 3 \stackrel{00}{\rightarrow}4\wedge 5\stackrel{2}{\rightarrow}2\wedge 5\stackrel{2}{\rightarrow}2\wedge 3$. Since these are all possible non-trivial paths ending at $2\wedge 3$ we have proven that $\delta(\mc{G}_s(2\wedge 3))=\delta(\mathcal{G}_s(1)\cup\mathcal{G}_s(4))$ which implies $\mathcal{R}(2\wedge 3) = (-\mathcal{R}(1)-\pi_c(\mathbf{e}_1)) \cup (-\mathcal{R}(4)-\pi_c(\mathbf{e}_4))$ by (\ref{eq:num2}).

Since $2\wedge 3$ goes to $3\wedge 4$ by reading a $0$ we deduce immediately that all the strings ending at $3\wedge 4$ are equivalent under $\delta$ to those in $\mathcal{G}_s(2)\cup \mathcal{G}_s(5)$. Hence by (\ref{eq:num2}) we get $\mathcal{R}(3\wedge 4) = (-\mathcal{R}(2)-\pi_c(\mathbf{e}_2)) \cup (-\mathcal{R}(5)-\pi_c(\mathbf{e}_5))$.

Starting from $2\wedge 5$ and going to $2\wedge 4$ passing by $1\wedge 3$ we read $00$ and by the above reasonings we get then all possible strings $\cdots 2^{k_2}0^52^{k_1}00$ belonging to $\mathcal{G}_s(3)$. From $2\wedge 5 \stackrel{00}{\rightarrow} 2\wedge 4\stackrel{0}{\rightarrow}3\wedge 5\stackrel{2}{\rightarrow}2\wedge 4$ we get all expansions in $\mathcal{G}_s(5)+2$, where with the latter we mean the set of $(s_i)_{i\geq 0}\in \mathcal{G}_s(5)$ such that $s_0=2$. Walking $k$ times through the loop $2\wedge 4\rightarrow 3\wedge 5 \rightarrow 2\wedge 4$ and extending to the left with $2\wedge 5$ we get strings $0^2 (02)^k$. Subtracting $v_4 = \delta(200.)$ we get $0^2(02)^{k-2}0002 =_\delta 0^5 2^{2k-3}$. Walking through the loop $2\wedge 4 \rightarrow 3\wedge 5 \rightarrow  2\wedge 5 \rightarrow 2\wedge 4$ and then once into $2\wedge 4\rightarrow 3\wedge 5 \rightarrow 2\wedge 4$ we read the string $020^5 =_\delta 0^42^3$ and, after subtracting $v_4$, we get $0^5 2^2$. Repeating this loop we get arbitrary large strings ending with an even number of $2$s. Thus we have shown we get strings in $\mathcal{G}_s(1)+4$.

So by (\ref{eq:num2}) we just proved that $\mathcal{R}(2\wedge 4)$ is made of the domains $\delta(\mathcal{G}_s(3)) = -\mathcal{R}(3)-\pi_c(\mathbf{e}_3)$, $\delta(\mathcal{G}_s(5)+2) = -\mathcal{R}(5)-\pi_c(\mathbf{e}_5)+\pi_c(\mathbf{e}_2) = -\mathcal{R}(5)-\pi_c(\mathbf{e}_3)$ and $\delta(\mathcal{G}_s(1)+4) = -\mathcal{R}(1)-\pi_c(\mathbf{e}_1) + \pi_c(\mathbf{e}_4) = -\mathcal{R}(1)-\pi_c(\mathbf{e}_3)$, since $\pi_c(\mathbf{e}_1)=\pi_c(\mathbf{e}_3)+\pi_c(\mathbf{e}_4)$ and $\pi_c(\mathbf{e}_5)=\pi_c(\mathbf{e}_2)+\pi_c(\mathbf{e}_3)$.
\end{proof}

\subsection*{Acknowledgements}
The authors are grateful to J\"org Thuswaldner for the precious help and many inspiring discussions which contributed to the creation of this paper, to Val\'erie Berth\'e for reading carefully a preliminary version, and to Pierre Arnoux for giving remarkable comments. We also thank warmly the referees: their suggestions increased  considerably the quality and readability of the paper.

\bibliographystyle{amsalpha}
\bibliography{family}

\newcommand{\etalchar}[1]{$^{#1}$}
\providecommand{\bysame}{\leavevmode\hbox to3em{\hrulefill}\thinspace}
\providecommand{\MR}{\relax\ifhmode\unskip\space\fi MR }
\providecommand{\MRhref}[2]{%
  \href{http://www.ams.org/mathscinet-getitem?mr=#1}{#2}
}
\providecommand{\href}[2]{#2}
\begin{thebibliography}{ABB{\etalchar{+}}15}

\bibitem[ABB11]{ABB}
P.~Arnoux, J.~Bernat, and X.~Bressaud, \emph{Geometrical models for
  substitutions}, Exp. Math. \textbf{20} (2011), no.~1, 97--127.

\bibitem[ABB{\etalchar{+}}15]{ABBLS}
S.~Akiyama, M.~Barge, V.~Berth{\'e}, J.-Y. Lee, and A.~Siegel, \emph{On the
  {P}isot substitution conjecture}, Mathematics of aperiodic order, Progr.
  Math., vol. 309, Birkh\"auser/Springer, Basel, 2015, pp.~33--72.

\bibitem[ABFJ07]{ABFJ}
P.~Arnoux, V.~Berth{\'e}, T.~Fernique, and D.~Jamet, \emph{Functional stepped
  surfaces, flips, and generalized substitutions}, Theoret. Comput. Sci.
  \textbf{380} (2007), no.~3, 251--265.

\bibitem[AFHI11]{AFHI}
P.~Arnoux, M.~Furukado, E.~Harriss, and S.~Ito, \emph{Algebraic numbers, free
  group automorphisms and substitutions on the plane}, Trans. Amer. Math. Soc.
  \textbf{363} (2011), no.~9, 4651--4699.

\bibitem[AI01]{AI}
P.~Arnoux and S.~Ito, \emph{Pisot substitutions and {R}auzy fractals}, Bull.
  Belg. Math. Soc. Simon Stevin \textbf{8} (2001), no.~2, 181--207,
  Journ{\'e}es Montoises d'Informatique Th{\'e}orique (Marne-la-Vall{\'e}e,
  2000).

\bibitem[Bar15]{Barge0000}
M.~Barge, \emph{The {P}isot conjecture for beta-substitutions},
  \href{http://arxiv.org/abs/1505.04408}{arXiv:1505.04408}, preprint, 2015.

\bibitem[Bar16]{Bar}
\bysame, \emph{Pure discrete spectrum for a class of one-dimensional
  substitution tiling systems}, Discrete Contin. Dyn. Syst. \textbf{36} (2016),
  no.~3, 1159--1173. \MR{3431249}

\bibitem[BBJS12]{BBJS:12}
M.~Barge, H.~Bruin, L.~Jones, and L.~Sadun, \emph{Homological {P}isot
  substitutions and exact regularity}, Israel J. Math. \textbf{188} (2012),
  281--300.

\bibitem[BBJS16]{BBJS:15}
V.~Berth{\'e}, J.~Bourdon, T.~Jolivet, and A.~Siegel, \emph{A combinatorial
  approach to products of {P}isot substitutions}, Ergodic Theory Dynam. Systems
  \textbf{36} (2016), no.~6, 1757--1794.

\bibitem[BBK06]{BBK}
V.~Baker, M.~Barge, and J.~Kwapisz, \emph{Geometric realization and coincidence
  for reducible non-unimodular {P}isot tiling spaces with an application to
  {$\beta$}-shifts}, Ann. Inst. Fourier (Grenoble) \textbf{56} (2006), no.~7,
  2213--2248, Num{\'e}ration, pavages, substitutions.

\bibitem[BD14]{Berthe-Delecroix}
V.~Berth{\'e} and V.~Delecroix, \emph{Beyond substitutive dynamical systems:
  {$S$}-adic expansions}, Numeration and substitution 2012, RIMS K\^oky\^uroku
  Bessatsu, B46, Res. Inst. Math. Sci. (RIMS), Kyoto, 2014, pp.~81--123.

\bibitem[BF11]{BF11}
V.~Berth{\'e} and T.~Fernique, \emph{Brun expansions of stepped surfaces},
  Discrete Math. \textbf{311} (2011), no.~7, 521--543.

\bibitem[BS05]{BS}
V.~Berth{\'e} and A.~Siegel, \emph{Tilings associated with beta-numeration and
  substitutions}, Integers \textbf{5} (2005), no.~3, A2, 46.

\bibitem[BST10]{BST10}
V.~Berth{\'e}, A.~Siegel, and J.~Thuswaldner, \emph{Substitutions, {R}auzy
  fractals and tilings}, Combinatorics, automata and number theory,
  Encyclopedia Math. Appl., vol. 135, Cambridge Univ. Press, Cambridge, 2010,
  pp.~248--323.

\bibitem[CS01]{CS}
V.~Canterini and A.~Siegel, \emph{Geometric representation of substitutions of
  {P}isot type}, Trans. Amer. Math. Soc. \textbf{353} (2001), no.~12,
  5121--5144.

\bibitem[CS03]{CS03}
A.~Clark and L.~Sadun, \emph{When size matters: subshifts and their related
  tiling spaces}, Ergodic Theory Dynam. Systems \textbf{23} (2003), no.~4,
  1043--1057.

\bibitem[DT89]{DT89}
J.-M. Dumont and A.~Thomas, \emph{Systemes de numeration et fonctions fractales
  relatifs aux substitutions}, Theoret. Comput. Sci. \textbf{65} (1989), no.~2,
  153--169.

\bibitem[EEFI07]{EEFI}
F.~Enomoto, H.~Ei, M.~Furukado, and S.~Ito, \emph{Tilings of a {R}iemann
  surface and cubic {P}isot numbers}, Hiroshima Math. J. \textbf{37} (2007),
  no.~2, 181--210. \MR{2345367}

\bibitem[EI05]{EI}
H.~Ei and S.~Ito, \emph{Tilings from some non-irreducible, {P}isot
  substitutions}, Discrete Math. Theor. Comput. Sci. \textbf{7} (2005), no.~1,
  81--121 (electronic).

\bibitem[EIR06]{EIR}
H.~Ei, S.~Ito, and H.~Rao, \emph{Atomic surfaces, tilings and coincidences.
  {II}. {R}educible case}, Ann. Inst. Fourier (Grenoble) \textbf{56} (2006),
  no.~7, 2285--2313, Num{\'e}ration, pavages, substitutions.

\bibitem[Fer06]{F06}
T.~Fernique, \emph{Multidimensional {S}turmian sequences and generalized
  substitutions}, Internat. J. Found. Comput. Sci. \textbf{17} (2006), no.~3,
  575--599.

\bibitem[FIR06]{FIR}
M.~Furukado, S.~Ito, and E.~A. Robinson, Jr., \emph{Tilings associated with
  non-{P}isot matrices}, Ann. Inst. Fourier (Grenoble) \textbf{56} (2006),
  no.~7, 2391--2435, Num{\'e}ration, pavages, substitutions.

\bibitem[Fra08]{Frank08}
N.~P. Frank, \emph{A primer of substitution tilings of the {E}uclidean plane},
  Expo. Math. \textbf{26} (2008), no.~4, 295--326.

\bibitem[Fre05]{Fret}
D.~Frettl{\"o}h, \emph{Duality of model sets generated by substitutions}, Rev.
  Roumaine Math. Pures Appl. \textbf{50} (2005), no.~5-6, 619--639.

\bibitem[FS92]{FS92}
C.~Frougny and B.~Solomyak, \emph{Finite beta-expansions}, Ergodic Theory
  Dynam. Systems \textbf{12} (1992), no.~4, 713--723.

\bibitem[Fur06]{Furukado:06}
M.~Furukado, \emph{Tilings from non-{P}isot unimodular matrices}, Hiroshima
  Math. J. \textbf{36} (2006), no.~2, 289--329.

\bibitem[Hat02]{Hat}
A.~Hatcher, \emph{Algebraic topology}, Cambridge University Press, Cambridge,
  2002.

\bibitem[IO93]{IO:93}
S.~Ito and M.~Ohtsuki, \emph{Modified {J}acobi-{P}erron algorithm and
  generating {M}arkov partitions for special hyperbolic toral automorphisms},
  Tokyo J. Math. \textbf{16} (1993), no.~2, 441--472.

\bibitem[IO94]{IO:94}
\bysame, \emph{Parallelogram tilings and {J}acobi-{P}erron algorithm}, Tokyo J.
  Math. \textbf{17} (1994), no.~1, 33--58.

\bibitem[IR06]{IR06}
S.~Ito and H.~Rao, \emph{Atomic surfaces, tilings and coincidence. {I}.
  {I}rreducible case}, Israel J. Math. \textbf{153} (2006), 129--155.

\bibitem[LW03]{LW}
J.~C. Lagarias and Y.~Wang, \emph{Substitution {D}elone sets}, Discrete Comput.
  Geom. \textbf{29} (2003), no.~2, 175--209.

\bibitem[MT14]{MT14}
M.~Minervino and J.~M. Thuswaldner, \emph{The geometry of non-unit {P}isot
  substitutions}, Ann. Inst. Fourier (Grenoble) \textbf{64} (2014), no.~4,
  1373--1417.

\bibitem[Que10]{Queffelec:10}
M.~Queff{\'e}lec, \emph{Substitution dynamical systems---spectral analysis},
  second ed., Lecture Notes in Mathematics, vol. 1294, Springer-Verlag, Berlin,
  2010.

\bibitem[Rau82]{Rau82}
G.~Rauzy, \emph{Nombres alg\'ebriques et substitutions}, Bull. Soc. Math.
  France \textbf{110} (1982), no.~2, 147--178.

\bibitem[Rau84]{Rau84}
\bysame, \emph{Ensembles \`a restes born\'es}, Seminar on number theory,
  1983--1984 ({T}alence, 1983/1984), Univ. Bordeaux I, Talence, 1984, pp.~Exp.
  No. 24, 12.

\bibitem[Rev91]{Rev}
J.-P. Reveill{\`e}s, \emph{G{\'e}om{\'e}trie discr{\`e}te, calcul en nombres
  entiers et algorithmique}, Th{\`e}se de Doctorat, Universit{\'e} Louis
  Pasteur, Strasbourg, 1991.

\bibitem[RWY14]{RWY14}
H.~Rao, Z.~Wen, and Y.~Yang, \emph{Dual systems of algebraic iterated function
  systems}, Adv. Math. \textbf{253} (2014), 63--85.

\bibitem[SAI01]{SAI}
Y.~Sano, P.~Arnoux, and S.~Ito, \emph{Higher dimensional extensions of
  substitutions and their dual maps}, J. Anal. Math. \textbf{83} (2001),
  183--206.

\bibitem[Sol05]{S05}
B.~Solomyak, \emph{Pseudo-self-affine tilings in {$\mathbb{R}^d$}}, Zap.
  Nauchn. Sem. S.-Peterburg. Otdel. Mat. Inst. Steklov. (POMI) \textbf{326}
  (2005), no.~Teor. Predst. Din. Sist. Komb. i Algoritm. Metody. 13, 198--213,
  282--283.

\bibitem[ST09]{ST09}
A.~Siegel and J.M. Thuswaldner, \emph{Topological properties of {R}auzy
  fractals}, M\'em. Soc. Math. Fr. (N.S.) (2009), no.~118, 140.

\bibitem[Thu06]{T06}
J.~M. Thuswaldner, \emph{Unimodular {P}isot substitutions and their associated
  tiles}, J. Th\'eor. Nombres Bordeaux \textbf{18} (2006), no.~2, 487--536.

\end{thebibliography}

\end{document}